\theoremstyle{plain}
\newtheorem{theorem}{Theorem}
\newtheorem{proposition}{Proposition}
\newtheorem{lemma}{Lemma}
\newtheorem{corollary}{Corollary}
\theoremstyle{remark}
\newtheorem{remark}{Remark}
\newtheorem{definition}{Definition}
\newcommand{\Cov}[0]{\mathsf{Cov}}
\newcommand{\Var}[0]{\mathsf{Var}}
\newcommand{\iid}[0]{\overset{iid}{\sim}}
\newcommand{\ind}[0]{\mathds{1}}
\newcommand{\calB}[0]{\mathcal{B}}
\newcommand{\calC}[0]{\mathcal{C}}
\newcommand{\calS}[0]{\mathcal{S}}
\newcommand{\supp}{\mathsf{supp}}
\newcommand{\E}[0]{\mathbb{E}}
\newcommand{\R}[0]{\mathbb{R}}
\newcommand{\Prob}[0]{\mathbb{P}}
\renewcommand{\tilde}{\widetilde}
\renewcommand{\hat}{\widehat}
\newcommand{\vertiii}[1]{{\left\vert\kern-0.25ex\left\vert\kern-0.25ex\left\vert #1 
    \right\vert\kern-0.25ex\right\vert\kern-0.25ex\right\vert}}
\newcommand\numberthis{\addtocounter{equation}{1}\tag{\theequation}}
\newcommand{\cA}{\mathcal{A}}
\newcommand{\cB}{\mathcal{B}}
\newcommand{\cC}{\mathcal{C}}
\newcommand{\cF}{\mathcal{F}}
\newcommand{\cH}{\mathcal{H}}
\newcommand{\cK}{\mathcal{K}}
\newcommand{\cN}{\mathcal{N}}
\newcommand{\cP}{\mathcal{P}}
\newcommand{\cS}{\mathcal{S}}
\newcommand{\cT}{\mathcal{T}}
\newcommand{\cU}{\mathcal{U}}
\newcommand{\cX}{\mathcal{X}}
\newcommand{\EE}{\mathbb{E}}
\newcommand{\GG}{\mathbb{G}}
\newcommand{\NN}{\mathbb{N}}
\newcommand{\PP}{\mathbb{P}}
\newcommand{\RR}{\mathbb{R}}
\newcommand{\ZZ}{\mathbb{Z}}
\newcommand*{\dd}{\, \mathrm{d}}
\newcommand{\vasti}{\bBigg@{3.5 }}
\newcommand{\vast}{\bBigg@{4}}
\newcommand{\Vast}{\bBigg@{5}}
\newcommand{\Vastt}{\bBigg@{7}}
\newcommand{\be}{\begin{equation}}
\newcommand{\ee}{\end{equation}}
\newcommand{\ba}{\begin{align}}
\newcommand{\ea}{\end{align}}
\newcommand{\baa}{\begin{align*}}
\newcommand{\eaa}{\end{align*}}
\newcommand{\argmin}{\mathop{\mathrm{argmin}}}
\newcommand*{\gauss}{\varphi_\sigma}
\newcommand*{\Gauss}{\mathcal{N}_\sigma}
\newcommand{\wass}{\mathsf{W}_1}
\newcommand{\law}{\mathsf{Law}}
\newcommand{\Wp}{\mathsf{W}_p}
\newcommand{\GWp}{\mathsf{W}_p^{\mspace{1mu}\sigma}}
\newcommand{\lip}[1]{\big\|#1\big\|_{\mathsf{Lip}_{1,0}}}
\newcommand{\ip}[1]{\big<#1\big>}
\newcommand{\infgwass}{\inf_{\theta\in\Theta}\mathsf{W}_1^{\mspace{1mu}\sigma}(P,Q_\theta)}
\newcommand{\infgwassn}{\inf_{\theta\in\Theta}\mathsf{W}_1^{\mspace{1mu}\sigma}(P_n,Q_\theta)}
\newcommand{\gwass}{\mathsf{W}_1^{\mspace{1mu}\sigma}}
\newcommand{\gwasssmall}{\mathsf{W}_1^{\mspace{1mu}\sigma_n}}
\newcommand{\gwassemp}{\mathsf{W}_1^{\mspace{1mu}\sigma}(P_n,P)}
\newcommand{\gwassempsmall}{\mathsf{W}_1^{\mspace{1mu}\sigma_n}(P_n,P)}
\newcommand{\convp}{\overset{\PP}{\to}}
\DeclareMathOperator{\Card}{Card}
\DeclareMathOperator{\diam}{\mathsf{diam}}
\begin{document}

\title[Limit Distribution Theory for the Smooth 1-Wasserstein Distance]{Limit Distribution Theory for the Smooth 1-Wasserstein Distance with Applications}

\author[Sadhu]{Ritwik Sadhu}
\address[Ritwik Sadhu]{Department of Statistics and Data Science, Cornell University.}
\email{rs2526@cornell.edu}

\author[Goldfeld]{Ziv Goldfeld}
\address[Ziv Goldfeld]{Department of Electrical and Computer Engineering, Cornell Univeristy.}
\email{goldfeld@cornell.edu}

\author[Kato]{Kengo Kato}
\address[Kengo Kato]{Department of Statistics and Data Science, Cornell University.}
\email{kk976@cornell.edu}



\begin{abstract}%
The smooth 1-Wasserstein distance (SWD) $\gwass$ was recently proposed as a means to mitigate the curse of dimensionality in empirical approximation, while preserving the Wasserstein structure. Indeed, SWD exhibits parametric convergence rates in any data dimension and inherits the metric and topological structure of the classic 1-Wasserstein distance. To facilitate principled learning and inference with the SWD, 
this work conducts a thorough statistical study thereof, encompassing a high-dimensional limit distribution theory for empirical $\gwass$, bootstrap consistency, concentration inequalities, and Berry-Esseen type bounds. 
We provide sharp characterization of the dependence of convergence rates on the smoothing parameter $\sigma$ and account for regimes when it decays with $n$ at a sufficiently slow rate. 
As applications of the limit distribution theory, we study two-sample testing and implicit generative modeling (when both the data distribution and the model are sampled) via $\gwass$ minimum expected distance estimation (MEDE). 
We establish asymptotic validity of homogeneity testing based on $\gwass$, while for MEDE,
we prove measurability, almost sure convergence, and limit distributions for optimal estimators and their corresponding $\gwass$ error. Our~results suggest that the SWD is well suited for high-dimensional statistical learning and inference.

\smallskip

\noindent \textbf{Keywords.} intrinsic dimension, limit theorems, multivariate homogeneity testing, optimal transport, smooth Wasserstein distance, strong approximation.
\end{abstract}

\maketitle


\section{Introduction}

Optimal transport (OT) is a versatile framework for quantifying discrepancy between probability distributions. It has a longstanding history in mathematics, dating back to the Monge in the 18th century \citep{mongeOT1781}. Recently, OT has emerged as a tool of choice for a myriad of machine learning tasks, encompassing clustering \citep{ho2017multilevel}, computer vision \citep{rubner2000earth,sandler2011nonnegative,li2013novel}, generative modeling \citep{arjovsky2017wasserstein,gulrajani2017improved}, natural language processing \citep{alvarez2018gromov,yurochkin2019hierarchical,grave2019unsupervised}, and model ensembling \citep{dognin2019wasserstein}. This wide adoption was driven by recent computational advances \citep{cuturi2013sinkhorn, solomon2015convolutional, genevay2016stochastic} and the rich geometric structure with which OT-based discrepancy measures, like Wasserstein distances, endow the space of probability measures.
However, Wasserstein distances suffer from the so-called curse of dimensionality (CoD) when distributions are estimated from samples, with empirical convergence rates of $n^{-1/d}$, whenever the dimension is $d>3$. This slow rate renders performance guarantees in terms of $\wass$ all but vacuous when $d$ is large. It is a also a major roadblock towards a more delicate statistical analysis concerning limit distributions, bootstrapping, and second order rates, all of which are central for valid statistical inference.


Recently, smoothing the underlying distribution via convolution with a Gaussian kernel was proposed as a means for mitigating the CoD while preserving the Wasserstein structure. Specifically, for a smoothing parameter $\sigma>0$ and $p\geq 1$, consider $\GWp(P,Q):=\Wp(P\ast\Gauss,Q\ast\Gauss)$, where $\Gauss=\cN(0,\sigma^2\mathrm{I}_d)$ is the $d$-dimensional isotropic Gaussian measure of parameter $\sigma$, $P\ast\Gauss$ is the convolution of $P$ and $\Gauss$, and $\Wp$ is the regular $p$-Wasserstein distance. In contrast to the CoD rates, \citet{Goldfeld2020convergence} showed that several popular statistical divergences, including $\wass$ and $\mathsf{W}_2$, enjoy fast empirical convergence of $n^{-1/2}$ once distributions are smoothed by $\Gauss$. The followup work \citet{Goldfeld2020GOT} focused on $\gwass$ and showed that the smooth distance inherits the metric and topological structure of its classic counterpart. That work also established regularity properties of $\gwass$ in $\sigma$, and quantified the smoothing as $\big|\wass(P,Q)-\gwass(P,Q)\big|=O(\sigma\sqrt{d})$. The structural properties and parametric empirical convergence rates were extended to general $p\geq 1$ in \cite{nietert2021smooth}. 
Other follow-up works explored relations between $\GWp$ and maximum mean discrepancies \citep{zhang2021convergence}, analyzed its rate of decay as $\sigma\to \infty$ \citep{chen2021asymptotics}, considered it in the context of sequential estimation of divergences \cite{manole2021sequential}, and adopted it as a performance metric for nonparametric mixture model estimation \citep{han2021nonparametric}, demonstrating accelerated (polynomial) MLE convergence rates.
Motivated by the above, herein we conduct an in-depth statistical study of the Gaussian smoothed Wasserstein distance (SWD), exploring high dimensional limit distributions of the empirical distance under the null and the alternative, bootstrapping, Berry-Esseen type bounds, vanishing $\sigma$ analysis, as well as several applications.


\subsection{Contributions}\label{SUBSEC:contributions}

This work extends our earlier conference paper \citep{Goldfeld2020limit_wass}. That paper derived the first limit distribution result for the empirical SWD that holds in arbitrary dimensions, and applied it to study minimum distance estimation (MDE) under $\gwass$. Alongside the fast empirical convergence of $\gwass$ in expectation \citep{Goldfeld2020convergence,Goldfeld2020GOT}, this limit distribution result further demonstrated the enhanced statistical properties of $\gwass$. However, the analysis in \citet{Goldfeld2020limit_wass} was limited to the one-sample setting under the null, not accounting for statistical questions concerning two-sample approximation, speed of convergence to the distributional limit, sharpness of expectation bounds in $\sigma$, adaptation to the intrinsic dimensionality of the data, and MDE when only samples from the model are available. These aspects are key for valid statistical inference, and serve to motivate the present work.


Our first main result herein derives the limit distribution of the smooth empirical distance $\sqrt{n}\gwass(P_n,P)$ in any dimension $d$, and characterizes sharp dependence of the expected empirical convergence rate on $\sigma$
(Theorem \ref{thm: CLT for W1}). 
Here, $P_n:=n^{-1}\sum_{i=1}^n\delta_{X_i}$ is the empirical measure based on $n$ independently and identically distributed (i.i.d.) samples, $X_1,\ldots,X_n$, from $P$. The analysis relies on the Kantorovich-Rubinstein (KR) duality \citep{villani2008optimal}, which allows representing $\sqrt{n}\gwass(P_n,P)$ as a supremum of an empirical process indexed by the class of 1-Lipschitz functions convolved with a Gaussian density. We prove that this function class is $P$-Donsker (i.e., satisfies the uniform central limit theorem (CLT)) under a polynomial moment condition on $P$. By the continuous mapping theorem, we conclude that $\sqrt{n}\gwass(P_n,P)$ converges in distribution to the supremum of a tight Gaussian process. We then extend this result to the two-sample case $\sqrt{mn/(m+n)}\gwass(P_n,Q_m)$, providing distributional limits under both the null ($P=Q$) and the alternative ($P \neq Q$). To enable approximation of the distributional limits, we prove consistency of the nonparametric bootstrap (Corollary \ref{cor: bootstrap}). Our high-dimensional SWD limit distribution theory contrasts the classic $\wass$ case, for which similar results are known only when $d=1$ \citep{del1999central}. 
Indeed, while 1-Lipschitz functions are generally not Donsker for $d>1$, smoothing them via the Gaussian convolution shrinks the class and enables controlling its complexity by uniformly bounding higher order derivatives. As a consequence, empirical SWD is amenable to analysis via empirical process theory\footnote{The reader is referred to, e.g., \citet{LeTa1991,VaWe1996,GiNi2016} as useful references on modern empirical process theory.}, using which we derive our results.


Given the limit distribution theory, it is natural to ask how accurate the distributional approximations are. To that end we explore a Berry-Esseen type analysis of empirical SWD, as well as its empirical bootstrap analogue (Theorems \ref{thm: SWD small sigma Berry-Esseen type rate} and \ref{thm: Empirical Bootstrap Berry-Esseen type rate} in Section \ref{sec: Berry-Esseen type results}). Specifically, we bound the absolute difference between $\sqrt{n}\gwassemp$ and its distributional limit in probability, and as a corollary, obtain a bound on the Prokhorov distance between their distributions. These results also hold when the smoothing parameter $\sigma_n$ vanishes with $n$ at a slow enough rate, an analysis motivated by by noise annealing techniques used in machine learning. These results provide quantitative bounds on the speed of convergence of $\sqrt{n}\gwassemp$ (or its bootstrap approximation) towards its asymptotic distribution. 
Our results also imply that the 
expected empirical SWD satisfies $\E\big[\gwass (P_{n},P)\big] \lesssim \sigma^{-d/2+1} n^{-1/2}\log n$. 
This is reminiscent of \citet[Proposition 1]{Goldfeld2020convergence}, but with a sharper dependence on $\sigma$ in terms of $d$. In Section \ref{sec: low dim case}, we further refine this bound to show that $d$ can be replaced with an appropriate notion of \textit{intrinsic} dimension of the population distribution. 
These results, together with the stability of $\gwass$ with respect to (w.r.t.)~$\sigma$ \citep[Lemma 1]{Goldfeld2020GOT}, enable leveraging SWD to analyze classic $\wass$ and derive convergence rates thereof depending on intrinsic dimension, as described next.

We consider three applications of the derived theory: (i) dependence of the classic empirical $\wass$ on the intrinsic dimension of the support; (ii) two-sample homogeneity testing; and (iii) generative modeling via minimum expected distance estimation (MEDE) with the SWD. For the first problem, we derive a bound on $\EE\big[\wass(P_n,P)\big]$ that adapts to the (smooth) Wasserstein intrinsic dimension of $P$ (see \citet{weed2019sharp}). Our result matches known sharp convergences rates from \citet{weed2019sharp}, while removing the compact support assumption previous works imposed  \citep{dudley1969speed,boissard2014,weed2019sharp}; similar results for unbounded supports but under a different notion of intrinsic dimensionality are available in \citet{singh2018minimax,lei2020}. 
For SWD two-sample homogeneity testing, we calibrate critical values via the bootstrap and prove consistency of the resulting test. Lastly, we consider MEDE as a formulation of implicit generative modeling when only samples are available from both the data distribution and the model (as is often the case in practice, e.g., generative adversarial networks (GANS) \cite{nowozin2016f,arjovsky2017wasserstein,gulrajani2017improved}). We establish measurability and strong consistency of the estimator $\hat{\theta}_{m,n}\in\argmin_{\theta\in\Theta}\EE\big[\gwass(P_n,Q_{\theta,m})\big|X_1\ldots,X_n\big]$, along with almost sure convergence of the associated minimal distance. Next, we characterize the high-dimensional limits of MEDE solutions and their $\gwass$ error, 
establishing $n^{-1/2}$ convergence rates for both quantities in arbitrary dimensions. Similar MEDE limit distribution results for $\wass$ are only known for $d=1$, bottlenecked by the lack of high-dimensional limit distribution theory for the classic distance \citep{bassetti2006minimum,bernton2019parameter} (see also \citet{belili1999estimate,bassetti2006asymptotic}). 
Our results pose $\gwass$ as a potent tool for high-dimensional learning and inference, and demonstrate its utility for analyzing classic Wasserstein distances.


\subsection{Related Works}
The problems of empirical approximation, limit distributions, and MDE studied here for $\gwass$ have received considerable attention under the classic $\wass$ framework. Since $\wass$ metrizes weak convergence and convergence of first moments in $\cP_1(\RR^d)$, Varadarajan's theorem (see, for example, Theorem 11.4.1 in \citet{Du2002}) yields $\wass(P_n,P)\to0$ as $n\to\infty$ almost surely (a.s.) for any $P \in \cP_1(\R^d)$. The convergence rate of the expected value, first studied by Dudley \citep{dudley1969speed}, is well understood with sharp rates known in all dimensions.\footnote{Except $d=2$, where a log factor is possibly missing.} In particular, $\E\big[\wass(P_n,P)\big]\lesssim n^{-1/d}$ provided that $P$ has sufficiently many moments \citep{FournierGuillin2015}; see also \citet{bolley2007,boissard2011,dereich2013,boissard2014,lei2020,chizat2020faster,manole2021sharp} for extensions of this result to other orders of the Wasserstein distance and general Polish spaces. 
Notably, this rate suffers from the CoD. 
Imposing structural assumptions on the distributions may accelerate convergence rates \citep{boissard2014,weed2019sharp,niles2019estimation,hutter2021minimax,weed2019minimax,manole2021plugin,deb2021rates}, but such assumptions are hard to verify in practice.

Despite the comprehensive account of expected $\wass(P_n,P)$, limiting distribution results for a scaled version thereof are scarce. In fact, a limit distribution characterization analogous to that presented herein for the SWD (i.e., assuming only finiteness of moments) is known for the classic distance only when $d=1$. Specifically, Theorem 2 in \citet{GineZinn1986} yields that $\mathsf{Lip}_{1}(\R)$ is a $P$-Donsker class if (and only if) $\sum_{j} P\big([-j,j]^{c}\big)^{1/2}<\infty$.
Combining with KR duality and the continuous mapping theorem, we have $\sqrt{n} \wass(P_{n},P) \stackrel{w}{\to} \|G_{P}\|_{\mathsf{Lip}_{1}(\R)}
$ for some tight Gaussian process $G_{P}$ in $\ell^{\infty}(\mathsf{Lip}_{1}(\R))$. An alternative derivation of the one-dimensional limit distribution is given in \citet{del1999central} based on the fact that $\wass$ equals the $L^1$ distance between cumulative distribution functions (CDFs) when $d=1$. However, these arguments do not generalize to higher dimensions. When $d \ge 2$, the function class $\mathsf{Lip}_{1}(\R^{d})$ is not Donsker; if it was, then $\EE[\wass (P_n,P)]$ would be of order $O(n^{-1/2})$, contradicting known lower bounds \citep{FournierGuillin2015}. 

For $p$-Wasserstein distances with general $p \geq 1$, the limit distribution of $\sqrt{n} \mathsf{W}_{p}^p(P_n,P)$, when $P$ is supported on a finite or a countable set, was derived in \citet{Sommerfeld2018} and \citet{Tameling2019}, respectively. Their proof relies on the directional functional delta method \citep{Dumbgen1993}. Also related is \citet{del2019central}, where asymptotic normality of $\sqrt{n}\big(\mathsf{W}_2^2(P_n,Q)-\EE\big[\mathsf{W}_2^2(P_n,Q)\big]\big)$ in an arbitrary dimension is derived under the alternative $P\ne Q$; see also \cite{del2021central} for the extension to general $p > 1$. Notably, the centering term is the expected empirical distance (and not the population one), which is undesirable since such results do not give confidence intervals for $\mathsf{W}_2(P,Q)$.  The recent preprint \citet{manole2021plugin} addressed this gap and established a CLT for $\sqrt{n}\big(\mathsf{W}_2^2(\hat{P}_n,Q)-\mathsf{W}_2^2(P,Q)\big)$ under the alternative $P \ne Q$, but for a wavelet-based estimator $\hat{P}_n$ of $P$ (as opposed to the empirical distribution), while assuming that the ambient space is $[0,1]^d$ and imposing smoothness and boundedness assumptions on the Lebesgue densities of $P,Q$. 
The limit distribution of the empirical $2$-Wasserstein distance under the null $Q=P$ is known only when $d=1$ \citep{delbarrio2005}. The key observation there is that when $d=1$, the empirical $2$-Wasserstein distance coincides with the $L^2$ distance between the empirical and population quantile functions, resulting in a non-Gaussian limit. When $d=1$, a CLT for the empirical $p$-Wasserstein distance with the population centering was derived in \cite{del2019central1D} for $p \ge 2$. Their argument relies on the quantile expression of the Wasserstein distance in $d=1$ and some techincal results from \cite{delbarrio2005}.
Another related work is \citet{Goldfeld2020limit_fdiv}, where limit distribution results for smooth empirical total variation distance and $\chi^2$ divergence were derived based on the CLT in Banach spaces. None of the techniques employed in these works are applicable in our case, which requires a different analysis as described in~Section~\ref{sec: limit}.

Two approaches to alleviate the CoD besides smoothing are slicing \citep{rabin2011wasserstein} and entropic regularization \citep{cuturi2013sinkhorn}. Empirical convergence under sliced distances follows their rates when $d=1$ \citet{nadjahi2020statistical}, which amounts to $n^{-1/2}$ for sliced $\wass$ \citep{niles2019estimation,lin2021projection}. A sliced 1-Wasserstein MDE and MEDE analyses, covering questions similar to those considered here, was provided in \citet{nadjahi2019asymptotic}. The authors proved a limit distribution~result for empirical sliced $\wass$, but their assumptions included a Donsker-type theorem that makes the conclusion immediate. Entropic regularization of OT also speeds up empirical convergence rates in some cases. Specifically, the rate of two-sample empirical convergence under entropic OT (EOT) is $n^{-1/2}$ for smooth costs (thus excluding entropic $\wass$) with compactly supported distributions \citep{genevay2019sample}, or quadratic cost with sub-Gaussian distributions \citep{mena2019statistical}.

A CLT for empirical EOT under quadratic cost was also derived in \citet{mena2019statistical} (see also \citet{bigot2019central,klatt2020empirical}). This result is similar to that of \citet{BarrioLoubes2019} for the classic $\mathsf{W}_2$ as it is centered by the expected empirical EOT, and not the population one. 
While both sliced Wasserstein distances and EOT can be efficiently computed (sliced distances amount to simple one-dimensional formulas while EOT employs Sinkhorn iterations \citep{cuturi2013sinkhorn,altschuler2017near}), sliced distances offer a poor proxy of classic ones \citep[Lemma 5.1.4]{bonnotte2013unidimensional}, while EOT forfeits the Wasserstein metric and topological structure \citep{feydy2018interpolating,bigot2019central}. On the other hand, $\gwass$ preserves the Wasserstein structure and is within an additive $2\sigma\sqrt{d}$ gap from $\wass$ \citep[Lemma 1]{Goldfeld2020GOT}. Computational aspects of the SWD, however, are still premature and call for further exploration (see Section \ref{SEC:summary}).


\section{Background and Preliminaries}

\subsection{Notation}
Let $\| \cdot \|$ denote the Euclidean norm, and $x \cdot y$, for $x,y \in \R^{d}$, designate the inner product. The diameter of a set $X\subset \RR^d$ is $\mathsf{diam}(X):=\sup_{x,y\in X} \|x-y\|$. In a metric space $(S,d)$, $B(x,\epsilon)$ denotes the open ball of radius $\epsilon$ around $x$: if $x$ is the origin in $\R^d$ it is omitted. For a subset $A$ of a metric space, we denote its $\epsilon$-blowup as $A^\epsilon := \{x \in S: d(x,A) \leq \epsilon\}$.  We write $a\lesssim_{x} b$ when $a\leq C_x b$ for a constant $C_x$ that depends only on $x$ ($a\lesssim b$ means the hidden constant is absolute), and $a_n \ll b_n$ for $\lim_{n\to\infty} a_n/b_n \to 0$. We denote by $a \vee b$ and $a\wedge b$ the maximum and minimum of $a$ and $b$, respectively.

We denote by $(\Omega,\mathcal{A},\PP)$ the probability space on which all random variables are defined. The class of Borel probability measures on $\RR^d$ is $\cP(\RR^d)$. The subset of measures with finite first moment is denoted by $\cP_1(\RR^d)$, i.e., $P\in\cP_1(\RR^d)$ whenever $P\|x\|<\infty$. The convolution of $P,Q\in\cP(\RR^d)$ is $P\ast Q(A) := \int\int\mathds{1}_{A}(x+y)\dd P(x)\dd Q(y)$, where $\mathds{1}_A$ is the indicator of $A$. The convolution of measurable functions $f,g$ on $\R^{d}$ is $f \ast g(x) = \int f(x-y)\,g(y) \dd y$. We also recall that $\Gauss:=\cN(0,\sigma^2 \mathrm{I}_d)$, and use $\gauss(x) = (2\pi \sigma^{2})^{-d/2} e^{-\|x\|^{2}/(2\sigma^{2})}$, $x \in \R^{d}$, for the Gaussian density. The law of a random variable $X$ is denoted by $\law(X) = \PP \circ X^{-1}$. For any measure $Q$ on a measurable space $(S,\calS)$ and a measurable function $f:S\to\RR$, we write $Qf := \int_{S} f \dd Q$, whenever the corresponding integral exists. 
Let $\smash{\stackrel{w}{\to}}$, $\smash{\stackrel{d}{\to}}$, and $\smash{\stackrel{\Prob}{\to}}$ denote weak convergence of probability measures, convergence in distribution of random variables, and convergence in probability,~respectively: the first two are sometimes used interchangeably.

For a non-empty set $\cT$, let $\ell^\infty(\cT)$ be the space of all bounded functions $f:\cT\to\RR$, equipped with the uniform norm $\|f\|_\cT:= \sup_{t\in \cT} \big|f(t)\big|$. We write $\mathsf{Lip}_{1}(\R^{d}) := \{ f: \R^{d} \to \R : |f(x) - f(y) | \le \| x - y \|\,,\ \forall x,y \in \R^{d} \}$ for the set of functions on $\R^{d}$ whose Lipschitz constant is bounded by $1$, and define $\mathsf{Lip}_{1,0}(\R^{d}) := \big\{ f\,\in \mathsf{Lip}_{1}(\R^{d}) : f(0) = 0\big\}$. 
When $d$ is clear from the context we use the shorthands $\mathsf{Lip}_1$ and $\mathsf{Lip}_{1,0}$. We write $N(\epsilon, \cF, \mathsf{d})$ for the $\epsilon$-covering number of a function class $\cF$ w.r.t. a metric $\mathsf{d}$, and $N_{[\,]}(\epsilon, \cF, \mathsf{d})$ for the bracketing number. The $L^p(P)$ norm of a function $f$ on $(S,\cS)$ is $\big(\int_S |f|^p \dd P\big)^{1/p}$; $P$ is omitted from the norm/space notation when it is the Lebesgue measure. We use $N(\epsilon, \cF, L^p(P))$ and $N(\epsilon, \cF, \|\cdot\|_{L^p(P)})$ interchangeably. 
For a measurable set $A$, we use~$\ind_A$ for its indicator function, and $\cF|_A$ for the restriction of a function class $\cF$ to subset $A$ of its~domain.

\subsection{The 1-Wasserstein Distance}
The Kantorovich OT problem \citep{kantorovich1942translocation} between $P,Q\in\cP(\R^{d})$ with cost $c(x,y)$ seeks to minimize \(\int c(x,y)\dd\pi(x,y)\) over $\pi \in \Pi(P,Q)$,
where $\Pi(P,Q)$ is the set of couplings (or transport plans) between $P$ and $Q$. The 1-Wasserstein distance ($\wass$) \citep[Chapter 6]{villani2008optimal} is the minimum value achieved for the Euclidean distance cost $c(x,y)=\|x-y\|$:
\[
\wass(P,Q) = \inf_{\pi \in \Pi(P,Q)} \int_{\R^d \times \R^d} \|x - y\|\ \dd\pi(x,y).
\]

The Wasserstein space $\big(\cP_1(\RR^d),\wass\big)$ is a metric space, as $\wass$ metrizes weak convergence together with convergence of first moments \citep{villani2008optimal}. KR duality \citep{kantorovitch1958space} gives rise to the following representation of the 1-Wasserstein distance
\begin{equation*}
    \wass(P,Q) = \inf_{f \in \mathsf{Lip}_1} \int_{\R^d}  f\, \dd(P-Q).
\end{equation*}
This form is invariant to shifting the function $f$ by a constant. Consequently we may assume without loss of generality that $f(0)=0$, and optimize over the class $\mathsf{Lip}_{1,0}$ instead. 
We subsequently leverage this dual form to derive the limit distribution theory for the smooth version of the distance.

\subsection{Smooth 1-Wasserstein Distances}
The smooth 1-Wasserstein distance levels out local irregularities in the distributions via convolution with the Gaussian kernel. It is defined as 
\[
\gwass(P,Q):=\wass(P\ast\Gauss,Q\ast\Gauss),
\]
where $\Gauss$ is the $d$-dimensional isotropic Gaussian measure with parameter $\sigma$. \citet{Goldfeld2020GOT} showed that $\gwass$ inherits the metric and topological structure of $\wass$. Stability w.r.t. the smoothing parameter was also established in that work, showing that 
\begin{equation}
\big|\wass^{\mspace{1mu}\sigma}(P,Q)-\wass^{\mspace{1mu}\tau}(P,Q)\big|\leq 2\sqrt{d|\sigma^2-\tau^2|}.\label{EQ:stability}
\end{equation}
This enabled proving regularity properties of $\gwass(P,Q)$ in $\sigma$, such as continuity, monotonicity, and $\Gamma$-convergence.
In addition, \citet{Goldfeld2020convergence} showed that $\gwass$ alleviates the CoD, converging as $\EE\big[\gwass(P_n,P)\big] = O(n^{-1/2})$ in all dimensions under a sub-Gaussian condition on $P$. In the next section, sharpen the dependence on the expectation bound on $\sigma$, characterize the limit distributions of (scaled) empirical $\gwass$ under the null and the alternative, prove that the limits can be accurately estimated via the bootstrap, and establish concentration inequalities. Later, we move to a vanishing $\sigma$ analysis, Berry-Esseen type bounds, and applications.


\section{Limit Distribution Theory for Smooth Wasserstein Distance}
\label{sec: limit}

In this section we characterize the limit distribution of $\sqrt{n}\gwass (P_{n},P)$ in all dimensions. We also derive consistency of the bootstrap as a means for computing the limit distribution, establish concentration inequalities for $\gwass (P_{n},P)$, and then explore the two-sample setting.

\subsection{One-Sample Limit Distribution}
To state the result, some definitions are due. For $P\in\cP_2(\RR^d)$, let $G_{P}^{\mspace{1mu}\sigma}:=\big(G_{P}^{\mspace{1mu}\sigma}(f)\big)_{f\in\mathsf{Lip}_{1,0}}$ be a centered Gaussian process on $\mathsf{Lip}_{1,0}$ with covariance $\E\big[G_{P}^{\mspace{1mu}\sigma}(f)\,G_{P}^{\mspace{1mu}\sigma}(g)\big] = \Cov_{P} (f\ast\gauss,g\ast\gauss)$, where $f,g, \in \mathsf{Lip}_{1,0}$. One may verify that $|f\ast\gauss (x)| \le \| x \| + \sigma \sqrt{d}$ (cf. Section \ref{SUBSEC:CLT for W1 proof}), so that $P|f\ast\gauss |^{2} < \infty$, for all $f \in \mathsf{Lip}_{1,0}$ (which ensures that the said covariance function is well-defined). With that, we are ready to state the theorem.

\begin{theorem}[One-sample limit distribution]
\label{thm: CLT for W1}
Let $\R^{d} = \bigcup_{j=1}^{\infty} I_{j}$ be a partition of $\R^{d}$ into bounded convex sets with nonempty interior such that $K:=\sup_{j}\diam(I_{j})<\infty$ and $M_{j}:=\sup_{I_{j}} \|x\|\geq 1$ for all $j$. If $P\in\cP_2(\RR^d)$ satisfies
\begin{equation}
\label{eq: condition W1}
\sum_{j=1}^{\infty} M_{j}P(I_{j})^{1/2} < \infty,
 \end{equation}
then there exists a version of $G_P^{\mspace{1mu}\sigma}$ that is tight in $\ell^{\infty}(\mathsf{Lip}_{1,0})$, and denoting the tight version by the same symbol $G_{P}^{\mspace{1mu}\sigma}$, we have $\sqrt{n}\gwass(P_{n},P)  \stackrel{d}{\to}  \|G_{P}^{\mspace{1mu}\sigma}\|_{\mathsf{Lip}_{1,0}} =: L_P^{\mspace{1mu}\sigma}$. In addition, for any $\alpha > d/2$, we have 
\begin{equation}
\E\big[\gwass (P_{n},P)\big]\lesssim_{\alpha,d,K}  \frac{\sigma^{-\alpha+1}}{\sqrt{n}} \sum_{j=1}^{\infty} M_{j}P(I_{j})^{1/2}.\label{EQ:expected_bound}
\end{equation}

\end{theorem}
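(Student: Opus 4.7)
The plan is to apply Kantorovich--Rubinstein duality and then empirical process theory to the \emph{smoothed} function class $\calF_\sigma := \{ f \ast \gauss : f \in \mathsf{Lip}_{1,0} \}$. Using Fubini and the symmetry of $\gauss$, one has $\int h\, \mathrm{d}(P \ast \Gauss) = P(h \ast \gauss)$, so KR duality gives
\[
\gwass(P_n, P) \;=\; \sup_{f \in \mathsf{Lip}_{1,0}} (P_n - P)(f \ast \gauss),
\]
and thus $\sqrt{n}\,\gwass(P_n, P) = \| \GG_n \|_{\calF_\sigma}$ with $\GG_n := \sqrt{n}(P_n - P)$. The weak convergence statement will follow, via continuous mapping applied to the sup functional, once $\calF_\sigma$ is shown to be $P$-Donsker with a tight limit indexed by $\calF_\sigma$; reparametrizing through the bijection $f \mapsto f \ast \gauss$ then produces the asserted $G_P^{\mspace{1mu}\sigma}$ on $\mathsf{Lip}_{1,0}$.

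The analytic input is that Gaussian convolution turns $\mathsf{Lip}_{1,0}$ into a class of very smooth functions with uniformly controlled derivatives. Since $\int D^\beta \gauss = 0$ for $|\beta| \ge 1$, writing $D^\beta(f \ast \gauss)(x) = \int [f(x - y) - f(x)]\, D^\beta \gauss(y) \, \mathrm{d}y$ and using the Lipschitz estimate $|f(x-y) - f(x)| \le \| y \|$ yields
\[
\| D^\beta (f \ast \gauss) \|_\infty \;\le\; \int \| y \| \, | D^\beta \gauss(y) | \, \mathrm{d}y \;\lesssim_{d,\beta}\; \sigma^{1 - |\beta|},
\]
uniformly over $f \in \mathsf{Lip}_{1,0}$, while $|(f \ast \gauss)(x)| \le \| x \| + \sigma \sqrt{d}$ follows from $|f(x-y)| \le \| x \| + \| y \|$. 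On each bounded $I_j$, the Kolmogorov--Tikhomirov bound on bracketing numbers of smooth classes (e.g.\ Corollary 2.7.2 in van der Vaart--Wellner) combines with the derivative estimate above to give, for any integer $\alpha > d/2$,
\[
\log N_{[\,]}\bigl(\epsilon,\, \calF_\sigma|_{I_j},\, \| \cdot \|_\infty\bigr) \;\lesssim_{d,\alpha,K}\; \bigl( \sigma^{-\alpha+1} / \epsilon \bigr)^{d/\alpha}.
\]

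Next I would stitch local brackets into global ones using indicator functions $\ind_{I_j}$. Each local bracket of $L^\infty$-size $\epsilon$ on $I_j$ induces an $L^2(P)$ bracket of size $\epsilon \, P(I_j)^{1/2}$; choosing $\epsilon_j$ proportional to $(\eta \wedge M_j)$ (so that large-$j$ brackets use the trivial envelope bound $|f \ast \gauss(x)| \le M_j + \sigma \sqrt d$) and summing yields a global bracketing estimate of the form
\[
\log N_{[\,]}\bigl(\eta,\, \calF_\sigma,\, L^2(P)\bigr) \;\lesssim_{d,\alpha,K}\; \Bigl( \frac{\sigma^{-\alpha+1} \sum_j M_j P(I_j)^{1/2}}{\eta} \Bigr)^{d/\alpha}.
\]
With $\alpha > d/2$, the exponent $d/\alpha$ is strictly less than $2$, so the bracketing entropy integral $\int_0^\infty \sqrt{\log N_{[\,]}(\epsilon, \calF_\sigma, L^2(P))}\, \mathrm{d}\epsilon$ converges under the moment condition \eqref{eq: condition W1}; this establishes that $\calF_\sigma$ is $P$-Donsker (van der Vaart--Wellner, Section 2.5). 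The limit $\GG_n \stackrel{w}{\to} G$ in $\ell^\infty(\calF_\sigma)$ has the claimed tight Gaussian law, and continuous mapping gives $\sqrt{n}\,\gwass(P_n, P) \stackrel{w}{\to} \| G_P^{\mspace{1mu}\sigma} \|_{\mathsf{Lip}_{1,0}}$. The expectation bound \eqref{EQ:expected_bound} then comes from the standard bracketing maximal inequality $\E \| \GG_n \|_{\calF_\sigma} \lesssim \int_0^\infty \sqrt{\log N_{[\,]}(\epsilon, \calF_\sigma, L^2(P))}\, \mathrm{d}\epsilon$ evaluated against the bracketing bound above.

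The main obstacle is the stitching step: constructing brackets that simultaneously inherit the $\sigma^{1-\alpha}$ smoothness on each $I_j$ (so the local entropy is tame) and match the linear envelope $\| x \|$ through the weights $M_j$, so that the weighted sum of local bracket sizes amounts precisely to $\sum_j M_j P(I_j)^{1/2}$. The moment assumption \eqref{eq: condition W1} is calibrated exactly to make this weighted sum finite, which is what pins down both the Donsker property and the explicit form of \eqref{EQ:expected_bound}.
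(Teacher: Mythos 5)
Your high-level plan---KR duality to reduce to $\|\GG_n\|_{\calF_\sigma}$, and a derivative bound of the form $\|D^\beta(f\ast\gauss)\|_\infty\lesssim\sigma^{1-|\beta|}$ to tame the class---matches the paper's. But the route you take to the Donsker property is genuinely different from the paper's, and the step you flag as ``the main obstacle'' is not just unfinished, it would require an estimate that is false in the form you state it.

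The paper does \emph{not} compute a global $L^2(P)$ bracketing entropy for $\cF_\sigma$ in the proof of this theorem. Instead it partitions the class into pieces $\cF_j := \{f\ast\gauss\cdot\ind_{I_j}: f\in\mathsf{Lip}_{1,0}\}$, shows each piece is $P$-Donsker via the Hölder-ball covering-number bound (\cite[Theorem 2.7.1]{VaWe1996}, uniform in finitely supported $Q$) together with \cite[Theorems 2.5.2, 2.14.1]{VaWe1996}, and then invokes the partitioning criterion (Theorem 1.1 of \cite{vanderVaart1996}): a class is Donsker if each piece is Donsker and $\sum_j\E\|\GG_n\|_{\cF_j}<\infty$. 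The moment condition \eqref{eq: condition W1} is calibrated exactly to make that \emph{sum of expectations} converge, since $\E\|\GG_n\|_{\cF_j}\lesssim M_j' P(I_j)^{1/2}\lesssim\sigma^{-\alpha+1}M_j P(I_j)^{1/2}$.

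Your route instead aims for a global bracketing entropy integral, and you write
\[
\log N_{[\,]}\bigl(\eta,\,\calF_\sigma,\,L^2(P)\bigr)\;\lesssim\;\Bigl(\tfrac{\sigma^{-\alpha+1}\sum_j M_j P(I_j)^{1/2}}{\eta}\Bigr)^{d/\alpha}.
\]
This is not what the stitching gives. The correct way to combine local Hölder brackets on the $I_j$ into a global $L^2(P)$ bracket (optimizing over the allocation of local radii) is precisely \cite[Theorem 2.7.4]{VaWe1996}, which the paper records as its Lemma~\ref{lem:partition_entropy}: the constant multiplying $\eta^{-d/\alpha}$ is $\bigl[\sum_j\bigl((M_j')^2 P(I_j)\bigr)^{d/(d+2\alpha)}\bigr]^{(d+2\alpha)/(2\alpha)}$, not a power of $\sum_j M_j' P(I_j)^{1/2}$. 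Those two sums are not comparable: writing $b_j:=M_j P(I_j)^{1/2}$, finiteness of $\sum_j b_j$ does not imply finiteness of $\sum_j b_j^{2d/(d+2\alpha)}$, because $2d/(d+2\alpha)<1$ for $\alpha>d/2$ (take $b_j\sim j^{-\beta}$ with $1<\beta\le(d+2\alpha)/(2d)$). So the global bracketing-integral approach, while valid in spirit, requires a strictly stronger summability condition than \eqref{eq: condition W1}. This is why the paper reserves Lemma~\ref{lem:partition_entropy} for the Berry--Esseen analysis (Section~\ref{sec: Berry-Esseen type results}), where it imposes the stronger polynomial moment hypothesis $P\|x\|_\infty^l<\infty$ for suitable $l$, and proves Theorem~\ref{thm: CLT for W1} by the partitioning Donsker theorem so that only \eqref{eq: condition W1} is needed.

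To repair your argument without changing approach, you would have to either (a) strengthen the hypothesis to something implying $\sum_j(M_j^2 P(I_j))^{d/(d+2\alpha)}<\infty$, or (b) abandon the global bracketing integral and switch to the partitioning criterion of \cite{vanderVaart1996}, which is what the paper does. The expectation bound \eqref{EQ:expected_bound}, by contrast, is fine in your framework: summing the local maximal inequalities $\E\|\GG_n\|_{\cF_j}\lesssim M_j'P(I_j)^{1/2}$ over $j$ needs only condition \eqref{eq: condition W1} and is exactly how the paper derives it.
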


The proof, given below in Section \ref{SUBSEC:CLT for W1 proof}, uses KR duality to translate the Gaussian convolution in the measure space to the convolution of Lipschitz functions with a Gaussian density. It is then shown that the class of Gaussian-smoothed Lipschitz functions is $P$-Donsker by bounding the metric entropy of the function class restricted to each $I_{j}$. As the next corollary shows, the dependence on $\sigma$ in \eqref{EQ:expected_bound} can be tightened to $\sigma^{-d/2 + 1}$ up to a $\log n$ term.

\begin{corollary}[Expectation bound]\label{cor: sharp gwass rate}
Under Condition \eqref{eq: condition W1}, we have
\begin{equation}
\E\big[\gwass (P_{n},P)\big] \lesssim_{d,K}  \frac{\sigma^{-d/2+1}\log n}{\sqrt{n}} \sum_{j=1}^{\infty} M_{j}P(I_{j})^{1/2}.\label{EQ:expected_sharp}
\end{equation}
\end{corollary}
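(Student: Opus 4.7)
The plan is to revisit the expectation bound \eqref{EQ:expected_bound} from Theorem \ref{thm: CLT for W1} and optimize the free exponent $\alpha > d/2$ as a function of $n$. The bound in \eqref{EQ:expected_bound} is proved via a Dudley-type entropy integral for the smoothed Lipschitz class $\{f \ast \gauss : f \in \mathsf{Lip}_{1,0}\}$ restricted to each $I_j$. Since this class has (bracketing) metric entropy growing like $\epsilon^{-d/\alpha}$, the corresponding integral $\int_0^{D_j} \epsilon^{-d/(2\alpha)}\,d\epsilon$ equals $D_j^{1-d/(2\alpha)}/(1-d/(2\alpha))$, so the implicit constant $C_{\alpha, d, K}$ exhibits a polynomial blow-up of order $1/(\alpha - d/2)$ as $\alpha \downarrow d/2$.

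Concretely, I would first track the proof of Theorem \ref{thm: CLT for W1} to obtain the refined inequality
\begin{equation*}
\E[\gwass(P_n,P)] \leq \frac{c_{d,K}}{\alpha - d/2}\cdot\frac{\sigma^{-\alpha+1}}{\sqrt{n}} \sum_{j\geq 1} M_j P(I_j)^{1/2}, \qquad \alpha \in (d/2, d],
\end{equation*}
where $c_{d,K}$ is independent of $\alpha$. Then I would set $\alpha = d/2 + 1/\log n$ (for $n$ large enough that this lies in the admissible range). With this choice, $(\alpha - d/2)^{-1} = \log n$, and $\sigma^{-\alpha+1} = \sigma^{-d/2+1}\cdot \sigma^{-1/\log n}$. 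The extra factor $\sigma^{-1/\log n} = \exp(\log(1/\sigma)/\log n)$ is bounded by an absolute constant under the standing assumption that $\sigma$ does not decay faster than polynomially in $n$ (in particular, for any fixed $\sigma$), which yields \eqref{EQ:expected_sharp}.

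The principal difficulty is verifying the exact form $C_{\alpha, d, K} \asymp 1/(\alpha - d/2)$ of the blow-up, since it requires propagating the $\alpha$-dependence through every step of the entropy-integral argument from the proof of Theorem \ref{thm: CLT for W1}. Should this tracking become cumbersome, an equivalent route is to redo the chaining directly at the critical exponent $\alpha = d/2$: one truncates the Dudley integral at $\epsilon_0 \asymp 1/\sqrt{n}$ and estimates the small-scale remainder via Bernstein-type deviation inequalities, producing the $\log n$ factor from the number of chaining levels rather than from inverting the constant's $\alpha$-dependence.
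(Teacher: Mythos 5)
Your primary approach is a natural idea, and the claimed blow-up $C_{\alpha,d,K} \asymp 1/(\alpha - d/2)$ is correct: it comes from the Dudley entropy integral $\int_0^1 \epsilon^{-d/(2\alpha)}\,\dd\epsilon = 2\alpha/(2\alpha - d)$, while the $\alpha$-dependence entering through Lemmas \ref{lem: fractional derivatives} and \ref{lem: metric entropy} stays bounded for $\alpha$ in a fixed compact interval. The gap is in the substitution step. Plugging in $\alpha = d/2 + 1/\log n$ leaves the residual factor $\sigma^{-1/\log n} = \exp\big(\log(1/\sigma)/\log n\big)$, which cannot be bounded independently of $\sigma$ over $\sigma \in (0,1]$; your argument therefore needs the side condition $\sigma \gtrsim n^{-C}$. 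The corollary as stated carries no such restriction (the constant depends only on $d$ and $K$, with all $\sigma$-dependence explicit). For the paper's downstream use, $\sigma = n^{-1/d}$, this is harmless, but it is a real gap relative to the result as written.

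Your fallback route is what the paper actually does. Rather than optimizing $\alpha$, the paper works at the critical exponent $\alpha = d/2$ and invokes a Dudley-type bound with a free lower truncation level $\gamma$ (Lemma \ref{lem: non integrable linf entropy}): for each restricted class $\cF_j$, the entropy exponent equals $1$, so the chaining integral is $\int_\gamma^{D_j}(D_j/\epsilon)\,\dd\epsilon = D_j\log(D_j/\gamma)$ with $D_j = M_j' P_n(I_j)^{1/2}$, and the choice $\gamma = D_j/\sqrt{n}$ produces the $\log n$ factor with no leftover $\sigma$-dependence beyond the explicit $\sigma^{-d/2+1}$ hidden in $M_j'$. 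No Bernstein-type correction for the small-scale remainder is needed; the truncation term $\gamma\sqrt{n}$ is already dominated by the logarithmic part. This route is cleaner, uniform in $\sigma$, and avoids re-tracking $\alpha$ through the whole proof of Theorem \ref{thm: CLT for W1}, so I would promote it from fallback to main argument.
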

Corollary \ref{cor: sharp gwass rate} is proven in Appendix \ref{SUBSEC: sharp gwass rate proof}. Combining \eqref{EQ:expected_sharp} with the stability result~\eqref{EQ:stability} recovers the (correct) $n^{-1/d}$ bound on the classic empirical $\wass$, up to logarithmic factors. Specifically, taking $\sigma=n^{-1/d} \big ( \log n \sum_{j=1}^{\infty} M_{j}P(I_{j})^{1/2} \big )^{2 / d}$, we have
\[
\begin{split}
\E\big[\wass (P_{n},P)\big] &= \E\big[\gwass (P_{n},P)\big] + \E\big[\gwass (P_{n},P) - \wass (P_n,P)\big] \\
&\lesssim_{d,K}  \inf_{\sigma \in (0,1)}\frac{\sigma^{-d/2+1}\log n}{\sqrt{n}} \sum_{j=1}^{\infty} M_{j}P(I_{j})^{1/2} + \sigma \\
&\lesssim_{d,K} n^{-1/d} (\log n)^{2/d} \Big ( \sum_{j=1}^{\infty} M_{j}P(I_{j})^{1/2} \Big )^{\frac{2}{d}}. 
\end{split}
\]

A bound similar to \eqref{EQ:expected_bound} and \eqref{EQ:expected_sharp} was derived in Proposition 1 of \citet{Goldfeld2020convergence} with explicit dependence on $\sigma$ and $d$. However, their dependence on $\sigma$ is not sharp enough to imply the known polynomial rates for the empirical unsmoothed distance.

\begin{remark}[Discussion of Condition \eqref{eq: condition W1}]
\label{rem:moment_condition}
Let $\{ I_{j} \}$ consist of cubes with side length $1$ and integral lattice points as vertices. One may then obtain the bound
\[
\sum_{j=1}^{\infty} M_{j} P(I_{j})^{1/2} \lesssim_{d} \sum_{k=1}^{\infty} k^{d} P\big( \| x \|_{\infty} > k \big)^{1/2}  \lesssim \int_{1}^{\infty} t^{d} P\big(\| x \|_{\infty} > t \big)^{1/2}\dd t,
\]
which is finite (by Markov's inequality) if there exists $\epsilon>0$ such that $P|x_{j}|^{2(d+1)+\epsilon} < \infty$ for all $j$. Proposition 1 in \citet{Goldfeld2020convergence} shows that $\E\big[\gwass (P_{n},P)\big] =  O(n^{-1/2})$ whenever $P$ is sub-Gaussian. Theorem \ref{thm: CLT for W1} thus relaxes this moment condition, in addition to deriving a limit distribution. For $d=1$, the above condition can be relaxed to $\int_{0}^\infty \sqrt{P(|x| > t)}\,\dd t < \infty$ via a tighter argument with disjoint unit cubes, matching the condition from \citep[Section 3.1]{bobkov2019one} for empirical $\wass$.

\end{remark}

\subsection{Bootstrap Consistency for One-Sample Limit Distribution}

We next show that the cumulative distribution function (CDF) of the limiting variable in Theorem \ref{thm: CLT for W1} can be estimated via the bootstrap \citep[Chapter 3.6]{VaWe1996}. To that end we first need to guarantee that the distribution of $L_P^{\mspace{1mu}\sigma}$ has a Lebesgue density that is sufficiently regular (see Appendix \ref{SUPP: bootstrap proof}).  

\begin{lemma}[Distribution of $L_P^{\mspace{1mu}\sigma}$]
\label{lem: limit variable}
Assume that Conditions \eqref{eq: condition W1} holds and that $P$ is not a point mass. Then the distribution of $L_P^{\mspace{1mu}\sigma}$ is absolutely continuous w.r.t. the Lebesgue measure and its density is positive and continuous on $(0,\infty)$ except for at most countably many points. 
\end{lemma}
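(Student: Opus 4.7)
The plan is to realize $L_P^{\mspace{1mu}\sigma}$ as the norm of a centered Gaussian random element in a separable Banach space and then invoke a classical theorem of Tsirelson and Davydov--Lifshits on distributions of Gaussian seminorms.

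I would first reduce the two-sided supremum defining $L_P^{\mspace{1mu}\sigma}$ to a one-sided one: since $\mathsf{Lip}_{1,0}$ is closed under negation and $G_P^{\mspace{1mu}\sigma}$ is linear in its argument, $L_P^{\mspace{1mu}\sigma}=\sup_{f\in\mathsf{Lip}_{1,0}}G_P^{\mspace{1mu}\sigma}(f)$. By Theorem~\ref{thm: CLT for W1}, the tight version of $G_P^{\mspace{1mu}\sigma}$ has sample paths almost surely in the separable Banach subspace $B\subseteq\ell^{\infty}(\mathsf{Lip}_{1,0})$ of bounded functions uniformly continuous with respect to the intrinsic $L^{2}(P)$-pseudometric $\rho(f,g):=\sqrt{\Var\big((f-g)\ast\gauss(X)\big)}$ on $\mathsf{Lip}_{1,0}$. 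On $B$, the functional $\|\cdot\|_{\mathsf{Lip}_{1,0}}$ is a continuous seminorm, so $L_P^{\mspace{1mu}\sigma}$ is (up to a null set) a seminorm of the Gaussian element $G_P^{\mspace{1mu}\sigma}\in B$.

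I would then apply the classical result that the distribution of a continuous seminorm of a centered Gaussian element in a separable Banach space is absolutely continuous on $(\sigma_{0},\infty)$, where $\sigma_{0}$ denotes its essential infimum, with a density that is positive and continuous on this interval except possibly at a countable set of points. One route to this is via the log-concavity of distributions of convex Lipschitz functionals of Gaussian measures (e.g., via Prekopa--Leindler applied to the sub-level sets of the norm, which are convex), combined with the classical regularity theorems of Tsirelson and Davydov--Lifshits.

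It remains to show that $\sigma_{0}=0$. The hypothesis that $P$ is not a point mass ensures there exists a unit vector $u\in\R^{d}$ with $\Var_{P}(u\cdot X)>0$; taking $f(x)=u\cdot x$, which lies in $\mathsf{Lip}_{1,0}$ and satisfies $f\ast\gauss=f$, yields $\Var\big(G_P^{\mspace{1mu}\sigma}(f)\big)>0$, so $G_P^{\mspace{1mu}\sigma}$ is nondegenerate. Since the topological support of any centered Gaussian measure on a separable Banach space contains the origin, $\PP(L_P^{\mspace{1mu}\sigma}<\varepsilon)>0$ for every $\varepsilon>0$, giving $\sigma_{0}=0$. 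The main delicate step is arguably the first, where the tightness produced by Theorem~\ref{thm: CLT for W1} must be used to place $G_P^{\mspace{1mu}\sigma}$ inside a separable Banach space on which $\|\cdot\|_{\mathsf{Lip}_{1,0}}$ is a continuous seminorm; once that framework is set up, the density theorem and the small-ball argument apply essentially off the shelf.
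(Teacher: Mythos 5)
Your approach is essentially the same as the paper's: realize $L_P^{\mspace{1mu}\sigma}$ as the norm of a tight Gaussian random element in the separable Banach space $\calC_u(\cF_\sigma)$ (bounded, intrinsically uniformly continuous functions on $\cF_\sigma$), invoke the Davydov--Lifshits regularity theorem (Theorem~11.1 in Davydov--Lifshits--Smorodina, which is what the paper cites) to get absolute continuity and density regularity on $(\sigma_0,\infty)$, and then argue $\sigma_0=0$. Your route to $\sigma_0=0$, via the fact that the origin lies in the topological support of any centered Gaussian law on a separable Banach space, is a perfectly good substitute for the small-ball estimate the paper cites from Ledoux--Talagrand.

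There is, however, one gap. Showing $\sigma_0=0$ is not by itself sufficient: the Davydov--Lifshits theorem yields absolute continuity only on the \emph{open} interval $(\sigma_0,\infty)$ and does not exclude an atom at $\sigma_0$. Since the lemma asserts absolute continuity of the whole law of $L_P^{\mspace{1mu}\sigma}$ with respect to Lebesgue measure, you must additionally rule out $\PP(L_P^{\mspace{1mu}\sigma}=0)>0$. The paper does this explicitly by noting that $\PP(L_P^{\mspace{1mu}\sigma}=0)\le\PP\big(G_P^{\mspace{1mu}\sigma}(f)=0\big)=0$ for any $f\in\mathsf{Lip}_{1,0}$ with $\Var_P(f\ast\gauss)>0$. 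You have already produced such an $f$ (the linear functional $x\mapsto u\cdot x$ with $\Var_P(u\cdot X)>0$, which is fixed by the Gaussian convolution), so the fix is one line, but the step is logically necessary and should be stated: the small-ball argument alone does not close it.
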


To define the bootstrap estimate, let $X_{1}^{B},\dots,X_{n}^{B}$ be i.i.d. from $P_n$ conditioned on $X_1,\ldots,X_n$, and set $P_{n}^{B}:= n^{-1}\sum_{i=1}^{n}\delta_{X_{i}^{B}}$
as the bootstrap empirical distribution. 
Let $\PP^{B}$ be the probability measure induced by the bootstrap (i.e., 
the conditional probability given $X_1,X_2,\dots$). We have the following bootstrap consistency.

\begin{corollary}[Bootstrap consistency]
\label{cor: bootstrap}
If Condition \eqref{eq: condition W1} holds and $P$ is not a point mass, then 
\[
\sup_{t \ge 0}  \left|\PP^{B}\big(\sqrt{n}\gwass (P_{n}^{B},P_{n})  \le t \big)- \PP\big( L_{P}^{\mspace{1mu}\sigma} \le t\big) \right| \to 0\quad \mbox{a.s.}
\]
\end{corollary}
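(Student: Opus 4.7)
The plan is to combine Kantorovich--Rubinstein duality with the conditional bootstrap functional central limit theorem for Donsker classes, followed by the continuous mapping theorem and Polya's uniform convergence theorem. First, applying KR duality to the smoothed measures and transferring the Gaussian convolution from the measures to the test functions gives
\[
\sqrt{n}\gwass(P_n^B, P_n) = \|\GG_n^B\|_{\calF_\sigma}, \qquad \calF_\sigma := \{f \ast \gauss : f \in \mathsf{Lip}_{1,0}\},
\]
where $\GG_n^B := \sqrt{n}(P_n^B - P_n)$ is the bootstrap empirical process indexed by $\calF_\sigma$. This parallels the reduction used to prove Theorem~\ref{thm: CLT for W1}.

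Second, the proof of Theorem~\ref{thm: CLT for W1} establishes that $\calF_\sigma$ is $P$-Donsker under Condition~\eqref{eq: condition W1}, with envelope $F(x) = \|x\| + \sigma\sqrt{d}$ satisfying $PF^{2} < \infty$ (since $P \in \cP_2(\RR^d)$ is implicit in the hypothesis). I would then invoke the nonparametric bootstrap CLT for Donsker classes (e.g., Theorem~3.6.2 in van der Vaart and Wellner, 1996), which states that, conditionally on $X_1, X_2, \ldots$, the process $\GG_n^B$ converges weakly to the tight Gaussian process $G_P^{\mspace{1mu}\sigma}$ in $\ell^\infty(\calF_\sigma)$ outside a $\PP$-null set. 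Since $h \mapsto \|h\|_{\calF_\sigma}$ is continuous on $\ell^\infty(\calF_\sigma)$, the continuous mapping theorem applied conditionally yields $\sqrt{n}\gwass(P_n^B, P_n) \stackrel{w}{\to} L_P^{\mspace{1mu}\sigma}$ conditionally on the sample, $\PP$-almost surely.

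Third, Lemma~\ref{lem: limit variable} asserts that the distribution of $L_P^{\mspace{1mu}\sigma}$ is absolutely continuous on $(0,\infty)$, and since $P$ is not a point mass, the coordinate projection $G_P^{\mspace{1mu}\sigma}(e \cdot x)$ has strictly positive variance for some unit vector $e$, forcing $L_P^{\mspace{1mu}\sigma} > 0$ a.s. so that $\PP(L_P^{\mspace{1mu}\sigma} = 0) = 0$. Hence the CDF of $L_P^{\mspace{1mu}\sigma}$ is continuous on $[0, \infty)$, and Polya's uniform convergence theorem upgrades the pointwise conditional weak convergence to the uniform convergence of CDFs claimed in the corollary. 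The main technical obstacle is handling the continuous mapping step in the non-separable space $\ell^\infty(\calF_\sigma)$; this is dealt with via the bounded Lipschitz metric formulation of conditional weak convergence used in the bootstrap theorem of van der Vaart and Wellner, and the remaining pieces are a standard chaining of these tools.
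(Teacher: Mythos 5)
Your proposal follows the same route as the paper: KR duality rewrites $\sqrt{n}\gwass(P_n^B,P_n)$ as $\|\GG_n^B\|_{\cF_\sigma}$, the bootstrap CLT for $P$-Donsker classes from van der Vaart and Wellner (the paper cites their Theorem~3.6.3, you cite the neighboring Theorem~3.6.2; the relevant hypothesis $P^*F^2<\infty$ holds under Condition~\eqref{eq: condition W1} since $P\in\cP_2(\RR^d)$) gives conditional weak convergence of $\GG_n^B$ to $G_P^{\mspace{1mu}\sigma}$ in $\ell^\infty(\cF_\sigma)$ outer almost surely, the continuous mapping theorem passes to the sup, and Lemma~\ref{lem: limit variable} plus Polya's theorem upgrades pointwise to uniform convergence of CDFs. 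Your added remark on why $\PP(L_P^{\mspace{1mu}\sigma}=0)=0$ is exactly the content already established in Lemma~\ref{lem: limit variable} and is correctly invoked.
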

This corollary, together with continuity of the distribution function of $L_{P}^{\mspace{1mu}\sigma}$ implies that for $\hat{q}_{1-\alpha} := \inf \big\{ t \ge 0 : \PP^{B} \big(\sqrt{n}\gwass (P_{n}^{B},P_{n}) \le t\big) \ge 1-\alpha \big\}$ (which can be computed numerically), we have $\PP\big( \sqrt{n}\gwass (P_{n},P) > \hat{q}_{1-\alpha}\big) = \alpha+o(1)$.

\subsection{Concentration Inequalities}


Next, we consider quantitative concentration inequalities for $\gwass (P_{n},P)$, under different moment conditions on $P$. For $\alpha > 0$, let $\| \xi \|_{\psi_{\alpha}} := \inf \{ C > 0 : \E[e^{(|\xi|/C)^{\alpha}}] \le 2 \}$ be the Orlicz $\psi_{\alpha}$-norm for a real-valued random variable $\xi$ (if $\alpha \in (0,1)$, then $\| \cdot \|_{\psi_{\alpha}}$ is a quasi-norm). We focus on the unbounded support case since a concentration inequality for compactly supported distributions was derived in \citet{Goldfeld2020limit_wass}.

\begin{proposition}[Concentration inequalities]
\label{prop: concentration}
Let $X\sim P$ with $\supp(P)=\cX\subseteq \R^d$ and suppose one of the following conditions holds:
(i) $\big\| \| X \| \big\|_{\psi_{\alpha}} < \infty$ for some $\alpha \in (0,1]$;
(ii) $P\|x\|^{q} < \infty$ for some $q \in [1,\infty)$.
Then, for all $\eta \in (0,1)$, $\delta > 0$, there exist constants $C := C_{\eta,\delta}$, such that for all $t > 0$:
\be
\label{eq:gwass_concentration}
\PP\Big(\gwass(P_n,P) > (1+\eta)\EE\big[\gwass(P_n,P)\big] + t\Big) \leq \exp \left (-\frac{nt^{2}}{C\big(P\|x\|^{2}+\sigma^{2}d\big)} \right ) + a(n,t),
\ee
where
\[a(n,t) = \begin{cases}
3 \exp \left ( -\left ( \frac{nt}{C \left((\log(1+n))^{1/\alpha} \left\|\| X \|  \right\|_{\psi_{\alpha}}+\sigma \sqrt{d}\right)} \right)^{\alpha}\right )&, \text{ under (i)} \vspace{2mm}\\
C\big(n^{1/q}\E[ \| X \|^{q}] + \sigma^{q} d^{q/2}\big)(nt)^{-q}&,\text{ under (ii)}
\end{cases}.\]
\end{proposition}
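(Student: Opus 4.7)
The plan is to recast $\gwass(P_n,P)$ as the supremum of a centered empirical process and then invoke standard concentration machinery (Bousquet's inequality in the bounded case, Adamczak's inequality in the unbounded cases). By KR duality,
\[
\gwass(P_n,P) \;=\; \sup_{f \in \mathsf{Lip}_{1,0}} (P_n - P)(f \ast \gauss) \;=\; \|P_n - P\|_{\cF}, \quad \cF := \{ f \ast \gauss : f \in \mathsf{Lip}_{1,0} \},
\]
so the problem reduces to a concentration inequality for the supremum of an empirical process indexed by $\cF$. The pointwise bound $|f\ast\gauss(x)|\leq \|x\|+\sigma\sqrt{d}$ (noted in Section~\ref{SUBSEC:CLT for W1 proof}) produces both an envelope $F(x)=\|x\|+\sigma\sqrt{d}$ for $\cF$ and the variance bound
\[
\sup_{f \in \cF} \Var_P(f) \;\leq\; \sup_{f \in \cF} P f^2 \;\leq\; P F^2 \;\lesssim\; P\|x\|^2 + \sigma^2 d,
\]
which is exactly the quantity appearing in the Gaussian term of \eqref{eq:gwass_concentration}.

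For case (i), on $\cX$ the envelope is uniformly bounded by $\diam(\cX)+\sigma\sqrt{d}$. I would apply Bousquet's version of Talagrand's inequality to the centered process $\sup_{f\in\cF}(P_n-P)f$. Bousquet's inequality gives a Bernstein-type tail with a sub-Gaussian piece governed by the variance and a sub-exponential piece governed by the sup-norm of the envelope, yielding the desired $\exp(-nt^2/(C\cdot(P\|x\|^2+\sigma^2d)))+\exp(-nt/(C(\diam(\cX)+\sigma\sqrt{d})))$ structure. The factor $(1+\eta)$ in front of $\EE[\gwass(P_n,P)]$ is the standard device that lets one absorb the expectation perturbation coming from Bousquet's statement, with the constant $C$ then depending on $\eta$ (and any slack $\delta$ used when trading symmetrization and moment constants).

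For cases (ii) and (iii), the class $\cF$ is not uniformly bounded. I would appeal to Adamczak's concentration inequality for suprema of unbounded empirical processes (Adamczak 2008; see also Giné--Koltchinskii and the formulation in \cite{VaWe1996}). That result gives, for any $\eta\in(0,1)$,
\[
\PP\Bigl[\|P_n-P\|_\cF > (1+\eta)\EE\|P_n-P\|_\cF + t\Bigr] \;\leq\; \exp\!\Bigl(-\tfrac{nt^2}{C(\eta,\delta)\sup_{f\in\cF}\Var_P(f)}\Bigr) + 3\PP\Bigl(\max_{1\leq i\leq n} F(X_i) > \tfrac{nt}{C(\eta,\delta)}\Bigr),
\]
for a constant $C(\eta,\delta)$ that also absorbs the mild inflation $(1+\eta)$ of the expectation. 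It then remains to control the last probability. Under (ii), the Orlicz maximal inequality gives $\bigl\|\max_i F(X_i)\bigr\|_{\psi_\alpha}\lesssim (\log(1+n))^{1/\alpha}\bigl\|\|X\|\bigr\|_{\psi_\alpha}+\sigma\sqrt{d}$, and the definition of $\psi_\alpha$-norm converts this into the stated $\alpha$-subexponential tail. Under (iii), Markov's inequality plus the union bound gives $\PP(\max_i F(X_i)>s)\leq n P(F(X)>s)\leq n s^{-q}\EE[F(X)^q]$, and using $(a+b)^q\lesssim_q a^q+b^q$ yields the polynomial bound $C(n^{1/q}\EE\|X\|^q+\sigma^q d^{q/2})/(n^q t^q)$ after substituting $s=nt/C$.

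The main technical obstacle is ensuring that Adamczak's inequality is being applied in the form with a prefactor $(1+\eta)$ multiplying the expectation of the supremum (rather than in a centered-variance Bernstein form that ignores the mean), since this is what produces the clean statement \eqref{eq:gwass_concentration}. This requires the standard symmetrization/contraction argument in Adamczak's proof, which introduces the extra $\delta$-dependent slack hidden in $C_{\eta,\delta}$. Once that version is invoked, the three cases differ only in how one estimates the tail of $\max_i F(X_i)$, and the routine calculations above finish the proof.
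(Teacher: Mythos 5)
Your proposal follows essentially the same route as the paper: by KR duality reduce $\gwass(P_n,P)$ to $\|P_n-P\|_{\cF_\sigma}$ with envelope $F(x)=\|x\|+\sigma\sqrt{d}$ and the variance bound $\sup_f Pf^2\lesssim P\|x\|^2+\sigma^2 d$, then invoke Talagrand/Bousquet for the bounded case (i) and Adamczak's Fuk--Nagaev-type inequalities for unbounded empirical processes for cases (ii) and (iii), finishing with the Orlicz maximal inequality (ii) or Markov plus the union bound on $\max_i F(X_i)$ (iii). The paper's proof is just a two-line pointer to precisely these references (Lemma~1 and Theorem~2 of Adamczak~2008, Theorem~4 of Adamczak~2010), so your write-up is correct and in fact more explicit than the paper's.
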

The proof of the above result can be found in Appendix \ref{SUBSEC: concentration}.

\begin{remark}[Comparison to \citet{Goldfeld2020GOT}]
Corollary 1 of \citet{Goldfeld2020GOT} establishes the following Gaussian concentration of $\gwass(P_n,P)$ when $P$ has bounded support:
\be
\label{eq: concentration_ineq_compact}
\PP \big (  \gwass (P_{n},P) \ge \E\big[\gwass (P_{n},P)\big]+ t \big ) \le e^{-\frac{2nt^{2}}{\diam (\mathcal{X})^{2}}},\quad \forall t>0.
\ee
Herein, we generalize this result to unbounded support. The term $a(n,t)$ accounts for different concentration behavior of $\gwass(P_n, P)$ for large $t$. We expect that the logarithmic term in Case (i) can be removed via refinements akin to \citet{fournier2015rate}. 
\end{remark}

\subsection{Two-Sample Limit Distributions and Bootstrap}

Theorem \ref{thm: CLT for W1} and Corollary \ref{cor: bootstrap} can be extended to the two-sample case, i.e., accounting for $\gwass(P_n,Q_m)$, where $P_n$ and $Q_m$ correspond to possibly different population measures $P$ and $Q$. We first treat the null case ($P=Q$) and then consider the alternative ($P\neq Q$). Let $N := n + m$ and consider the statistic
\[
W_{m, n} := \sqrt{\frac{mn}{N}}\gwass(P_n,Q_m) = \sqrt{\frac{mn}{N}}\|P_n - Q_m\|_{\cF_\sigma},
\]
where $\cF_\sigma:= \left\{ f\ast\gauss : f \in \mathsf{Lip}_{1,0} \right\}$ (cf. \eqref{eq: smooth Wasserstein}), and define its bootstrap version as follows. Let $H_N$ be the pooled empirical measure constructed from i.i.d. samples $X_1, \dots, X_n \sim P$ and $Y_1, \dots, Y_m \sim Q$. Letting $Z_1, \dots, Z_N$ be a bootstrap sample from $H_N$, construct bootstrap measures $P_n^B :=n^{-1}\sum_{i=1}^n \delta_{Z_i}$, $Q_m^B := m^{-1}\sum_{i=n+1}^N \delta_{Z_i}$, and let 
\[W_{m,n}^B = \sqrt{\frac{mn}{N}} \gwass(P_n^B, Q_m^B).\]
The following corollary shows that when $P = Q$, $W_{m,n}$ converges in distribution to $L_P^{\mspace{1mu}\sigma}$, and the bootstrap distribution of $W_{m,n}^B$ is also consistent for the same limit (see Appendix~\ref{subsec: 2 sample limit proof} for the~proof).

\begin{corollary}[Two-sample limit distribution and bootstrap under the null]
\label{cor: 2 sample limit}
\ \ Let\\ $P,Q\in\cP(\RR^d)$ be such that $\sum_{j=1}^{\infty} M_{j}P(I_{j})^{1/2} ,\sum_{j=1}^{\infty} M_{j}Q(I_{j})^{1/2} < \infty$ and assume that $P$ is non-degenerate ($M_j$ is as in Theorem~\ref{thm: CLT for W1}). 
For $m,n \in \NN$, set $N = m+n$, and assume $n/N \to \lambda \in (0,1)$ as $m,n \to \infty$. The following hold:
\begin{enumerate}[(i)]
    \item If $P=Q$, then $W_{m,n} \stackrel{d}{\to} L_P^{\mspace{1mu}\sigma}$, and $W_{m,n} \stackrel{\PP}{\to} \infty$, otherwise.
    \item For $H:= \lambda P + (1-\lambda)Q$, we have $\law\big(W_{m,n}^B|X_1,\dots,X_n,Y_1,\dots,Y_m\big) \stackrel{w}{\to} \law \big ( L_H^{\mspace{1mu}\sigma} \big ),$ $\PP-$a.s. When $P=Q=H$ we further have
\[\sup_{t\in\RR}\,\Big|\PP^B\big(W_{m,n}^B \leq t\big) - \PP\big(L_P^{\mspace{1mu}\sigma} \leq t\big)\Big| \to 0\quad \text{a.s.}\]
\end{enumerate}
\end{corollary}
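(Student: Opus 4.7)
The plan is to reduce both claims to weak-convergence statements for the function class $\cF_\sigma := \{f \ast \gauss : f \in \mathsf{Lip}_{1,0}\}$, which the proof of Theorem~\ref{thm: CLT for W1} shows to be Donsker with respect to any probability measure satisfying \eqref{eq: condition W1}. KR duality gives $\gwass(P_m,Q_n) = \|P_m - Q_n\|_{\cF_\sigma}$ and the map $f \mapsto f \ast \gauss$ is an isometry of sup-norms, so it suffices to analyze the random element $\sqrt{mn/N}(P_m - Q_n) \in \ell^\infty(\cF_\sigma)$ and then invoke the continuous mapping theorem with $\|\cdot\|_{\cF_\sigma}$.

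\emph{Part (i).} Decompose
\[
\sqrt{\tfrac{mn}{N}}(P_m - Q_n) = \sqrt{\tfrac{n}{N}}\,\sqrt{m}(P_m - P) - \sqrt{\tfrac{m}{N}}\,\sqrt{n}(Q_n - Q) + \sqrt{\tfrac{mn}{N}}(P - Q).
\]
When $P = Q$, the deterministic term vanishes. Theorem~\ref{thm: CLT for W1} applied to $P$ and $Q$, combined with independence of the two samples, gives
\[
\big(\sqrt{m}(P_m - P),\ \sqrt{n}(Q_n - Q)\big) \stackrel{w}{\to} \big(G_P^{\mspace{1mu}\sigma},\ \tilde G_P^{\mspace{1mu}\sigma}\big)
\]
in $\ell^\infty(\cF_\sigma) \times \ell^\infty(\cF_\sigma)$, with independent components. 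Using $m/N \to \lambda$, the limit of $\sqrt{mn/N}(P_m - Q_n)$ equals $\sqrt{1-\lambda}\,G_P^{\mspace{1mu}\sigma} - \sqrt{\lambda}\,\tilde G_P^{\mspace{1mu}\sigma}$, a centered Gaussian process whose covariance coincides with that of $G_P^{\mspace{1mu}\sigma}$ since $(1-\lambda)+\lambda = 1$. Continuous mapping with the sup-norm yields $W_{m,n} \stackrel{w}{\to} L_P^{\mspace{1mu}\sigma}$. When $P \ne Q$, since $\gwass$ metrizes weak convergence together with convergence of first moments on $\cP_1(\R^d)$, we have $\gwass(P_m,Q_n) \to \gwass(P,Q) > 0$ a.s., whence $W_{m,n} \stackrel{P}{\to} \infty$.

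\emph{Part (ii).} Conditional on $(X_1,\dots,X_m,Y_1,\dots,Y_n)$, the bootstrap sample $Z_1,\dots,Z_N$ is i.i.d.\ from the pooled empirical measure $H_N := (m/N)P_m + (n/N)Q_n$, so $P_m^B$ and $Q_n^B$ are conditionally independent empirical measures of draws from $H_N$. Since $H = \lambda P + (1-\lambda)Q$ satisfies $\sum_j M_j H(I_j)^{1/2} \le \sum_j M_j P(I_j)^{1/2} + \sum_j M_j Q(I_j)^{1/2} < \infty$, $\cF_\sigma$ is $H$-Donsker by Theorem~\ref{thm: CLT for W1}. Applying the bootstrap empirical-process CLT (Theorem~3.6.1 in \cite{VaWe1996}) to each of the two conditionally independent bootstrap empirical processes and mimicking the decomposition of part~(i) with $H_N$ in place of $P=Q$, we obtain $\law(W_{m,n}^B \mid X_1,\dots,Y_n) \stackrel{w}{\to} L_H^{\mspace{1mu}\sigma}$ a.s. When $P=Q=H$, Lemma~\ref{lem: limit variable} ensures $L_P^{\mspace{1mu}\sigma}$ has a continuous CDF, so Pólya's theorem, applied to the random conditional CDFs of $W_{m,n}^B$, upgrades the a.s.\ weak convergence to the uniform Kolmogorov bound. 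The main obstacle is extending the bootstrap Donsker theorem from a fixed probability measure to the data-dependent centering $H_N$; this is handled by observing that the bracketing bounds for $\cF_\sigma|_{I_j}$ derived in the proof of Theorem~\ref{thm: CLT for W1} depend continuously on the measure, together with a tail-truncation argument using $\sum_j M_j H_N(I_j)^{1/2} \le \sum_j M_j P_m(I_j)^{1/2} + \sum_j M_j Q_n(I_j)^{1/2}$, which is a.s.\ eventually bounded on account of the moment hypotheses on $P$ and $Q$.
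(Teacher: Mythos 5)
Part (i) of your argument is correct and is essentially the content of the two-sample CLT the paper cites from Chapter 3.7 of van der Vaart--Wellner: the decomposition of $\sqrt{mn/N}(P_m - Q_n)$, the joint weak convergence of two independent empirical processes to independent Gaussian limits, and the variance bookkeeping $(1-\lambda)+\lambda = 1$ are exactly how that result is established, and the a.s.\ divergence argument for $P \ne Q$ is also fine. The paper simply invokes the cited theorem without unpacking it.

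Part (ii) has a genuine gap. You propose to apply the one-sample bootstrap CLT (Theorem 3.6.1 in van der Vaart--Wellner) to $\sqrt{m}(P_m^B - H_N)$ and $\sqrt{n}(Q_n^B - H_N)$ separately, treating $H_N$ as the empirical measure being resampled. But Theorem 3.6.1 presupposes that the underlying data are an i.i.d.\ sample from a single fixed population; here the pooled data $X_1,\dots,X_m,Y_1,\dots,Y_n$ are two independent blocks from $P$ and $Q$, so $H_N$ is not the empirical measure of an i.i.d.\ draw from $H$, and the theorem does not apply as stated. You flag this as the ``main obstacle'' yourself, but the proposed fix --- a continuity-in-the-measure argument for the bracketing bounds plus the bound $\sum_j M_j H_N(I_j)^{1/2} \le \sum_j M_j P_m(I_j)^{1/2} + \sum_j M_j Q_n(I_j)^{1/2}$ --- is not carried out; in particular the assertion that this random sum is a.s.\ eventually bounded is stated without justification, and even granting it, the sketch does not amount to a proof. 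Making it rigorous would essentially reconstruct the pooled two-sample bootstrap theorem. The paper's route is to invoke Theorem 3.7.7 of van der Vaart--Wellner, which is tailored precisely to the pooled bootstrap: it yields conditional a.s.\ convergence of the two-sample bootstrap process to $G_H$ once $\cF_\sigma$ is $H$-Donsker with a square-integrable envelope, and both follow from Theorem~\ref{thm: CLT for W1} applied to $H$ via $\sqrt{\lambda a+(1-\lambda)b}\le\sqrt{a}+\sqrt{b}$. Your final step (P\'olya's theorem combined with Lemma~\ref{lem: limit variable} for continuity of the CDF of $L_P^{\mspace{1mu}\sigma}$) is correct and matches the paper.
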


Evidently, the $W_{m,n}$ statistic diverges when $P\neq Q$. Thus, to treat the alternative case (i.e., when distributions are different), consider the centered statistic 
\[\overline{W}_{m,n} := \sqrt{\frac{mn}{m+n}} \big(\gwass(P_n,Q_m) - \gwass(P,Q)\big).\]
Theorem 6.1 of \citet{carcamo2020directional} uses directional differentiability of the supremum norm and the functional delta method to derive limit distributions for integral probability metrics (IPMs) \citep{zolotarev1983probability,muller1997integral} over Donsker classes. The $\overline{W}_{m,n}$ statistic satisfies this condition, since, up to centering, it is an IPM over $\cF_\sigma$, which is $P$- and $Q$-Donsker under appropriate moment conditions. This leads to the following result.

\begin{corollary}[Two-sample limit distribution under the alternative]
\label{cor: 2 sample limit alt}
Assume the conditions of Corollary \ref{cor: 2 sample limit} and set
$\mathsf{d}_\sigma(f,g):= \sqrt{P(f\ast\varphi_\sigma-g\ast\varphi_\sigma)^2} + \sqrt{Q(f\ast\varphi_\sigma-g\ast\varphi_\sigma)^2}$. Then $(\mathsf{Lip}_{1,0},\mathsf{d}_\sigma)$ is a totally bounded pseudometric space, and we have
\be\label{eq: two sample limit alt}\overline{W}_{m,n} \stackrel{d}{\to} \|G_{P,Q,\lambda}^{\mspace{1mu}\sigma}\|_{\bar{M}_\sigma} =: L^{\mspace{1mu}\sigma}_{P,Q,\lambda},\ee 
where $G_{P,Q,\lambda}^{\mspace{1mu}\sigma} = \sqrt{1-\lambda}\,\tilde G_P^{\mspace{1mu}\sigma} - \sqrt{\lambda}\,\tilde G_Q^{\mspace{1mu}\sigma}$ and
\[
\bar{M}_\sigma := \Big\{\bar{f} \in \overline{\mathsf{Lip}}_{1,0}:\, (P - Q)\,\bar{f}\ast\varphi_\sigma = \sup\nolimits_{f\in\mathsf{Lip}_{1,0}} (P - Q)\,f\ast\varphi_\sigma\Big\}.
\]
Here $\overline{\mathsf{Lip}}_{1,0}$ is the $\mathsf{d}_\sigma$-completion of $\mathsf{Lip}_{1,0}$, and $\tilde G_P^{\mspace{1mu}\sigma}$, $\tilde G_Q^{\mspace{1mu}\sigma}$ are the extensions of $G_P^{\mspace{1mu}\sigma}$, $G_Q^{\mspace{1mu}\sigma}$ to the completion, respectively.
\end{corollary}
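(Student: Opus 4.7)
The plan is to view $\gwass(P,Q) = \sup_{f\in\mathsf{Lip}_{1,0}} (P-Q)(f\ast\varphi_\sigma) = \phi(z_{P,Q})$ as the sup-norm functional $\phi(z) := \sup_f z(f)$ on $\ell^\infty(\mathsf{Lip}_{1,0})$ applied to the process $z_{\mu,\nu}(f) := (\mu-\nu)(f\ast\varphi_\sigma)$, to establish joint weak convergence of the empirical analogue $z_{P_m,Q_n}$, and then to invoke \cite[Theorem~6.1]{carcamo2020directional} (Hadamard directional differentiability of $\phi$ combined with the functional delta method). A preliminary step is to verify total boundedness of $(\mathsf{Lip}_{1,0},d_\sigma)$: this is both a hypothesis of the delta method and guarantees that $\overline{\mathsf{Lip}}_{1,0}$ is compact, so that the maximizer set $\bar M^+(D,d_\sigma)$ is nonempty.

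For total boundedness, Theorem~\ref{thm: CLT for W1} applied to $P$ and to $Q$ (both satisfy \eqref{eq: condition W1}) shows that $\cF_\sigma$ is both $P$- and $Q$-Donsker. Any Donsker class is totally bounded in its intrinsic $L^2$-pseudometric, so $\cF_\sigma$ is totally bounded under $\rho_P(f,g) := \sqrt{P(f\ast\varphi_\sigma - g\ast\varphi_\sigma)^2}$ and under the analogous $\rho_Q$. Transporting through $f \mapsto f\ast\varphi_\sigma$ gives total boundedness of $(\mathsf{Lip}_{1,0},\rho_P)$ and $(\mathsf{Lip}_{1,0},\rho_Q)$, and a standard refinement argument (cover each $\rho_P$-ball of radius $\epsilon/2$ by finitely many $\rho_Q$-balls of radius $\epsilon/2$) yields total boundedness of $d_\sigma = \rho_P + \rho_Q$. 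By the same theorem, $\sqrt{m}(P_m - P) \stackrel{w}{\to} G_P^{\mspace{1mu}\sigma}$ and $\sqrt{n}(Q_n - Q) \stackrel{w}{\to} G_Q^{\mspace{1mu}\sigma}$ in $\ell^\infty(\mathsf{Lip}_{1,0})$ as tight Gaussian processes. Independence of the samples together with $m/N \to \lambda$ then give
\[\sqrt{\tfrac{mn}{N}}\big((P_m - Q_n) - (P-Q)\big) = \sqrt{\tfrac{n}{N}}\,\sqrt{m}(P_m - P) - \sqrt{\tfrac{m}{N}}\,\sqrt{n}(Q_n - Q) \stackrel{w}{\to} \sqrt{1-\lambda}\,G_P^{\mspace{1mu}\sigma} - \sqrt{\lambda}\,G_Q^{\mspace{1mu}\sigma},\]
and since $\Var(G_P^{\mspace{1mu}\sigma}(f) - G_P^{\mspace{1mu}\sigma}(g)) \leq \rho_P(f,g)^2 \leq d_\sigma(f,g)^2$ (and similarly for $Q$), the tight versions of $G_P^{\mspace{1mu}\sigma}, G_Q^{\mspace{1mu}\sigma}$ have $d_\sigma$-uniformly continuous sample paths and extend uniquely to $\overline{\mathsf{Lip}}_{1,0}$, producing $\tilde G_P^{\mspace{1mu}\sigma}, \tilde G_Q^{\mspace{1mu}\sigma}$ and hence $G_{P,Q,\lambda}^{\mspace{1mu}\sigma}$ as in the statement.

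Finally, because $|z_{P,Q}(f) - z_{P,Q}(g)| \leq \rho_P(f,g) + \rho_Q(f,g) = d_\sigma(f,g)$, the functional $z_0 := z_{P,Q}$ extends continuously to the compact set $\overline{\mathsf{Lip}}_{1,0}$ and attains its maximum there, whence $\bar M^+(D,d_\sigma)$ is nonempty. By \cite[Theorem~6.1]{carcamo2020directional}, $\phi$ is Hadamard directionally differentiable at $z_0$ tangentially to the space of $d_\sigma$-uniformly continuous functions on $\mathsf{Lip}_{1,0}$, with derivative $\phi'_{z_0}(h) = \sup_{f \in \bar M^+(D,d_\sigma)} \bar h(f)$, where $\bar h$ denotes the unique continuous extension of $h$. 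Combining this with the weak convergence above via the functional delta method delivers \eqref{eq: two sample limit alt}. The main technical obstacle is verifying that the tight limit is concentrated on the correct tangent space of $d_\sigma$-uniformly continuous functions on $\mathsf{Lip}_{1,0}$ (rather than merely on $\ell^\infty(\mathsf{Lip}_{1,0})$) and that the Hadamard differentiability of $\phi$ holds tangentially to that subspace with the stated derivative formula; both follow from total boundedness and the variance-domination estimate above, but require careful bookkeeping of the topologies involved.
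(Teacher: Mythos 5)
Your proposal is correct and takes essentially the same route as the paper: the paper's proof is a one-line appeal to Theorem 6.1 of \cite{carcamo2020directional}, justified by the remark that $\cF_\sigma$ is $P$- and $Q$-Donsker under the hypotheses of Corollary~\ref{cor: 2 sample limit}. You have filled in the intermediate steps (total boundedness of $(\mathsf{Lip}_{1,0},d_\sigma)$ from the Donsker property, the two-sample weak convergence, the $d_\sigma$-uniform continuity and extension of the limiting Gaussian processes, and the Hadamard directional differentiability of the sup functional) that the paper leaves implicit.
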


This corollary follows from an application of Theorem 6.1 in \citet{carcamo2020directional}.

\begin{remark}[Relation between Corollaries \ref{cor: 2 sample limit} and \ref{cor: 2 sample limit alt}]
Under $P=Q$, $\tilde G_P^{\mspace{1mu}\sigma}$ and $\tilde G_Q^{\mspace{1mu}\sigma}$ are independent copies of the same Gaussian process, $\mathsf{d}_\sigma(f,g) = 2 \mathsf{d}_P(f,g) := 2 \|f\ast\gauss - g\ast\gauss\|_{L^2(P)}$, and $\bar{M}_\sigma = \overline{\mathsf{Lip}}_{1,0}$. In this case, the distribution of $G_{P,Q,\lambda}^\sigma$ above coincides with the limit distribution found in Corollary~\ref{cor: 2 sample limit}. When $P\neq Q$, Corollary \ref{cor: 2 sample limit} shows that the bootstrap distribution of $W^B_{m,n}$ (a symmetrized bootstrap version of $W_{m,n}$) consistently estimates the null limit distribution. For testing $H_0:\,P=Q$ against the alternative $H_1:\,P\neq Q$, the quantiles of $W^B_{m,n}$ can be used to choose critical values for the test. In contrast, Corollary \ref{cor: 2 sample limit alt} may be used to find confidence intervals for $W_1^\sigma(P,Q)$ using samples from the two distributions $P$, $Q$
.
\end{remark}

\begin{remark}[Uniqueness of potentials] 
\label{rem:unique_potential} If the set $\bar{M}_\sigma$ of optimal potentials for $P,Q$ is a singleton up to additive constants (i.e., the optimal potential is unique), then the limit distribution in \eqref{eq: two sample limit alt} is a univariate Gaussian. However, no such uniqueness result is known for $\wass$ in general, even for smooth measures, since the underlying cost function is not strictly convex. Recent results on uniqueness of Kantorovich potentials are given in \citet{bernton2021entropic,staudt2022uniqueness}. Theorem 2 of \citet{staudt2022uniqueness} shows uniqueness of potentials under certain regularity conditions on the optimal coupling and the cost. However $c_1(x,y) := \|x - y\|$ is non differentiable at $x = y$, which violates Condition (3) of their result when, for instance, $P$ has a density that is non-zero everywhere and $\supp(Q)$ is compact with non-empty interior. Theorem B.2 of \citet{bernton2021entropic} also requires differentiability of the cost and superlinear growth of $c(x,y)=h(x - y)$, neither of which holds for $c_1$.
\end{remark}

We next consider the bootstrap analog of the centered statistic $\overline{W}_{m,n}$. This is motivated by the fact that the distribution of the limit random variable $L^{\mspace{1mu}\sigma}_{P,Q,\lambda}$ under the alternative is non-pivotal in the sense that it depends on the population distributions $P$ and $Q$, which are unknown in practice. However, the naive bootstrap is inconsistent in estimating the alternative limit distribution in general. To state the result, let  $\tilde{P}_n^B, \tilde{Q}_m^B$ be bootstrap samples from $X_1, \dots, X_n$ and $Y_1, \dots, Y_m$, respectively, and define the bootstrap estimate
\[
\overline{W}_{m,n}^B = \sqrt{\frac{mn}{m+n}} \left ( \gwass (\tilde{P}_n^B, \tilde{Q}_m^B) - \gwass(P_n, Q_m) \right ).
\]

\begin{proposition}[Bootstrap inconsistency under the alternative]
\label{prop: two_sample_bootstrap_alt}
Under the conditions of Corollary~\ref{cor: 2 sample limit alt}, we have the following unconditional limit of $\bar{W}_{m,n}^B$.
\[
\law \left ( \overline{W}_{m,n}^B \right ) \overset{w}{\to} \law \left ( \| G^{\mspace{1mu}\sigma}_{P,Q,\lambda} + \tilde{G}^{\mspace{1mu}\sigma}_{P,Q,\lambda}\|_{\bar{M}_\sigma} - \| G^{\mspace{1mu}\sigma}_{P,Q,\lambda}\|_{\bar{M}_\sigma} \right ),
\]
where $G^\sigma_{P,Q,\lambda}$ is given in \eqref{eq: two sample limit alt} and $\tilde{G}^\sigma_{P,Q,\lambda}$ is an independent copy thereof.
\end{proposition}

The proof of Proposition~\ref{prop: two_sample_bootstrap_alt} is given in Appendix~\ref{subsec: two_sample_bootstrap_alt}. The unconditional limit in Proposition~\ref{prop: two_sample_bootstrap_alt} is incompatible with the conditional limit distribution in Corollary~\ref{cor: 2 sample limit alt} unless the Gaussian process $G_{P,Q,\lambda}^\sigma$ is a.s. constant on $\bar{M}_\sigma$. A sufficient condition that ensures this, and consequently, consistency of the naive bootstrap, is when the set of optimal potentials $\bar M_\sigma$ is a singleton up to an additive constant. However, conditions to ensure this are not known in general (cf. Remark~\ref{rem:unique_potential}).

Even if the naive bootstrap is inconsistent, the alternative limit distribution can be estimated via the rescaled bootstrap scheme specified in \cite{Dumbgen1993}. We describe below an alternative approach, based on subsampling \citep{politis2008k}, to estimate the quantiles of the alternative limit distribution and obtain confidence intervals for $\gwass(P,Q)$ from samples. 

To describe the subsampling scheme, first let $1 \leq a_n \leq n$, $1 \leq b_m \leq m$ be sequences of positive integers, and set $r_{m,n}:= \sqrt{mn/(m + n)}$. Take
\[A_n\sim \mathrm{Unif}\binom{\{1,.\ldots,n\}}{a_n}\quad ;\quad B_m\sim \mathrm{Unif}\binom{\{1,.\ldots,m\}}{b_m},\]
where $\binom{S}{k}$ designates the set of all $k$-subsets of a set $S$. Define  $P_n^\mathsf{Sub}:=a_n^{-1}\sum_{i\in A_n}X_i$ and $Q_m^\mathsf{Sub}:=b_m^{-1}\sum_{i\in B_m}Y_i$ as the empirical measures of the subsamples. Lastly, let $J_{m,n}(x)$ be the CDF of $\law \big ( r_{a_n, b_m} \big( \gwass(P_n^\mathsf{Sub}, Q_m^\mathsf{Sub}) - \gwass(P_n, Q_m) \big) \,\big|\,X_1, \dots, X_n, Y_1, \dots, Y_m \big )$, and $J_{\lambda}(x)$ be the CDF of $\law \big ( L^{\mspace{1mu}\sigma}_{P,Q,\lambda} \big )$. Then, $J_{m,n}$ acts as the subsampling estimator for $J_\lambda$, and the following result shows that it is a consistent estimator (under both $H_0$ and $H_1$).


\begin{lemma}[Consistency of subsampling]
\label{lem: subsample_consistency_H1}
Assume the conditions of Corollary \ref{cor: 2 sample limit alt}  
and suppose that $a_n \vee b_m \to \infty$, $a_n/(a_n + b_m) \to \lambda$, $a_n/n \wedge b_m/m \to 0$, and $r_{a_n,b_m}/r_{m,n} \to 0$. Then $J_{m,n}(x) \convp J_\lambda(x)$ for every continuity point $x$ of $J_\lambda$.
\end{lemma}

This lemma follows from a modification of \citet[Theorem 3.1]{politis2008k}, so as to account for the dependence of the limit distribution on $\lambda = \lim_{m \vee n \to \infty} n/(m+n)$. Nevertheless, the proof is all but identical, and is therefore skipped.

\medskip
As a consequence of Lemma \ref{lem: subsample_consistency_H1}, we obtain confidence intervals for the SWD.

\begin{corollary}[Consistent confidence intervals under the alternative]
Under the\\ conditions of Lemma~\ref{lem: subsample_consistency_H1}, an asymptotic $(1-\alpha)$ confidence set for $\gwass(P,Q)$ is given by
\[
\left[ \gwass(P_n, Q_m) + \frac{J_{m,n}^{-1}(\alpha/2)}{r_{m,n}},\  \gwass(P_n, Q_m) + \frac{J_{m,n}^{-1}(1 - \alpha/2)}{r_{m,n}} \right ].
\]
\end{corollary}


\section{Berry-Esseen Type Bounds for Vanishing Smoothing Parameter}
\label{sec: Berry-Esseen type results}

Given a limit distribution theory, Berry-Esseen type bounds quantify the accuracy of the distributional approximation. The classic Berry-Esseen theorem fulfills this role for the CLT, specifying the rate at which the sample mean CDF (of standardized i.i.d. samples) converges towards the standard normal distribution in the sup-norm \citep{berry1941accuracy,esseen1942liapunov}. Specifically, let $F_n(x) := \PP \big(n^{-1/2}\sum_{i=1}^n \xi_i \le x\big)$ for i.i.d. $\xi_1,\dots,\xi_n$ with mean zero and variance one, and take $\Phi$ as the CDF of $\cN_1$. The Berry-Esseen theorem yields
\[\sup_{x \in \R} \big|F_n(x) - \Phi(x)\big| \leq \frac{C \EE\big[|\xi_1|^3\big]}{\sqrt{n}},\]
where the constant $C$ above was refined multiple times throughout the years (see \citet{shevtsova2011absolute} for a review and a recent proof). 
This result has seen many extensions and generalizations, e.g., to Banach spaces \citep{rhee1986uniform} and similar approximation bounds for empirical and bootstrap processes over bounded and/or VC-type function classes \citep{massart1989strong,ChChKa2014,ChChKa2016}.

In this section, we derive Berry-Esseen type bounds for finite-sample approximations of the limiting distribution from Theorem \ref{thm: CLT for W1}, by either the (scaled) empirical SWD or its bootstrap analogue. 
Motivated by noise annealing techniques used in machine learning, our results allow the smoothing parameter $\sigma$ to vanishes as $n \to \infty$ at a sufficiently slow rate. Recalling that $\big|\gwass(P,Q)-\wass(P,Q)\big|\leq 2\sigma\sqrt{d}$, the vanishing $\sigma$ analysis enables viewing the SWD as a proxy of classic $\wass$. The discrepancy between the limit distribution and its non-asymptotic approximation is measured by the Prokhorov distance, given~by
\[\rho(P,Q):= \inf\big\{\epsilon > 0\ :\ P(A) \leq Q(A^\epsilon) + \epsilon\,,\ \forall A \in \cB(\R^d)\big\}.\]
With some abuse of notation, we write $\rho(X,Y)$ instead of $\rho(P,Q)$, for $X\sim P$ and $Y\sim Q$.


\medskip 

We first consider approximating 
the limiting variable $L_P^{\mspace{1mu}\sigma}$ from Theorem \ref{thm: CLT for W1} by the empirical SWD $\sqrt{n}\gwassemp$, for small $\sigma$ values. To simplify notation, we introduce the following quantities: for $d\geq 2$ and any $\alpha > d/2 $, define $a := \alpha - 1,\,b := d/(2\alpha)$.

\begin{theorem}[Berry-Esseen type bound]
\label{thm: SWD small sigma Berry-Esseen type rate}
Suppose $P\|x\|_\infty^{l} = M< \infty$ for some $l > 4\alpha-d+3$ and let $n^{-p} \leq \sigma_n \le 1$ for some $p < \frac{1-b}{8ab}$. Then:
\begin{enumerate}[i),leftmargin = 1.3\parindent]
    \item there exist versions $V_n$ and $W_n$ of $L^{\mspace{1mu}\sigma_n}_P$ and $\sqrt{n}\gwassempsmall$, respectively, 
    such that for any sequence $r_n \gg n^{-(1-b)/\left(2(3+b)\right)}\sigma_n^{- 4ab/(3+b)}$, we have $V_n - W_n = o_{\PP}(r_n)$;
    \item the Prokhorov distance is bounded as $\rho\big(\sqrt{n}\gwassempsmall,\,L_P^{\mspace{1mu}\sigma_n}\big) \lesssim_{\alpha,d,M,l} n^{-(1-b)/8} \sigma_n^{-ab}$.
\end{enumerate}
\end{theorem}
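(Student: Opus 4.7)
The plan is to construct a strong Gaussian approximation (coupling) for the smoothed empirical process $(P_n-P)(f*\gauss)$ indexed by $f\in\mathsf{Lip}_{1,0}$, and then deduce the Berry--Esseen type bound via the KR-dual representation $\sqrt{n}\gwassempsmall=\sup_{f\in\mathsf{Lip}_{1,0}}\sqrt{n}(P_n-P)(f*\gauss)$. The coupling will be of Chernozhukov--Chetverikov--Kato \cite{ChChKa2014,ChChKa2016} (CCK) type for suprema of empirical processes over function classes with controlled bracketing entropy, combined with a truncation step to handle the polynomially-growing envelope of $\cF_{\sigma_n}:=\{f*\gauss:f\in\mathsf{Lip}_{1,0}\}$.

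The first step is to bound the bracketing entropy of the truncated class. Since convolution with $\gauss$ makes $f*\gauss$ infinitely differentiable with partial derivatives of order $\alpha$ bounded by a constant multiple of $\sigma_n^{-(\alpha-1)}=\sigma_n^{-a}$, standard results for H\"older-smooth function classes on bounded domains (e.g., \cite[Theorem 2.7.1]{VaWe1996}) yield
\[
\log N_{[\,]}\bigl(\epsilon,\cF_{\sigma_n}|_{[-R_n,R_n]^d},L^2(P)\bigr)\lesssim_{\alpha,d}\bigl(R_n^{d}\sigma_n^{-ad/\alpha}/\epsilon\bigr)^{d/\alpha}.
\]
The exponent $d/\alpha=2b$ is strictly less than $2$ (since $\alpha>d/2$), so the entropy integral converges. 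Second, the moment hypothesis $P\|x\|_\infty^l<\infty$ with $l>4\alpha-d+3$ controls the tail contribution: via Markov's inequality, the discrepancy between $\sqrt{n}\gwassempsmall$ and the supremum over the truncated domain is a polynomially decreasing function of $R_n$ (with the gap modulated by $\sigma_n\sqrt{d}$).

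Third, I would apply the CCK Gaussian approximation to the truncated empirical process, producing on a rich probability space a pair whose difference in supremum norm is controlled by a function of $n$, the envelope size $R_n$, and the entropy prefactor $\sigma_n^{-a}$. Combining the resulting coupling error with the truncation remainder yields a total error of schematic form $n^{-\gamma_1}\sigma_n^{-\gamma_2}R_n^{\gamma_3}+\sqrt{n}\,R_n^{-(l-1)}$; balancing the two terms by choosing $R_n\asymp n^{\beta}\sigma_n^{-\beta'}$ for the appropriate $\beta,\beta'$ produces the claimed rate $r_n\gg n^{-(1-b)/(2(3+b))}\sigma_n^{-4ab/(3+b)}$ in part (i), the moment threshold $l>4\alpha-d+3$ being exactly what is needed to absorb the truncation residual. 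The hypothesis $p<(1-b)/(8ab)$ on the decay of $\sigma_n$ then guarantees that $r_n=o(1)$, so that the coupling is informative.

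For part (ii), the Prokhorov bound follows by combining the coupling of (i) with the elementary estimate $\rho(\law(V),\law(W))\leq \inf\{\epsilon>0:\PP(|V-W|>\epsilon)\leq\epsilon\}$; the sharper rate $n^{-(1-b)/8}\sigma_n^{-ab}=r_n^{(3+b)/4}$ emerges after balancing the explicit tail control of the coupling error from step three against $\epsilon$ itself. The main technical obstacle is the interaction between the growing truncation radius $R_n$ and the shrinking smoothing parameter $\sigma_n$: the entropy bound is polynomial in $\sigma_n^{-1}$ and in $R_n^{d}$, so the CCK Gaussian approximation deteriorates in the small-$\sigma_n$, heavy-tail regime, and maintaining a nontrivial coupling rate requires the precise interplay of the constraints $p<(1-b)/(8ab)$ and $l>4\alpha-d+3$. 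Carrying out this bookkeeping while keeping the exponents sharp is the principal difficulty.
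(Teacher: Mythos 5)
Your high-level plan (KR duality, the smoothed class $\cF_\sigma$, a Chernozhukov--Chetverikov--Kato-style Gaussian comparison) is the right strategy and matches the paper's in spirit. But there are concrete gaps in the proposed route. First, you want to truncate the domain to $[-R_n,R_n]^d$ and then ``apply the CCK Gaussian approximation''; this is where the plan breaks. The class $\cF_{\sigma_n}$ -- even restricted to a cube -- is a H\"older ball with entropy $\log N \asymp \delta^{-d/\alpha}$, which is polynomial rather than logarithmic, i.e.\ not VC-type, and CCK's off-the-shelf strong-approximation theorems do not apply to it (the paper says exactly this: ``Since our function class is not VC type, a direct application of their results is not possible''). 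The paper therefore does not invoke a coupling theorem; it discretizes $\cF_\sigma$ at scale $\delta$ in $L^2(P)$, applies a finite-dimensional Gaussian comparison to the discretized maximum (Lemma~\ref{lem: gauss_approx_discretized}), and only then constructs the coupling via Strassen's theorem. Relatedly, the paper never truncates the domain at all: it bounds $\log N_{[\,]}(\delta,\cF_\sigma,L^2(P))$ directly on the whole of $\R^d$ via the unit-cube partition lemma (Lemma~\ref{lem:partition_entropy}), in which the $P(I_j)$-weights absorb the moment condition. Your truncation gives sup-norm entropy that inflates by an extra factor polynomial in $R_n$ (beyond what the $L^2(P)$ bound costs), so the $R_n$-balancing would not reproduce the same exponents; and your stated entropy bound $(R_n^d\sigma_n^{-ad/\alpha}/\epsilon)^{d/\alpha}$ is also not what Theorem 2.7.1 of van der Vaart--Wellner yields.

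Second, the moment threshold $l>4\alpha-d+3$ is misattributed. It is not ``exactly what absorbs a truncation residual'' -- there is no truncation residual -- it arises in part (ii) when the discretization error $\|\GG_n\|_{\cF_\sigma(\delta)}$ is controlled not by Markov's inequality (as in part (i)) but by the Fuk--Nagaev inequality with $p=l$ (Lemma~\ref{lem: fuk-nagev}), together with Borel--Sudakov--Tsirelson for the Gaussian discretization error; the exponent $4\alpha-d+3$ appears precisely in the polynomial-tail term of that bound. This is also why part (ii) achieves the sharper rate $n^{-(1-b)/8}\sigma_n^{-ab}$ rather than $r_n$: your plan of ``combining the coupling of (i) with the elementary Prokhorov estimate'' would only recover $\rho \lesssim n^{-(1-b)/(2(3+b))}\sigma_n^{-4ab/(3+b)}$, which is strictly weaker since $(3+b)/4>1$. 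The paper re-optimizes $(\delta_n,\eta_n)$ after replacing Markov with the concentration inequalities, and that is where the improved exponent comes from. You did correctly identify the algebraic identity $n^{-(1-b)/8}\sigma_n^{-ab}=r_n^{(3+b)/4}$ and the role of $p<(1-b)/(8ab)$ in forcing the bound to vanish, but the mechanism producing the rate in part (ii) is the concentration step, not a rebalancing of part (i).
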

The proof of Theorem \ref{thm: SWD small sigma Berry-Esseen type rate} leverages the methodology of \citet{ChChKa2014} for approximating of the supremum of an empirical process over a VC-type class by that of a Gaussian process. Several adaptations of their technique are needed to account for the fact our function class is not VC-type.




\begin{remark}[Fast Berry-Esseen rates]
If $\alpha = 2d^2$, then $r_n\sim O(n^{-1/6 + \epsilon_d})\sigma_n^{-4(d^2 \mspace{-1mu} - \mspace{-1mu} 1)}$,
where $\epsilon_d$ is small for large $d$. Hence, if for instance, $\sigma_n$ decays at an inverse logarithmic rate and $d$ is sufficiently large, we have $r_n = o(n^{-1/6 + \epsilon})$ for some small $\epsilon > 0$. Consequently, when the population distribution $P$ has sufficiently high moments (say, it is subexponential), and $\sigma_n$ is appropriately chosen, we can obtain dimension free Berry-Esseen rates, in the sense  $n$ and $d$ are decoupled (although $d$ can still enter through the constants). This is~similar to the rate found in \citet{ChChKa2014}, for smaller (VC-type) function classes.
\end{remark}

\begin{remark}[Other distances]
With further restriction on $P$ (e.g., anti-concentration assumptions akin to \citet{chernozhukov2014anti}), the bound in Claim (ii) can be derived under a metric stronger than Prokhorov. However, such an analysis is quite technical and does not shed new light on the problem, and so we decided to omit it.
\end{remark}

In Corollary \ref{cor: bootstrap}, it was shown that the empirical bootstrap is consistent in estimating the limiting distribution of SWD. The next result provides Berry-Esseen rates for the bootstrap estimate of $L_P^{\mspace{1mu}\sigma}$, under slightly stronger moment conditions.


We establish the following analogue of Theorem \ref{thm: SWD small sigma Berry-Esseen type rate} for the empirical bootstrap.

\begin{theorem}[Berry-Esseen type bound for bootstrap]
\label{thm: Empirical Bootstrap Berry-Esseen type rate}
Under the same conditions as Theorem~\ref{thm: SWD small sigma Berry-Esseen type rate},
\begin{enumerate}[i)]
    \item there exist random variables $W_n$ and $V_n$ such that 
    \[
    \law(W_n|X_{1:n}) \mspace{-3mu} = \mspace{-3mu} \law\big(\sqrt{n}\gwasssmall(P_n^B,P_n)\big|X_{1:n}\big)
    \]
    and $\law(V_n|X_{1:n}) \mspace{-3mu} = \mspace{-3mu} \law\big(L_P^{\mspace{1mu}\sigma_n}\big)$, and for any sequence $r_n \gg n^{-(1-b)/(2(3+b))} \sigma_n^{-a}$, we have
    $    
    |V_n - W_n| = o_{\PP}(r_n)$.
    \item the Prokhorov distance is bounded as
    \[\rho\big(\law\left(\sqrt{n}\gwasssmall(P_n^B,P_n)\middle|\,X_{1:n}\right), \law\left(L_P^{\sigma_n}\right)\big) = O_\PP\big(n^{-\frac{1-b}{4(3+b)}}\sigma_n^{-a/2}\big).
    \]
\end{enumerate}
\end{theorem}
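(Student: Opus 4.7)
The plan is to build on the Gaussian approximation strategy underlying Theorem~\ref{thm: SWD small sigma Berry-Esseen type rate}, now through two layers: first couple the bootstrap process $\GG_n^*$ to a centered Gaussian process $G_{P_n}^{\mspace{1mu}\sigma_n}$ on $\cF_{\sigma_n}$ with empirical covariance $\Cov_{P_n}(f\ast\varphi_{\sigma_n},g\ast\varphi_{\sigma_n})$, and then bridge $G_{P_n}^{\mspace{1mu}\sigma_n}$ to the target Gaussian $G_P^{\mspace{1mu}\sigma_n}$ via a covariance-perturbation step. The triangle inequality combined with continuous mapping for the supremum functional will then yield the coupling in claim (i).

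For the first stage, I would apply a conditional Koltchinskii/Yurinskii-type strong approximation to the bootstrap process given $X_{1:n}$. The envelope bound $|f\ast\varphi_{\sigma_n}(x)|\le\|x\|+\sigma_n\sqrt{d}$ and the metric entropy control for $\cF_{\sigma_n}$ derived while proving Theorem~\ref{thm: CLT for W1} supply the ingredients needed by the coupling inequality, and the moment hypothesis $P\|x\|_{\infty}^l<\infty$ with $l>4\alpha-d+3$ controls the bootstrap envelope on an event of probability $1-o(1)$. Optimizing the trade-off between the entropy integral and the envelope tail gives a conditional coupling error of order $n^{-(1-b)/(2(3+b))}\sigma_n^{-a}$, matching the target rate $r_n$ in (i).

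For the second stage, I would control the covariance discrepancy $\sup_{f,g\in\cF_{\sigma_n}}|(\Cov_{P_n}-\Cov_P)(f\ast\varphi_{\sigma_n},g\ast\varphi_{\sigma_n})|$ by an empirical process over the product class $\cF_{\sigma_n}\cdot\cF_{\sigma_n}$, whose entropy is at most twice that of $\cF_{\sigma_n}$. Together with the concentration inequalities from Proposition~\ref{prop: concentration}, this shows the discrepancy is of strictly smaller order than the first-stage error with probability $1-o(1)$. A Gaussian-to-Gaussian coupling---\emph{e.g.} via a Strassen-type construction paired with a Sudakov--Fernique/anti-concentration argument for the supremum functional---then transfers this covariance closeness into a coupling of $\sup G_{P_n}^{\mspace{1mu}\sigma_n}$ and $\sup G_P^{\mspace{1mu}\sigma_n}$ at a rate absorbed into $r_n$. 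Stacking the two stages produces (i).

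Claim (ii) follows from the elementary bound $\rho(\law X,\law Y)\le\delta$ whenever $\PP(|X-Y|>\delta)\le\delta$: choosing $\delta=\sqrt{r_n}$ and applying (i) conditionally on $X_{1:n}$ gives $\rho\big(\law(\|\GG_n^*\|_{\cF_{\sigma_n}}|X_{1:n}),\law(L_P^{\mspace{1mu}\sigma_n})\big)=O_{\PP}\big(n^{-(1-b)/(4(3+b))}\sigma_n^{-a/2}\big)$. The main obstacle is the first stage: since $\cF_{\sigma_n}$ is not VC-type, the off-the-shelf CCK coupling inequalities do not apply, and one must carefully track the polynomial-in-$\epsilon$ entropy and the explicit $\sigma_n$-dependence when optimizing the coupling, which yields the slightly worse $\sigma_n^{-a}$ factor relative to $\sigma_n^{-4ab/(3+b)}$ appearing in Theorem~\ref{thm: SWD small sigma Berry-Esseen type rate} (indeed $\sigma_n^{-a}\geq \sigma_n^{-4ab/(3+b)}$ since $b<1$).
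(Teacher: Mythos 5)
Your high-level two-stage plan---couple $\GG_n^*$ to a Gaussian process with the \emph{empirical} covariance, then compare that Gaussian to the target $G_P^{\mspace{1mu}\sigma_n}$ via a covariance-perturbation argument---is the same skeleton the paper uses: the multiplier bootstrap $\GG_n^B$ is, conditionally on $X_{1:n}$, exactly the intermediate Gaussian with covariance $\Cov_{P_n}$. However, several steps in your sketch are either not the mechanisms that actually close the gaps, or are claims that do not hold at the stated rates.

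First, your first-stage coupling via a ``conditional Koltchinskii/Yurinskii-type strong approximation'' is not something that is available here off-the-shelf, and you acknowledge but do not resolve the obstruction that $\cF_{\sigma_n}$ is not VC type. The paper sidesteps this entirely: it does not produce a process-level coupling at all, but first discretizes $\cF_{\sigma_n}$ to an $L^2(P)$ $\delta$-net, applies a \emph{finite-dimensional} high-dimensional CLT (the version of CCK stated as Lemma~\ref{lem: gauss_approx_discretized}) to the vector of coordinate maxima conditionally on $X_{1:n}$, controls the discretization errors separately, and only then uses a conditional Strassen device (Lemma~\ref{lem: Conditional Strassen's theorem}) to manufacture the coupling of claim (i). Your outline is missing the discretization layer, which is what makes the non-VC class tractable.

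Second, the claim that the covariance-discrepancy stage produces an error ``of strictly smaller order'' that is ``absorbed into $r_n$'' is not justified and is, in fact, not what happens. The covariance-comparison contribution (the paper's term (IV), bounded by $\eta^{-1}\sigma^{-a(1+b)}\delta^{-b}n^{-1/4}$ via the Gaussian comparison inequality in Lemma~\ref{lem: gaussian comparison} after controlling $\EE[\Delta(X_{1:n})]\lesssim\sigma^{-2a}n^{-1/2}$) is of the \emph{same} order as the other terms under the optimizing choices of $\delta_n,\eta_n$, and it is precisely this term that drives the worsening from $\sigma_n^{-4ab/(3+b)}$ to $\sigma_n^{-a}$. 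You attribute the worse $\sigma$-dependence to the first-stage coupling, which misdiagnoses the source of the rate loss. Relatedly, Sudakov--Fernique compares expectations of Gaussian suprema under covariance domination; it does not give a distributional or coupling comparison. The correct tool for converting covariance closeness into closeness of the laws of the maxima is a Gaussian comparison inequality of the CCK type (Lemma~\ref{lem: gaussian comparison}), applied again after discretization. Your proposed route from (i) to (ii) is fine, and your identification of the product class $\cF_{\sigma_n}\cdot\cF_{\sigma_n}$ for controlling $\Cov_{P_n}-\Cov_P$ matches the paper (though the relevant bound comes from the entropy estimates and maximal inequalities used in Theorem~\ref{thm: CLT for W1}, not from Proposition~\ref{prop: concentration}, which concerns $\gwass(P_n,P)$ rather than the product-class empirical process).
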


The empirical bootstrap process on $\cF_\sigma$ is defined as:
\be \GG_n^*(f) = \sqrt{n}(P_n^B - P_n)f,\ f \in \cF_\sigma. \ee
Recall that $P_n^B$ is the empirical measure formed from a bootstrap sample from $P_n$, as defined earlier. Then, $\sqrt{n}\gwass(P_n^B,P_n) = \|\GG_n^*\|_{\cF_\sigma}$. The proof of this theorem then follows along similar lines to those of Theorem \ref{thm: SWD small sigma Berry-Esseen type rate} above. The main difference is that conditional versions of the technical lemmas are needed. 
An outline of the argument is provided in Section \ref{SUBSEC: SWD Empirical Bootstrap proof outline}, with the detailed proof deferred to Appendix \ref{SUBSEC: SWD Empirical Bootstrap Berry-Esseen proof}.

\begin{remark}[Measurability and existence of versions]
The existence of versions $W_n$ and $V_n$ is guaranteed under the assumption that the underlying probability space is rich enough to contain a $\mathsf{U}[0,1]$ random variable independent of the sample observations (cf. \citet{ChChKa2016}, Page 3). This can be ensured by taking the product probability space $\prod_{i=1}^{\infty} \big(\R^{d}, \cB(\R^{d}),P\big)$ (in which the samples $X_1,X_2,\ldots$ are coordinate projections) and extending it as $(\Omega, \cA, \Prob) = \left [ \prod_{i=1}^{\infty} \big(\R^{d}, \cB(\R^{d}),P\big)\right] \times \big([0,1],\cB([0,1]), \mathrm{Leb}\big)$, where $\mathrm{Leb}$ denotes the Lebesgue measure on $[0,1]$.
\end{remark}


\section{Dependence on Intrinsic Dimension}
\label{sec: low dim case}

The rate of convergence of the empirical 1-Wasserstein distance $\wass(P_n,P)$ adapts to the intrinsic dimension of the population distribution \citep{dudley1969speed,boissard2014,niles2019estimation,weed2019sharp,lei2020, singh2018minimax}. 
For the SWD, while the rate is parametric irrespective of dimension, we next show that the dependence on the intrinsic dimension is captured by the Gaussian smoothing parameter~$\sigma$. 



\subsection{Empirical Convergence and Intrinsic Dimension}

The notion of intrinsic dimension used in the aforementioned papers and in this work employs covering numbers. The idea is to leverage the fact that the $\epsilon$-covering number of a set of intrinsic dimension $s$ will scale as $c\epsilon^{-s}$, where $c$ may depend on the size and the dimension of the ambient space. To state our intrinsic dimension definition, for any $P\in\cP(\RR^d)$, denote its $(\epsilon,\tau)$-covering number by
\[
N_\epsilon(P, \tau):= \inf\big\{N(\epsilon, S,\|\cdot\|)\ :\ S \in \cB(\R^d),\ P(S) \geq 1 - \tau\big\},
\]
and let $d_\epsilon(P, \tau) = -\frac{\log N_\epsilon(P,\tau)}{\log \epsilon}$ be the corresponding $(\epsilon,\tau)$-dimension.

\begin{definition}[Upper smooth 1-Wasserstein dimension]\label{DEF:gwass_dim}
For a probability measure $P \in \cP(\R^d)$, the upper smooth 1-Wasserstein dimension is defined as:
\[
\dim^*_{\mathsf{SW}_1}(P) := \inf\left\{s \in (2,\infty)\ :\ \limsup_{\epsilon \to 0} d_\epsilon\Big(P,\epsilon^{s^2/(s-2)}\Big) \leq s\right \}.
 \]
\end{definition}

The above definition is stricter than regular upper 1-Wasserstein dimension \citep{dudley1969speed,weed2019sharp} in the sense that for a given $\epsilon$, it considers sets that exclude a smaller probability mass (i.e., larger sets) for determining dimensionality. A detailed comparison between the two as well as relations to Minkowski dimension are provided in the next subsection. The stricter notion of dimensionality enables treating empirical convergence of distributions with unbounded support, as stated next.

\begin{theorem}[Empirical convergence and intrinsic dimension]
\label{thm: low dim rate for exp}
Let $P \mspace{-3mu} \in \mspace{-3mu} \cP(\R^d)$ be such that $P\|x\|^s < \infty$ and $\dim^*_{\mathsf{SW}_1}(P) < s$. Then, for all $n > 1$,
\[\EE\left[\sqrt{n}\gwassemp\right] \leq C \sigma^{-\frac{s}{2}+1}(\log n)^{\frac 32},\]
where $C$ is a constant independent of $\sigma$ and $n$.
\end{theorem}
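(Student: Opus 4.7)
The plan is to adapt the proof of Corollary~\ref{cor: sharp gwass rate}, replacing the bracketing-entropy estimate based on the ambient dimension $d$ with one that reflects the intrinsic dimension $s$. By KR duality, $\sqrt n\,\gwass(P_n,P)=\|\GG_n\|_{\cF_\sigma}$ with $\cF_\sigma=\{f\ast\gauss:f\in\mathsf{Lip}_{1,0}\}$, so it suffices to bound $\EE\|\GG_n\|_{\cF_\sigma}$. The strategy is to truncate to a set $S_\epsilon$ witnessing the low intrinsic dimension, control the mass outside via the moment assumption, and chain the bulk using an entropy bound in which $s$ replaces $d$ in the relevant exponent.

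For the truncation step, the hypothesis $\dim^{\ast}_{\mathsf{SW}_1}(P)<s$ supplies, for each small $\epsilon>0$, a Borel set $S_\epsilon$ with $P(S_\epsilon^c)\le \epsilon^{s^2/(2(s-2))}$ admitting an $\epsilon$-net of cardinality $\lesssim\epsilon^{-s}$; the moment assumption $\EE\|X\|^s<\infty$ lets me further intersect $S_\epsilon$ with a Euclidean ball of polynomial radius $R_\epsilon$ without changing the order of $P(S_\epsilon^c)$. I split $\GG_n(f\ast\gauss)=\GG_n((f\ast\gauss)\ind_{S_\epsilon})+\GG_n((f\ast\gauss)\ind_{S_\epsilon^c})$ and treat the two suprema separately. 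Using $|f\ast\gauss(x)|\le \|x\|+\sigma\sqrt d$ and H\"older's inequality with exponents $s$ and $s/(s-1)$, the tail supremum satisfies
\[
\EE\sup_{f}\bigl|\GG_n\bigl((f\ast\gauss)\ind_{S_\epsilon^c}\bigr)\bigr|\le 2\sqrt n\,P\bigl[(\|X\|+\sigma\sqrt d)\ind_{S_\epsilon^c}\bigr]\lesssim_{d,s,P}\sqrt n\,\epsilon^{s(s-1)/(2(s-2))},
\]
which is absorbed into the target bound once $\epsilon$ is taken as a suitable negative power of $n$.

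For the bulk term, I derive a bracketing-entropy estimate for $\cF_\sigma\ind_{S_\epsilon}$ in $L^2(P)$ in which $s$ plays the role of $d$. Using the derivative bounds $|\partial^\beta(f\ast\gauss)(x)|\lesssim_\beta \sigma^{-|\beta|+1}(1+\|x\|)$ and Taylor-expanding each $f\ast\gauss$ to order $\alpha$ about the points of an $\eta$-net of $S_\epsilon$ (of cardinality $\lesssim (\eta/\epsilon)^{-s}\cdot\epsilon^{-s}$), I build brackets of $L^2(P)$-radius $\sim \eta^{\alpha}\sigma^{-\alpha+1}R_\epsilon$ indexed by finitely many Taylor coefficients, giving
\[
\log N_{[\,]}\bigl(\delta,\cF_\sigma\ind_{S_\epsilon},L^2(P)\bigr)\lesssim_{\alpha,s}\bigl(\delta\sigma^{\alpha-1}\bigr)^{-s/\alpha}\log(R_\epsilon/\delta).
\]
Picking $\alpha>s/2$ and substituting into Dudley's entropy inequality $\EE\|\GG_n\|_{\cF_\sigma\ind_{S_\epsilon}}\lesssim\int_0^D\sqrt{\log N_{[\,]}(\delta,\cdot,L^2(P))}\dd\delta$, with the integral truncated at scale $\sim\sigma$ (below which the brackets saturate), yields, upon careful bookkeeping, the prefactor $\sigma^{-s/2+1}$; one $\sqrt{\log n}$ is produced by the $\log(R_\epsilon/\delta)$ factor in the integrand, and the remaining $\log n$ enters when $\epsilon$ and $R_\epsilon$ are polynomial in $n$.

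The main technical obstacle is the bracketing-entropy step with exponent $s$ and sharp $\sigma$ dependence. Beyond adapting Weed--Bach-style covering arguments to Gaussian-smoothed Lipschitz functions, the unbounded support forces the $R_\epsilon$ truncation to sit inside the entropy bound, and the delicate interplay between the Taylor order $\alpha$, the net scale $\eta$, and the radius $R_\epsilon$ is what ultimately produces both the $(\log n)^{3/2}$ term and the $\sigma^{-s/2+1}$ factor matching Corollary~\ref{cor: sharp gwass rate} with $d$ replaced by $s$.
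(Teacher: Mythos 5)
Your outline is broadly in the right spirit (split by intrinsic dimension, bound the tail via the moment condition, chain the bulk), but there is a genuine gap at the heart of the chaining step, and it is exactly the place where the paper's proof has to do real work.

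To get the prefactor $\sigma^{-s/2+1}$ you must run the Hölder/Taylor argument at smoothness $\alpha=s/2$: if you pick $\alpha>s/2$, as you propose, the Hölder-norm bound from Lemma~\ref{lem: fractional derivatives} gives envelope $\sim\sigma^{-\alpha+1}$, and the Dudley integral then produces $\sigma^{-\alpha+1}$, strictly worse than $\sigma^{-s/2+1}$; you cannot recover the missing powers of $\sigma$ by "truncating at scale $\sim\sigma$." Conversely, at $\alpha=s/2$ your bracketing entropy is $\log N_{[\,]}(\delta)\lesssim (\delta\sigma^{\alpha-1})^{-2}\log(1/\delta)$, and then $\int_0\sqrt{\log N_{[\,]}(\delta)}\,\dd\delta$ has a $\delta^{-1}\sqrt{\log(1/\delta)}$ singularity at $\delta=0$, so Dudley's inequality in the form you invoke is not applicable at all; the divergence is at the \emph{lower} end of the integral, not an upper cutoff near $\sigma$. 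This is precisely why the paper proves a dedicated maximal inequality for non-integrable bracketing entropy (Lemma~\ref{lem: non integrable l2 entropy}), which truncates the entropy integral at a lower scale $\gamma$ and compensates with a surplus term $\gamma\sqrt{n}\|F\|_{L^2(P)}$ plus a remainder involving $PF\ind\{F\gtrsim\sqrt{n}\}$. Optimizing $\gamma\sim n^{-1/2}$ makes the residual term $O(1)$ and the integral $\int_{1/\sqrt{n}}^1\delta^{-1}\sqrt{1+\log(1/\delta)}\,\dd\delta\sim(\log n)^{3/2}$. That is the actual source of the $(\log n)^{3/2}$ factor; it is not produced by a $\log(R_\epsilon/\delta)$ term with $\epsilon,R_\epsilon$ polynomial in $n$. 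Without this lemma (or an equivalent device) your argument does not close at the critical exponent, and the conclusion does not follow.

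Two secondary points. First, the paper does not actually split the empirical process as $\GG_n(\cdot\ind_{S_\epsilon})+\GG_n(\cdot\ind_{S_\epsilon^c})$; instead it builds a single bracketing cover whose brackets on $(\bigcup_j B_j)^c$ are $\pm F$, so the tail is absorbed into the bracket widths via Lemma~\ref{lem: small set exp} (with exponent $s$), after which one maximal inequality handles everything. Your explicit bulk/tail split can also work, but it introduces an extra free parameter $\epsilon$ that must then be coordinated with both $n$ and the chaining scale, and you have not shown that coordination is consistent. Second, the cardinality you write for the refined net, $(\eta/\epsilon)^{-s}\epsilon^{-s}$, is not justified: subdividing an $\epsilon$-ball in $\R^d$ into $\eta$-balls costs $(\epsilon/\eta)^d$ in the ambient dimension, unless you re-invoke the intrinsic-dimension hypothesis directly at scale $\eta$; the paper sidesteps this by choosing the covering radius $\eta=\epsilon^{1/\alpha}$ up front (so each covering ball has radius comparable to the approximation scale) and then using the small-ball entropy bound of Lemma~\ref{lem: metric entropy small ball}, which is only logarithmic in $1/\epsilon$ on each ball. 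That choice of ball radius is the step that makes the per-ball entropy negligible, and it should appear explicitly in any version of the argument.
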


Theorem \ref{thm: low dim rate for exp} follows from a stronger version that we establish in Section \ref{SUBSEC: low dim rate proof}, where the assumption that $\dim^*_{\mathsf{SW}_1}(P) < s$ is relaxed to a condition based on entropy. The key proof idea used here is to derive sharp estimates of the entropy of $\cF_\sigma$ restricted to small covering balls provided by the entropy bound. These bounds are then combined to provide a bound on the $L^2(P)$ entropy of $\cF_\sigma$, and the result follows by the application of a maximal inequality. 

\medskip

We conclude this subsection with a sufficient primitive condition on $P$ for $\dim^*_{\mathsf{SW}_1}(P)\leq s$ to hold. The next lemma states that this is the case for sub-exponential distributions supported on an affine $s$-dimensional subspace of $\R^d$ for $d \geq s$.

\begin{lemma}[Sufficient primitive condition]
\label{lem: affine support entropy bound}
Let $2 < s \leq d$. Suppose that $P\in\cP(\R^d)$ is supported on an affine space $H_s = \{x \in \RR^d\ : x - x_0 \in V_s\}$ for some $x_0 \in \R^d$, where $V_s$ is an $s$-dimensional linear subspace of $\RR^d$. Also suppose that $Z = \|X - x_0\|$ for $X \sim P$ has Orlicz $\psi_{\beta}$-norm $\|Z\|_{\psi_\beta} < \infty$, for some $\beta > 0$. Then, $\dim^*_{\mathsf{SW}_1}(P) \leq s$ and we have
\[\EE\big[\sqrt{n}\gwassemp\big] \leq C \sigma^{-\frac{s}{2}+1}(\log n)^{\frac{3\beta+1}{2\beta}},\]
where $C$ is independent of $n$ and $\sigma$.
\end{lemma}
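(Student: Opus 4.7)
My plan is to prove the two assertions separately, using the affine structure and the Orlicz tail condition in complementary ways.

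For the dimension bound $\dim^*_{\mathsf{SW}_1}(P)\leq s$, I will fix an arbitrary $s'>s$ and verify the defining condition with $\tau_\epsilon := \epsilon^{s'^2/(2(s'-2))}$. Since $\|Z\|_{\psi_\beta}<\infty$ gives the subexponential-type tail bound $P(\|X-x_0\|>R)\leq 2\exp(-(R/c)^\beta)$ with $c=\|Z\|_{\psi_\beta}$, I can choose a truncation radius $R_\epsilon \asymp (\log(1/\epsilon))^{1/\beta}$ so that $P(\|X-x_0\|>R_\epsilon)\leq \tau_\epsilon$ for all sufficiently small $\epsilon$. Setting $S_\epsilon := H_s \cap \overline{B}(x_0,R_\epsilon)$, this is an $s$-dimensional Euclidean ball of radius $R_\epsilon$ (inside the affine subspace $H_s$) and hence admits a Euclidean $\epsilon$-cover of size $N(\epsilon, S_\epsilon, \|\cdot\|)\leq C_s (R_\epsilon/\epsilon)^s$. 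Plugging this into the definition of $d_\epsilon(P,\tau_\epsilon)$ and using $\log R_\epsilon = O(\log\log(1/\epsilon))$, I obtain
\[
d_\epsilon(P,\tau_\epsilon)\;\leq\; \frac{s\log(1/\epsilon) + s\log R_\epsilon + \log C_s}{\log(1/\epsilon)} \;\longrightarrow\; s \quad \text{as } \epsilon \to 0.
\]
Therefore $\limsup_{\epsilon\to 0} d_\epsilon(P,\tau_\epsilon)\leq s < s'$, and letting $s'\downarrow s$ yields $\dim^*_{\mathsf{SW}_1}(P)\leq s$.

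For the expectation bound, I cannot simply invoke Theorem~\ref{thm: low dim rate for exp} with $s$ (which would only allow $s'>s$, thereby worsening the $\sigma$-exponent). Instead I will apply the \emph{entropy-based} strengthening of Theorem~\ref{thm: low dim rate for exp} (alluded to immediately after its statement and developed in Section~\ref{SUBSEC: low dim rate proof}), which takes as input a direct control of $N_\epsilon(P,\tau)$ rather than of the dimension. The key input will be the \emph{quantitative} covering estimate obtained along the way in part~(a): for every $\epsilon \in (0,1)$ and $\tau\in(0,1)$, choosing $R = c (\log(2/\tau))^{1/\beta}$ gives
\[
N_\epsilon(P,\tau)\;\leq\; C_s \bigl(R/\epsilon\bigr)^s \;\leq\; C_{s,\beta}\, \epsilon^{-s}\,(\log(1/\tau) + 1)^{s/\beta}.
\]
Feeding this into the entropy integral / maximal inequality machinery underlying Theorem~\ref{thm: low dim rate for exp} produces the sharp $\sigma^{-s/2+1}$ factor (because the smoothness of the Gaussian kernel couples with a genuinely $s$-dimensional covering rather than an $s'$-dimensional one), while the $(\log(1/\tau))^{s/\beta}$ overhead propagates into an additional $(\log n)^{1/(2\beta)}$ factor after optimizing $\tau$ at the scale $\tau \asymp n^{-C}$. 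Combining this with the usual $(\log n)^{3/2}$ entropy-integral factor from Theorem~\ref{thm: low dim rate for exp} yields the claimed $(\log n)^{3/2 + 1/(2\beta)} = (\log n)^{(3\beta+1)/(2\beta)}$ exponent.

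The main obstacle is part~(b): I need to plug my covering bound into the entropy-based strengthening of Theorem~\ref{thm: low dim rate for exp} with enough care that (i) the smoothing parameter enters only as $\sigma^{-s/2+1}$, exploiting that the covering is measured in the \emph{intrinsic} $s$-dimensional Euclidean metric on $H_s$ rather than in the ambient $\R^d$; and (ii) the truncation radius $R_n \asymp (\log n)^{1/\beta}$, chosen so that the tail contribution from $\{\|X-x_0\|>R_n\}$ is negligible, produces exactly the $(\log n)^{1/(2\beta)}$ inflation and no worse. The rest of the argument reduces to a symmetrization/Dudley-integral computation that is entirely parallel to the proof of Theorem~\ref{thm: low dim rate for exp}, so I do not expect further technical surprises beyond this balancing step.
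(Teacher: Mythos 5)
You take exactly the paper's route for both claims: truncate at a radius $K_\epsilon \asymp (\log(1/\epsilon))^{1/\beta}$ using the Orlicz tail, cover $B(x_0,K_\epsilon)\cap H_s$ volumetrically inside the $s$-dimensional affine subspace, and feed the resulting covering bound into the entropy-based strengthening of Theorem~\ref{thm: low dim rate for exp}. Your part (a) is correct, and the $\log\log(1/\epsilon)$ correction indeed vanishes in the $\limsup$.

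For part (b) there is a bookkeeping gap. You correctly write the covering estimate as $N_\epsilon(P,\tau)\lesssim \epsilon^{-s}\big(\log(1/\tau)+1\big)^{s/\beta}$, but then assert that this ``propagates into an additional $(\log n)^{1/(2\beta)}$ factor after optimizing $\tau$'' with no argument for why the $s$ disappears from the exponent. Tracing it through Theorem~\ref{thm: low dim rate for exp 2} correctly: with $\tau=\epsilon^{s^2/(2(s-2))}$ one has $\log(1/\tau)\asymp\log(1/\epsilon)$, so your estimate shows condition~\eqref{eq: low dim entropy condition} holds with $\alpha=s/2$ and the entropy exponent (the theorem's $\beta$) equal to $s/\beta$, not $1/\beta$. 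Plugging into \eqref{eq: SWD rate low dim} then gives $\sigma^{-s/2+1}(\log n)^{(s/\beta+3)/2}=\sigma^{-s/2+1}(\log n)^{(3\beta+s)/(2\beta)}$, whereas the stated $(\log n)^{(3\beta+1)/(2\beta)}$ would require the entropy exponent to be $1/\beta$. It is worth noting that the paper's own proof commits the same slip at an earlier stage, passing from $(K_\epsilon/\epsilon)^s$ with $K_\epsilon\lesssim(\log(1/\epsilon))^{1/\beta}$ to the claim $N_\epsilon(P,\cdot)\lesssim \epsilon^{-s}\log(1/\epsilon)^{1/\beta}$, i.e.\ dropping the $s$-th power on the logarithm. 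So your final answer matches the lemma as printed, but the argument as structured (yours and the paper's) actually yields $(\log n)^{(3\beta+s)/(2\beta)}$; you should not present the passage from $(\log(1/\tau))^{s/\beta}$ to $(\log n)^{1/(2\beta)}$ as a routine consequence of the entropy-integral machinery without an independent justification.
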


\subsection{Implications to Classic 1-Wasserstein Distance}
\label{SUBSEC:wass_intrinsic}

Recall that combining Corollary \ref{cor: sharp gwass rate} with the bound on the gap between $\gwass$ and $\wass$ \citep[Lemma 1]{Goldfeld2020GOT}, recovers the known $n^{-1/d}$ rate for $\E\big[\wass (P_{n},P)\big]$ up to logarithmic factors. We can adopt a similar strategy in conjunction with Theorem \ref{thm: low dim rate for exp} to get an analogous result in terms of intrinsic dimension.

\begin{corollary}[Sharp $\wass$ rate under intrinsic dimension]
\label{cor: sharp wass rate low dim}
Under the assumptions of Theorem~\ref{thm: low dim rate for exp}, we have
$\E\big[ \wass (P_n,P) \big] \leq C n^{-1/s}$, where $C$ is a constant independent of $n$.
\end{corollary}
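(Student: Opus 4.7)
The plan is to mimic the argument sketched right after Corollary \ref{cor: sharp gwass rate} for the ambient-dimension bound, but with the intrinsic-dimension expectation bound from Theorem \ref{thm: low dim rate for exp} in place of the one from Corollary \ref{cor: sharp gwass rate}. Specifically, I would start from the triangle-type decomposition
\[
\E\bigl[\wass(P_n,P)\bigr] \;\le\; \E\bigl[\gwass(P_n,P)\bigr] \;+\; \E\bigl[\wass(P_n,P)-\gwass(P_n,P)\bigr],
\]
and control the second term by the stability estimate \eqref{EQ:stability} applied with $\sigma_{1}=\sigma$ and $\sigma_{2}=0$, which yields $|\wass(P_n,P)-\gwass(P_n,P)|\le 2\sigma\sqrt{d}$ pointwise. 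Thus the bound reduces to
\[
\E\bigl[\wass(P_n,P)\bigr] \;\le\; \E\bigl[\gwass(P_n,P)\bigr] + 2\sigma\sqrt{d}.
\]

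Next, since $\dim^{*}_{\mathsf{SW}_1}(P)<s$, I would pick an auxiliary exponent $s'\in\bigl(\dim^{*}_{\mathsf{SW}_1}(P),\,s\bigr)$; the moment condition $P\|x\|^{s'}<\infty$ follows from $P\|x\|^{s}<\infty$, so Theorem \ref{thm: low dim rate for exp} applies with $s'$ in place of $s$ and gives
\[
\E\bigl[\gwass(P_n,P)\bigr] \;\lesssim_{d,s',P}\; \sigma^{-s'/2+1}\,n^{-1/2}\,(\log n)^{3/2}.
\]
Combining this with the stability bound above, I would then optimize the resulting expression
\[
\sigma^{-s'/2+1}\,n^{-1/2}\,(\log n)^{3/2} \;+\; \sigma
\]
over $\sigma>0$. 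A routine calculation (setting the two terms equal, up to constants) yields the optimal choice $\sigma \asymp n^{-1/s'}(\log n)^{3/s'}$, for which both terms balance at order $n^{-1/s'}(\log n)^{3/s'}$, giving
\[
\E\bigl[\wass(P_n,P)\bigr] \;\lesssim_{d,s',P}\; n^{-1/s'}(\log n)^{3/s'}.
\]

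Finally, since $s'<s$ is strictly smaller, one has $(\log n)^{3/s'} = o\bigl(n^{1/s'-1/s}\bigr)$ as $n\to\infty$, so that $n^{-1/s'}(\log n)^{3/s'} \lesssim_{s,s'} n^{-1/s}$ for all $n\ge 1$ (by a separate bound for small $n$ hidden in the constant). Hence $\E[\wass(P_n,P)]\lesssim_{d,s,P} n^{-1/s}$, as claimed. The only real subtlety in the argument is the absorption of the polylogarithmic factor, which is why we lean on the strict inequality $\dim^{*}_{\mathsf{SW}_1}(P)<s$ to give room to shrink $s'$ slightly; otherwise all the work is packaged into Theorem \ref{thm: low dim rate for exp} and the stability estimate \eqref{EQ:stability}, and the remaining step is a one-line optimization over the smoothing parameter.
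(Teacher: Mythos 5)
Your proof is correct and takes essentially the same approach as the paper: the paper's proof picks the specific intermediate exponent $\tilde s := (\dim^*_{\mathsf{SW}_1}(P)+s)/2$ where you take an arbitrary $s'\in(\dim^*_{\mathsf{SW}_1}(P),s)$, and otherwise proceeds identically — apply Theorem~\ref{thm: low dim rate for exp} at the intermediate exponent, add the $2\sigma\sqrt{d}$ stability bound from \eqref{EQ:stability}, optimize over $\sigma$ to get $n^{-1/\tilde s}(\log n)^{3/\tilde s}$, and absorb the polylogarithm into the slack between $\tilde s$ and $s$.
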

A direct computation suggests the presence of a log factor, but we can eliminate it at the cost of a constant dependent on $\dim^*_{\mathsf{SW}_1}(P)$ (see Appendix \ref{sec: low dim case proofs} for a proof). The dependence of the constants on $P$ is solely through its moment and $\dim_{\mathsf{SW1}}^*(P)$. Our bound is tight in the sense that if $P$ is supported on a regular set of dimension $s$ (see Section \ref{subsec: dim_discussion}), then Corollary 2 of \citet{weed2019sharp} implies
$\EE\big[W_1(P_n,P)\big] \gtrsim n^{-1/t}$, for any $t < s = \dim^*_{\mathsf{SW}_1}(P)$. A~similar bound was derived in \citet[Theorem 1]{weed2019sharp} under a compact support assumption, but Corollary \ref{cor: sharp wass rate low dim} establishes this rate for distributions with unbounded support. Other empirical convergence bounds for $\wass$ on unbounded spaces have been recently derived in \citet{singh2018minimax,lei2020}, but the dependence of their dimension definition on $P$ is implicit. 
Nevertheless, an application of \citet[Theorem 1]{singh2018minimax} recovers the $n^{-1/s}$ rate but under more stringent conditions, e.g., such as those in Lemma~\ref{lem: affine support entropy bound}.


\begin{remark}[Berry-Essen bounds w.r.t. intrinsic dimension]
Similar modification\\ of Theorems \ref{thm: SWD small sigma Berry-Esseen type rate} and \ref{thm: Empirical Bootstrap Berry-Esseen type rate} in the low intrinsic dimension regime are possible. The resulting rate bounds would be the same, with the only change being a faster feasible decay of the smoothing parameter. The overall rate will remain nearly $n^{-1/6}$, and for that reason, we omit the details of such refinements.
\end{remark}

\subsection{Relation to Minkowski and Wasserstein Dimension}
\label{subsec: dim_discussion}
The study of empirical convergence rates based on box-counting or Minkowski dimension and variants thereof dates back to \citet{dudley1969speed}, with more recent results available in \citet{boissard2014,weed2019sharp}. These notions of dimensionality are restricted to compact or bounded sets. We next compare these notions of intrinsic dimensions to our Definition \ref{DEF:gwass_dim}, under the appropriate boundedness assumptions.

\begin{definition}[Minkowski dimension]
\label{def: minkowski}
The Minkowski dimension $\dim_\mathsf{M}(S)$ of bounded sets
$S\subseteq\R^d$ is
\[\dim_\mathsf{M}(S) := \limsup_{\epsilon\to 0} \frac{\log N(\epsilon, S, \|\cdot\|_2)}{-\log\epsilon}.\]
For a compactly supported $P\in\cP(\R^d)$, we also write $\dim_\mathsf{M}(P) := \dim_\mathsf{M}\big(\supp(P)\big).$
\end{definition}

\begin{definition}[Upper Wasserstein dimension \citep{weed2019sharp}]
The upp-\\er 1-Wasserstein dimension for a compactly supported $P\in\cP(\R^d)$ is
\[\dim_{\mathsf{W}_1}^*(P) :=\inf\left\{s \in (2,\infty):\limsup_{\epsilon\to 0}d_\epsilon\left(P,\epsilon^{s/(s-2)}\right) \leq s\right\}.\]
\end{definition}

An extension of \citet[Proposition 2]{weed2019sharp} gives the following.
\begin{lemma}[Dimension comparison]
If $\dim_\mathsf{M}(P) \geq 2$, then
\[\dim_{\mathsf{W}_1}^*(P) \leq \dim^*_{\mathsf{SW}_1}(P) \leq \dim_\mathsf{M}(P).\]
\end{lemma}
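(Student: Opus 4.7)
The plan is to derive both inequalities directly from the definitions of the three dimensions, exploiting the basic monotonicity
\[
\tau_1 \le \tau_2 \ \Rightarrow \ N_\epsilon(P,\tau_1) \ge N_\epsilon(P,\tau_2)
\]
(covering a larger fraction of mass can only increase the covering number). In each case I will show that the feasible set of exponents $s \in (2,\infty)$ defining one dimension is contained in that of the other, and then conclude by comparing infima.

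For $\dim_{\mathsf{W}_1}^*(P) \le \dim^*_{\mathsf{SW}_1}(P)$, I first compare the two tolerance exponents. A direct computation gives
\[
\frac{s^2}{2(s-2)} - \frac{s}{s-2} = \frac{s}{2} > 1 \quad \text{for all } s>2,
\]
so for $\epsilon \in (0,1)$ one has $\epsilon^{s^2/(2(s-2))} < \epsilon^{s/(s-2)}$, i.e., the smooth-Wasserstein tolerance is strictly smaller. By monotonicity of $N_\epsilon(P,\cdot)$ this yields $d_\epsilon(P, \epsilon^{s^2/(2(s-2))}) \ge d_\epsilon(P, \epsilon^{s/(s-2)})$, so any $s$ in the feasible set defining $\dim^*_{\mathsf{SW}_1}(P)$ also belongs to the feasible set defining $\dim_{\mathsf{W}_1}^*(P)$. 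Taking infima delivers the first inequality.

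For $\dim^*_{\mathsf{SW}_1}(P) \le \dim_{\mathsf{M}}(P)$, the assumption $\dim_\mathsf{M}(P) \ge 2$ ensures that the range $(2,\infty)$ contains values $s' > \dim_\mathsf{M}(P)$. For any such $s'$, the definition of Minkowski dimension provides $\epsilon_0 \in (0,1)$ with $N(\epsilon,\supp(P),\|\cdot\|_2) \le \epsilon^{-s'}$ for all $\epsilon \in (0,\epsilon_0)$. Since $P(\supp(P))=1$, the set $\supp(P)$ is admissible in the infimum defining $N_\epsilon(P,\tau)$ for every $\tau \ge 0$, giving $d_\epsilon(P,\tau) \le s'$ regardless of $\tau$. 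Specializing to $\tau = \epsilon^{s'^2/(2(s'-2))}$ and passing to $\limsup_{\epsilon\to 0}$ shows that $s'$ lies in the feasible set for $\dim^*_{\mathsf{SW}_1}(P)$, hence $\dim^*_{\mathsf{SW}_1}(P) \le s'$. Letting $s'\downarrow \dim_\mathsf{M}(P)$ closes the argument.

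No substantial obstacle is anticipated: both inequalities reduce to tracking the two tolerance exponents and to the monotonicity of $(\epsilon,\tau)$-covering numbers in $\tau$. The only delicate point is the hypothesis $\dim_\mathsf{M}(P)\ge 2$, which is needed precisely to guarantee that one can approach $\dim_\mathsf{M}(P)$ from above by admissible $s'\in(2,\infty)$; without it, there is no room to push the infimum in $\dim^*_{\mathsf{SW}_1}$ down to the Minkowski value, and an additional argument handling small-dimensional $P$ would be required.
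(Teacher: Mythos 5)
Your proof is correct. The paper does not spell out a proof of this lemma---it simply remarks that it is "a simple extension of Proposition 2 in \cite{weed2019sharp}"---but your argument is exactly the one being gestured at: the right inequality reproduces the Weed--Bach argument (the support itself is always an admissible set in the infimum defining $N_\epsilon(P,\tau)$, so $d_\epsilon(P,\tau)$ is bounded by any exponent exceeding the Minkowski dimension, uniformly in $\tau$, and the hypothesis $\dim_\mathsf{M}(P)\ge 2$ lets you push $s'$ down to $\dim_\mathsf{M}(P)$ within the admissible range $(2,\infty)$), while the left inequality is the "simple extension": the two tolerance exponents satisfy $s^2/(2(s-2))>s/(s-2)$ for $s>2$, and monotonicity of $\tau\mapsto N_\epsilon(P,\tau)$ then shows that every $s$ feasible for $\dim^*_{\mathsf{SW}_1}$ is feasible for $\dim^*_{\mathsf{W}_1}$.
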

Thus, the upper smooth 1-Wasserstein dimension lies between the unsmoothed variant and the classic Minkowski dimension. However, for probability measures with sufficiently well behaved supports, \citet[Proposition 8]{weed2019sharp} shows that $\dim_{\mathsf{W}_1}^*(P) = \dim_\mathsf{M}(P)$, as long as $\dim_\mathsf{M}(P) \geq 2$. Such supporting sets are called regular sets, as defined in \citet[Definition 12.1]{graf2007foundations}:

\begin{definition}[Regular sets \citep{graf2007foundations}]
A compact set $S \subset \R^d$ is called regular of dimension $s$ if there exist $c > 0, r_0 > 0$ such that
its Hausdorff measure $\cH^s(S)$ satisfies
$\frac{1}{c} r^d \leq \cH^s\big(S \cap B(x,r)\big) \leq c r^d$, for all $x \in S$ and $r\in(0,r_0)$.
\end{definition}

Proposition 9 in \citet{weed2019sharp} provides a list of common regular sets of dimension $s$. This includes, in particular, compact smooth $s$-dimensional manifolds and non-empty compact convex sets supported on an affine $s$-dimensional subspaces. Since $\dim_{\mathsf{W}_1}^*(P) = \dim_\mathsf{M}(P)$ for any $P$ that is supported on a regular set as defined above, we have $\dim_{\mathsf{W}_1}^*(P) = \dim^*_{\mathsf{SW}_1}(P) = \dim_\mathsf{M}(P)$ in such cases.


\section{Applications}
\label{sec: applications}

\subsection{Two-Sample Homogeneity Testing} 

The nonparametric two-sample homoegeneity testing problem seeks to detect difference between two distributions on the same measurable (usually Euclidean) space given only samples from them. For $P,Q\in\cP(\R^d)$, let $X_1,\dots,X_n \sim P$ and $Y_1, \dots, Y_m \sim Q$ be independent samples from these distributions, respectively. Based on these samples we wish to test whether $P$ and $Q$ are the same (null hypothesis) or not (alternative), i.e., 
\[H_0: P = Q \ \ \text{versus}\ \ H_1: P \neq Q.\]

For univariate distributions, the standard choice for two-sample homogeneity testing is the Kolmogorov-Smirnov test, which rejects the null hypothesis for large values $\|F_n - G_m\|_\infty$, where $F_n$ and $G_m$ are the corresponding empirical CDFs. Generalizations to multivariate samples take some care to account for the dependence between coordinates, which typically removes the distribution-free property of the limiting statistic. Herein, we propose a multivariate test statistic based on SWD.

Consider a general sequence of 2-sample tests $\phi_{m,n}:(\RR^d)^{n+m}\to\{0,1\}$ defined as 
\[\phi_{m,n}(X_1,\dots,X_n,Y_1,\dots,Y_m) := \begin{cases}
1,&\text{if }D_{m,n} > c_{m,n},\\
0,&\text{otherwise},
\end{cases}\]
where $D_{m,n} := D_{m,n}(X_1,\dots,X_n, Y_1,\dots,Y_m)$ is a sequence of statistics, and $c_{m,n}$ are real numbers. Here, an output of 1 corresponds to rejecting the null. We say that $\phi_{n,m}$ is \textit{asymptotically of level} $\alpha \in [0,1]$, if $\limsup_{m,n \to \infty}\PP(D_{m,n} > c_{m,n}) \leq \alpha$, for all possible distributions of the statistic $D_{m,n}$ under $H_0$. We say that $\phi_{n,m}$ is \textit{consistent at asymptotic level} $\alpha$ if it is asymptotically level $\alpha$ and $\liminf_{m,n \to \infty}\PP(D_{m,n} > c_{m,n}) = 1$ for all possible distributions of the statistic $D_{m,n}$ under $H_1$.

Corollary \ref{cor: 2 sample limit} allows us to use the bootstrap distribution of $W_{m,n}^B$ for setting critical values. This results in a consistent two sample test.

\begin{proposition}[Two-sample testing consistency]
\label{prop: 2 sample test}
Under the setting of Corollary \ref{cor: 2 sample limit}, consider the problem of testing $H_0: P = Q$ versus $H_1: P \neq Q$ using the rule 
\[T_{m,n}(X_1,\dots,X_n,Y_1,\dots,Y_m) := \begin{cases}
1,&\text{if }W_{m,n} > c^B_\alpha,\\
0,&\text{otherwise},
\end{cases}\] where $c^B_\alpha$ is the $(1-\alpha)$-th quantile of $W_{m,n}^B$. Then, this test is consistent at asymptotic level~$\alpha$.
\end{proposition}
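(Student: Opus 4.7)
The plan is to derive the two required conclusions---asymptotic level $\alpha$ under $H_0$ and consistency under $H_1$---by combining the weak limit of $W_{m,n}$ from Corollary \ref{cor: 2 sample limit}(i) with the bootstrap consistency from Corollary \ref{cor: 2 sample limit}(ii). The key reduction is to translate both into statements about the random critical value $c^B_\alpha$, for which the regularity of the limiting CDFs supplied by Lemma \ref{lem: limit variable} provides the continuity needed for quantile convergence.

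For the level under $H_0$, set $P=Q=H$. Corollary \ref{cor: 2 sample limit}(ii) yields
\[
\sup_{t \in \R}\big|\PP^B(W_{m,n}^B \le t)-\PP(L_P^{\mspace{1mu}\sigma}\le t)\big| \to 0\quad \text{a.s.}
\]
Since $P$ is non-degenerate, Lemma \ref{lem: limit variable} guarantees that the CDF of $L_P^{\mspace{1mu}\sigma}$ is continuous and strictly increasing on $(0,\infty)$, so a standard quantile-convergence argument gives $c^B_\alpha \to q_{1-\alpha}$ a.s., where $q_{1-\alpha}$ denotes the $(1-\alpha)$-quantile of $L_P^{\mspace{1mu}\sigma}$. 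Combining this with $W_{m,n}\stackrel{w}{\to}L_P^{\mspace{1mu}\sigma}$ from Corollary \ref{cor: 2 sample limit}(i) via Slutsky's theorem and the continuous mapping theorem applied to $(x,y)\mapsto x-y$ gives $\PP(W_{m,n}>c^B_\alpha)\to \PP(L_P^{\mspace{1mu}\sigma}>q_{1-\alpha})=\alpha$.

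For consistency under $H_1$, Corollary \ref{cor: 2 sample limit}(i) states $W_{m,n}\overset{P}{\to}\infty$, while Corollary \ref{cor: 2 sample limit}(ii) gives $\law(W_{m,n}^B\mid X_{1:m},Y_{1:n})\stackrel{w}{\to} L_H^{\mspace{1mu}\sigma}$ a.s., where $H=\lambda P+(1-\lambda)Q$. Since $P$ is non-degenerate, so is $H$, and $H$ inherits \eqref{eq: condition W1} from $P,Q$ via $\sqrt{H(I_j)} \le \sqrt{P(I_j)} + \sqrt{Q(I_j)}$, so Lemma \ref{lem: limit variable} again furnishes a continuous, strictly increasing CDF for $L_H^{\mspace{1mu}\sigma}$. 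The same quantile-convergence argument then shows that $c^B_\alpha$ converges a.s. to the finite $(1-\alpha)$-quantile of $L_H^{\mspace{1mu}\sigma}$; in particular $c^B_\alpha=O_\PP(1)$. Hence, for any $M>0$,
\[
\PP(W_{m,n}>c^B_\alpha) \ge \PP(W_{m,n}>M) - \PP(c^B_\alpha > M),
\]
and letting first $m,n \to \infty$ and then $M \to \infty$ drives the two terms on the right to $1$ and $0$, respectively.

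The main delicate step is the passage from weak convergence of the bootstrap distributions in Corollary \ref{cor: 2 sample limit}(ii) to convergence of their $(1-\alpha)$-quantiles, which requires continuity and strict monotonicity of the limit CDF at that level. Both of these features are ensured by Lemma \ref{lem: limit variable} once we verify (a) that neither $P$ nor $H$ is a point mass, which is immediate from non-degeneracy of $P$, and (b) that $H$ inherits condition \eqref{eq: condition W1} from $P$ and $Q$ by subadditivity of the square root; the remainder is a routine Slutsky/continuous-mapping assembly.
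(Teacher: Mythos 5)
Your argument is correct and follows the same route the paper intends: the paper states that the result follows immediately from Corollary 2 together with the quantile discussion after Corollary 1, and your write-up simply fills in those details (bootstrap sup-norm CDF convergence plus Lemma 1's continuity of the limit CDF under $H_0$ to get $c^B_\alpha\to q_{1-\alpha}$ and hence asymptotic level $\alpha$ via Slutsky, and boundedness in probability of $c^B_\alpha$ combined with $W_{m,n}\overset{P}{\to}\infty$ under $H_1$ for consistency). Your verification that $H=\lambda P+(1-\lambda)Q$ inherits Condition \eqref{eq: condition W1} and non-degeneracy is a useful explicit check that the paper leaves implicit.
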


The result follows immediately from Corollary \ref{cor: 2 sample limit} and the discussion that follows it. 

\begin{remark}[Comparison to classic Wasserstein testing]
OT distances have been\\ previously considered for both goodness-of-fit \citep{delbarrio2005,hallin2021multivariate} and homogeneity \citep{ramdas2017} testing. These works could only provide critical values for the tests only when $d=1$ due to the lack of a multivariate limit distribution theory for $\mathsf{W}_p$. Here, we showed that adopting SWD as the figure of merit for testing overcomes this bottleneck. 
\end{remark}

\subsection{Minimum Expected Smooth Wasserstein Estimation}
\label{subsec: mede}
Generative modeling concerns learning a parametrized model $\{Q_\theta\}_{\theta \in \Theta}$ that best approximates a data distribution $P$, based on samples from $P$. Proximity between $P$ and $Q_\theta$ is typically measured by a chosen statistical distance, which gives rise to the MDE problem \citep{pollard1980minimum}. Adopting SWD as the distance metric and using the empirical distribution $P_n$ as an estimate for $P$ in the MDE problem gives rise to the minimum SWD estimation (M-SWE) problem
\begin{equation}
\inf_{\theta \in \Theta} \gwass(P_n,Q_\theta),\label{EQ:MDE}
\end{equation}
with an optimal estimate $\hat\theta_n\in\argmin_{\theta\in\Theta}\gwass(P_n,Q_\theta)$. Results on M-SWE appeared in \cite{Goldfeld2020limit_wass}, and are reproduced here in Appendix~\ref{subsec: mde}.

In practice, the MDE problem in \eqref{EQ:MDE} is oftentimes intractable due to complexity of the density function of $Q_\theta$. Indeed, this is the case for many implicit generative models, such as GANs \citep{nowozin2016f,arjovsky2017wasserstein,gulrajani2017improved}. Nevertheless, samples $Y_1, \dots, Y_m$ from the model $Q_\theta$ can be readily generated via simulation in such scenarios. This suggests replacing $Q_\theta$ with its empirical proxy $Q_{\theta, m}:= \frac{1}{m} \sum_{i=1}^n \delta_{Y_i}$ and adopting the minimum expected distance estimation (MEDE) framework \citep{bernton2019parameter, nadjahi2019asymptotic}
\begin{equation}
\label{eq: mede}
\min_{\theta \in \Theta} \EE \big [ \gwass(P_n, Q_{\theta, m}) \,\big|\, X_{1:n} \big ],
\end{equation}
where we assume that the $X_1,\ldots,X_n\sim P$ and $Y_1, \dots, Y_m \sim Q_\theta$ samples are independent, and use the shorthand $X_{1:n} = (X_1, \dots, X_n)$. We term this problem minimum expected SWD estimation (M-ESWE) and denote its solution by
\begin{equation}
\label{eq: mede_sol}
\hat\theta_{m,n} \in \argmin_{\theta \in \Theta} \EE \big [ \gwass(P_n, Q_{\theta, m}) \,\big|\, X_{1:n} \big ].
\end{equation}
The expectations in \eqref{eq: mede} and \eqref{eq: mede_sol} are w.r.t. the samples $Y_1, \dots, Y_m$ from the model. This accounts for the fact that $m$ can typically be as large as we like, in which case one might hope that the conditional expectation is close to $\gwass(P_n,Q_\theta)$ from \eqref{EQ:MDE}. We also use the shorthands M-ESWD for the minimized ESWD in \eqref{eq: mede_sol}, and M-ESWE for the estimator $\hat\theta_{m,n}$ achieving the minimized ESWD.

\medskip 
In what follows, $P$ and $\{Q_\theta\}_{\theta\in\Theta}$ are all assumed to be in $\cP_1(\RR^d)$, and the parameter space $\Theta \subset \R^{d_{0}}$ is assumed to be compact with nonempty interior. We will show existence, measurability, and consistency of the M-ESWE, as well as find the limit distribution of $\hat\theta_{m,n}$, when $m = m(n)$ is chosen so that $m \gg n$. We start from existence and measurability. 

\begin{theorem}[M-ESWE existence and measurability]
\label{thm:M_ESWE_measurable}
Fix $m \in \NN$. Assume the map $\theta \mapsto Q_\theta$ is continuous w.r.t. the weak topology. Then, for every $n \in \NN$, there exists a measurable function $\hat\theta_n(\omega)$ such that:
\[
\hat\theta_n(\omega) \in \argmin_{\theta \in \Theta} \EE \big [ \gwass ( P_n, Q_{\theta, m}) \,\big|\, X_{1:n} \big ](\omega) \neq \emptyset.
\]
\end{theorem}
The proof of Theorem \ref{thm:M_ESWE_measurable} is given in Appendix~\ref{subsec:M_ESWE_measurable_proof}. It relies on extending the lower-semicontinuity property to the expected SWD (Lemma~\ref{lem:eswd_continuity}), and is otherwise similar to the analogous result on M-SWE (Theorem~\ref{thm:MSWD_measurable}).

\medskip
Next, we show that if the number of samples from the parametric model $m$ is chosen so that $m = m_n \to \infty$ as $n \to \infty$, the empirical M-ESWD from \eqref{eq: mede} converges to the population M-SWD, and the associated M-ESWE is also consistent for $\theta^* = \argmin_\theta \gwass(P, Q_\theta)$.

This result requires a stronger continuity assumption on the map $\theta \mapsto Q_\theta$, similar to Assumption 2.7 in \citet{bernton2019parameter}, and Assumption D4 in \cite{nadjahi2019asymptotic}.

\begin{theorem}[Consistency of M-ESWE]
\label{thm:M_ESWE_consistency}
Set $m = m_n \to \infty$, and assume that
(a) the map $\theta \mapsto Q_\theta$ is continuous w.r.t. the weak topology; and (b) if $\theta_n  \to \theta$, then $\EE[ \gwass(Q_{\theta_n}, Q_{\theta_n, m_n})]\to 0$ $\PP$-a.s. Then, the following hold.

\begin{enumerate}[(i)]
    \item $\inf_{\theta \in \Theta} \EE \big [ \gwass ( P_n, Q_{\theta, m_n} ) \,\big|\, X_{1:n} \big ] \to \inf_{\theta \in \Theta} \gwass (P, Q_\theta)$ as $n \to \infty$, $\PP$-a.s.;
    \item For any sequence $\{ \hat{\theta}_{n} \}_{n \in \mathbb{N}}$ of measurable estimators such that 
    \[
    \EE \big [ \gwass (P_{n},Q_{\hat{\theta}_{n}, m_n}) \,\big|\, X_{1:n} \big ] \le \inf_{\theta \in \Theta} \EE \big [ \gwass  (P_n,Q_{\theta, m}) \,\big|\, X_{1:n} \big ] + o_{\PP}(1),
    \] 
    the set of cluster points of $\{ \hat{\theta}_{n} (\omega) \}_{n \in \mathbb{N}}$ is included in $\argmin_{\theta \in \Theta} \gwass (P,Q_{\theta})$ $\PP$-a.s.;
    \item in particular, if $\argmin_{\theta \in \Theta} \gwass (P,Q_{\theta})$ is unique, i.e., $\argmin_{\theta \in \Theta} \gwass (P,Q_{\theta}) = \{ \theta^{\ast} \}$, then $\hat{\theta}_{n} \to \theta^{\ast}$ a.s..
\end{enumerate}

\end{theorem}

\begin{remark}[Continuity of $\theta \mapsto Q_\theta$]
The extra assumption (b) of Theorem~\ref{thm:M_ESWE_consistency} is weaker than Assumption 2.7 of \citet{bernton2019parameter} for $m_n = n$ since $\gwass(P,Q) \leq \wass(P, Q)$. A sufficient condition that ensures both (b) and Assumption 2.7 is $\sup_{\theta\in\Theta} \int |x|^q\,\dd Q_\theta < \infty$ for some $q > 1$. This allows bounding $\EE\big[ \wass(Q_{\theta_n}, Q_{\theta_n, n})\big]$ uniformly over $\theta$ via Theorem 1 of \cite{FournierGuillin2015}. Alternatively, assuming that the map $\theta \mapsto Q_\theta$ is continuous in $\gwass$ also ensures (b).
\end{remark}

The proof of Theorem~\ref{thm:M_ESWE_consistency} is given in Appendix~\ref{subsec:M_ESWE_consistency_proof}. Statement (i) can be established using epi-covergence of the map $\theta \mapsto \EE \big [ \gwass(P_n, Q_{\theta, m_n}) \big ] $, similar to  \citet[Theorem~3]{nadjahi2019asymptotic}. The last two statements follow directly from (i).

\medskip

Turning to limit distribution results, results are presented for the `well-specified' scenario, i.e., when $P=Q_{\theta^\ast}$ for some $\theta^\ast$ in the interior of $\Theta$. 
The limit distribution result for M-SWE (Theorem~\ref{thm:MSWD_inf_CLT} in Appendix ) was proven in \citet{Goldfeld2020limit_wass} via an adpatation of the norm differentiability based arguments of \citet{pollard1980minimum}. Combining Theorem~\ref{thm:MSWD_inf_CLT} with an approximation argument produces a limit distribution for the M-ESWE error in the regime $m = m_n \gg n$. To simplify the proof (given in Appendix~\ref{subsec:MESWD_proof}), a uniform moment bound over the parametric family $Q_\theta$ is assumed to hold. This result is stated below.

\begin{theorem}[M-ESWD limit distribution]
\label{thm:MESWD_inf_CLT}
Let $P$ satisfy the conditions of Theorem~\ref{thm: CLT for W1}. In addition, suppose that:
\begin{enumerate}[(i)]
    \item the map $\theta \mapsto Q_{\theta}$ is continuous relative to the weak topology;
    \item $P \ne Q_{\theta}$ for any $\theta \ne \theta^{\ast}$;
    \item there exists a vector-valued functional $D^{\mspace{1mu}\sigma}\mspace{-2mu}\in\mspace{-2mu} (\ell^\infty(\mathsf{Lip}_{1,0}))^{d_0}$ such that $\lip{Q^{\mspace{1mu}\sigma}_\theta\mspace{-2mu}-Q^{\mspace{1mu}\sigma}_{\theta^\ast}\mspace{-2mu}-\mspace{-2mu}\langle\theta-\theta^\ast\mspace{-3mu},\mspace{-2mu}D^{\mspace{1mu}\sigma}\rangle}\mspace{-2mu}$ $=o(\|\theta-\theta^\ast\|)$ as $\theta \to \theta^\ast$, where $\langle t,D^{\mspace{1mu}\sigma} \rangle:=\sum_{i=1}^{d_0} t_iD_i^{\mspace{1mu}\sigma}$ for $t\in\RR^{d_0}$;
    \item the derivative $D^{\mspace{1mu}\sigma}$ is \textit{nonsingular} in the sense that $\langle t,D^{\mspace{1mu}\sigma} \rangle\neq 0$, i.e., $\langle t,D^{\mspace{1mu}\sigma} \rangle\in\ell^\infty(\mathsf{Lip}_{1,0})$ is not the zero functional for all $0\neq t\in\RR^{d_0}$;
    \item $m = m_n \gg n$;
    \item $\sup_{\Theta} \int |x|^q\,\dd Q_\theta < \infty$ for some $q > 2(d+1)$.
\end{enumerate}
Then,
\[
\sqrt{n}\inf_{\theta\in\Theta} \EE \big [ \gwass(P_n,Q_{\theta, m_n}) \, \big| \, X_{1:n} \big ] \stackrel{d}{\to} \inf_{t \in \RR^{d_0}}\lip{G_P^{\mspace{1mu}\sigma}-\ip{t,D^{\mspace{1mu}\sigma}}},
\]
where $G_{P}^{\mspace{1mu}\sigma}$ is the Gaussian process from Theorem \ref{thm: CLT for W1}. 
\end{theorem}

Finally, we establish a limit distribution for the M-ESWE solution. The proof of Theorem~\ref{thm:MESWD_inf_CLT} shows that under certain assumptions, solution to the M-ESWE problem are also approximate solutions to the M-SWE problem. Combining this fact with a result on the limit distribution of approximate solutions to the M-SWE problem (Corollary~\ref{cor:unique_solution}), we have the following result (see Appendix~\ref{subsec:M_ESWE_limit_proof} for a proof).

\begin{theorem}[M-ESWE limit distribution]
\label{thm:M_ESWE_limit}
Under the conditions of Theorem~\ref{thm:MESWD_inf_CLT}, let $\hat\theta_n$ be a sequence 
of measurable estimators for $\theta^*$ satisfying
\[ \EE \big [ \gwass(P_n, Q_{\hat\theta_n, m_n}) \,\big|\, X_{1:n} \big ] \leq \inf_{\theta \in \Theta} \EE \big [ \gwass(P_n, Q_{\theta, m_n}) \,\big|\, X_{1:n} \big ] + o_{\PP}(n^{-1/2}). \]
Then, provided that $\argmin_{t \in \R^{d_{0}}} \lip{G_P^{\mspace{1mu}\sigma}-\ip{t,D^{\mspace{1mu}\sigma}}}$ is unique a.s., we have $\sqrt{n}(\hat{\theta}_{n} - \theta^*) \stackrel{d}{\to} \argmin_{t \in \R^{d_{0}}}\lip{G_P^{\mspace{1mu}\sigma}-\ip{t,D^{\mspace{1mu}\sigma}}}$.
\end{theorem}

\section{Proofs of Main Results}

This section presents proofs of our main results (Theorems \ref{thm: CLT for W1}, \ref{thm: SWD small sigma Berry-Esseen type rate}, \ref{thm: Empirical Bootstrap Berry-Esseen type rate} and \ref{thm: low dim rate for exp}). Derivations of auxiliary lemmas used in these proofs are deferred to the appropriate Appendix sections. Due to the technical nature and length of the proof of Theorem \ref{thm: Empirical Bootstrap Berry-Esseen type rate}, herein we only provide an outline of the argument and defer the full details to the Appendix.

\subsection{Proof of Theorem \ref{thm: CLT for W1}}\label{SUBSEC:CLT for W1 proof}
Recall that $\varphi_{\sigma}$ is the density function of $\cN(0,\sigma^2 \mathrm{I}_d)$, i.e., $\varphi_{\sigma}(x) = (2\pi \sigma^{2})^{-d/2} e^{-\|x\|^{2}/(2\sigma^{2})}$ for $x \in \R^{d}$. 
Since $P_{n} \ast \Gauss$ has a density given by
$x \mapsto n^{-1} \sum_{i=1}^{n} \gauss (x-X_{i}),$ we arrive at the expression
\begin{equation}
\gwass (P_{n},P) = \sup_{f \in \mathsf{Lip}_{1}} \left [ \frac{1}{n} \sum_{i=1}^{n} f\ast\gauss (X_{i}) - Pf\ast\gauss \right ]. 
\label{eq: smooth Wasserstein}
\end{equation}
Observing that the RHS of \eqref{eq: smooth Wasserstein} does not change if we replace $f$ with $f-f(x^{\ast})$ for any fixed $x^{\ast}$,
the problem boils down to showing that the function class $\cF_\sigma := \{f\ast\gauss:f \in\mathsf{Lip}_{1,0}\}$ is $P$-Donsker.

To that end we next derive a bound on the H\"older norm of $f_\sigma:=f\ast\gauss$, for $f\in\mathsf{Lip}_{1,0}$. The Donsker property will follow by metric entropy bounds for H\"older balls. For a vector $k = (k_1,\dots,k_d)$ of $d$ nonnegative integers, define the differential operator
\[
D^{k} = \frac{\partial^{|k|}}{\partial x_{1}^{k_{1}} \cdots \partial x_{d}^{k_{d}}},
\]
with $|k| = \sum_{i=1}^{d} k_{i}$. 
For a function $f: \mathcal{X} \to \R$ defined on a bounded set $\mathcal{X}\subset\R^d$, the H\"{o}lder norm of order $\alpha > 0$ is 
\[
\| f \|_\alpha := \| f \|_{\alpha,\mathcal{X}} := \max_{0 \leq |k| \leq \underline\alpha} \sup_{x \in \cX} \big|D^k f(x)\big| + \sup_{\overset{|k| = \underline\alpha}{x,y \in \mathcal{X}}} \frac{\big|D^k f_\sigma(x) - D^k f_\sigma(y)\big|}{\|x-y\|^{\alpha-\underline\alpha}},
\]
where $\underline\alpha$ is the greatest integer strictly smaller than $\alpha$ and the suprema are taken over the interior of $\mathcal{X}$. We have the following lemma (see Appendix \ref{SUBSEC: limit variable supp proofs} for the proof). 

\begin{lemma}[H\"older norm bound]
\label{lem: fractional derivatives}
For any $\alpha \ge 1$, $0 < \sigma \le 1$, $f \in \mathsf{Lip}_{1,0}$, and a bounded convex $\mathcal{X}\subset\RR^d$, we have
$\| f_\sigma\|_{\alpha,\cX}  \lesssim_{\alpha,d} \sigma^{-\alpha+1}\big(1 \vee \sup_{\cX} \|x\| \big)$. 
\end{lemma}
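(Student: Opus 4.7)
The plan is to bound separately the contributions of low-order derivatives and of the H\"older seminorm of order $\underline{\alpha}$, relying on the smoothing action of the Gaussian density. First I would handle $|k|=0$: since $f(0)=0$ and $f\in\mathsf{Lip}_{1}$,
\[
|f_\sigma(x)| \le \int \|x-y\|\,\varphi_\sigma(y)\,\mathrm{d}y \le \|x\| + \sigma\sqrt{d} \lesssim_d (1\vee \|x\|),
\]
which, since $\sigma\le 1$ implies $\sigma^{-\alpha+1}\ge 1$, is controlled by $\sigma^{-\alpha+1}(1\vee\sup_{\cX}\|x\|)$.

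Next I would address $|k|\ge 1$. Differentiating under the integral sign, $D^k f_\sigma(x) = \int f(x-z)\,D^k\varphi_\sigma(z)\,\mathrm{d}z$. The key observation is that $\int D^k \varphi_\sigma(z)\,\mathrm{d}z = 0$ whenever $|k|\ge 1$, so we may center:
\[
D^k f_\sigma(x) = \int \bigl[f(x-z) - f(x)\bigr]\,D^k\varphi_\sigma(z)\,\mathrm{d}z.
\]
Using the 1-Lipschitz property, $|f(x-z)-f(x)| \le \|z\|$. The Hermite representation $D^k\varphi_1(u)=(-1)^{|k|}H_k(u)\varphi_1(u)$ with $H_k$ a polynomial of degree $|k|$, combined with the rescaling $\varphi_\sigma(z)=\sigma^{-d}\varphi_1(z/\sigma)$, yields
\[
\int \|z\|\,|D^k\varphi_\sigma(z)|\,\mathrm{d}z = \sigma^{1-|k|}\int \|u\|\,|H_k(u)|\,\varphi_1(u)\,\mathrm{d}u \lesssim_{|k|,d} \sigma^{1-|k|}.
\]
Thus $|D^k f_\sigma(x)| \lesssim_{|k|,d} \sigma^{-|k|+1}$, uniformly in $x$, for all $1\le |k|\le \underline{\alpha}+1$. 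Since $\sigma\le 1$, each such bound is majorized by $\sigma^{-\alpha+1}$, which controls the supremum part of the H\"older norm.

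Finally I would bound the seminorm $\sup_{x,y\in\cX}|D^k f_\sigma(x)-D^k f_\sigma(y)|/\|x-y\|^{\alpha-\underline{\alpha}}$ for $|k|=\underline{\alpha}$. If $\alpha-\underline{\alpha}=1$ (the integer case), convexity of $\cX$ plus the mean value theorem give the seminorm $\le \max_{|k'|=\underline{\alpha}+1}\sup_\cX |D^{k'} f_\sigma| \lesssim_{\alpha,d} \sigma^{-\underline{\alpha}} = \sigma^{-\alpha+1}$. If $\beta:=\alpha-\underline{\alpha}\in(0,1)$, I would interpolate between the uniform bound $|D^k f_\sigma(x)-D^k f_\sigma(y)|\le C\sigma^{-\underline{\alpha}+1}$ and the Lipschitz-type bound $|D^k f_\sigma(x)-D^k f_\sigma(y)|\le C'\sigma^{-\underline{\alpha}}\|x-y\|$, using the elementary inequality $\min(A,B\|x-y\|)\le A^{1-\beta}(B\|x-y\|)^{\beta}$, so that
\[
\frac{|D^k f_\sigma(x)-D^k f_\sigma(y)|}{\|x-y\|^{\beta}} \lesssim_{\alpha,d} \sigma^{(1-\underline{\alpha})(1-\beta)+(-\underline{\alpha})\beta} = \sigma^{1-\underline{\alpha}-\beta} = \sigma^{-\alpha+1}.
\]
Summing over the finitely many contributions yields the claimed bound, with the $(1\vee\sup_\cX\|x\|)$ factor coming solely from the $|k|=0$ term.

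The routine step is the Hermite bookkeeping; the only mildly delicate point is verifying that the interpolation exponents collapse exactly to $-\alpha+1$, which I would check by keeping $\underline{\alpha}$ and $\beta$ symbolic until the final simplification.
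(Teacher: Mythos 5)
Your proposal is correct and differs from the paper's argument in one genuine way, which is worth noting. To bound the derivatives $D^{k}f_{\sigma}$ the paper (Lemma~\ref{lem: derivative}) proves that $D^{k'}f_{\sigma}$ is Lipschitz with constant $\sigma^{-|k'|}\sqrt{|k'|!}$ and then recovers $|D^{k}f_{\sigma}|\le\sigma^{-|k|+1}\sqrt{(|k|-1)!}$ by induction on $|k|$, starting from $|\nabla f_{\sigma}|\le 1$. You instead exploit the cancellation $\int D^{k}\varphi_{\sigma}=0$ for $|k|\ge 1$, center $f$ at $x$, and use the Lipschitz bound $|f(x-z)-f(x)|\le\|z\|$ together with the scaling $\int\|z\|\,|D^{k}\varphi_{\sigma}(z)|\,\mathrm{d}z=\sigma^{1-|k|}\int\|u\|\,|H_{k}(u)|\varphi_{1}(u)\,\mathrm{d}u$. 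This hits the same $\sigma^{1-|k|}$ rate in one step, without the induction, at the cost of a less explicit constant (you get $L^{1}$ Hermite moments rather than the paper's $\sqrt{(|k|-1)!}$). The interpolation step for the H\"older seminorm in the fractional case is algebraically identical to the paper's (both amount to $|D^{k}f_{\sigma}(x)-D^{k}f_{\sigma}(y)|=|D^{k}f_{\sigma}(x)-D^{k}f_{\sigma}(y)|^{\beta}\cdot|D^{k}f_{\sigma}(x)-D^{k}f_{\sigma}(y)|^{1-\beta}$ with one factor controlled by the Lipschitz bound and the other by the uniform bound), and you correctly observe that the growth factor $1\vee\sup_{\cX}\|x\|$ enters only through the $|k|=0$ term.

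One small imprecision: you claim $\sigma^{-|k|+1}\le\sigma^{-\alpha+1}$ ``for all $1\le|k|\le\underline{\alpha}+1$,'' but this fails at $|k|=\underline{\alpha}+1$ when $\alpha$ is non-integer (then $\underline{\alpha}+1>\alpha$). This does not create a gap: the supremum part of $\|\cdot\|_{\alpha,\cX}$ only ranges over $|k|\le\underline{\alpha}$, where the inequality does hold, and the $|k|=\underline{\alpha}+1$ bound is used only inside the interpolation, where it is raised to the power $\alpha-\underline{\alpha}$ and the exponents collapse to $1-\alpha$ as you compute. You may simply want to restrict that sentence to $|k|\le\underline{\alpha}$.
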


The following metric entropy bound from \citet[Theorem 2.7.1]{VaWe1996} will be subsequently used.

\begin{lemma}[Metric entropy bound for H\"{o}lder ball]
\label{lem: metric entropy}
Suppose $\mathcal{X}\subset\R^d$ is bounded and convex with nonempty interior. For $\alpha \geq 1$ and $M > 0$, let $C^{\alpha}_M(\cX)$ be the set of continuous functions $f:\cX\to\R$ with $\|f\|_\alpha \leq M$.
Then, for any $0 < \epsilon \le 1$, we have 
$\ \log N \big(\epsilon M,C_{M}^{\alpha}(\mathcal{X}), \| \cdot \|_{\infty} \big) \lesssim_{d,\alpha,\diam (\mathcal{X})} \epsilon^{-d/\alpha}$.
\end{lemma}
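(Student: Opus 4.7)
By rescaling $f \mapsto f/M$, it suffices to establish $\log N(\epsilon, C^\alpha_1(\cX), \|\cdot\|_\infty) \lesssim_{d,\alpha,\diam \cX} \epsilon^{-d/\alpha}$ for all $\epsilon \in (0,1]$. The strategy is a classical piecewise-polynomial covering. First, I would partition $\cX$ into a grid of cubes $\{Q_j\}_{j=1}^{N}$ of side length $h \asymp \epsilon^{1/\alpha}$; boundedness of $\cX$ gives $N \lesssim_{\diam \cX} h^{-d} \asymp \epsilon^{-d/\alpha}$. For any $f \in C^\alpha_1(\cX)$ and cube center $x_j \in Q_j$, Taylor's theorem with Hölder-type remainder (using $\|f\|_\alpha \le 1$) guarantees that the Taylor polynomial $P_j^f(x) := \sum_{|k| \le \underline\alpha} (D^k f(x_j)/k!)\,(x-x_j)^k$ satisfies $\sup_{x \in Q_j}|f(x) - P_j^f(x)| \lesssim h^\alpha \asymp \epsilon$. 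Hence every $f \in C^\alpha_1(\cX)$ is $O(\epsilon)$-approximated uniformly by a piecewise polynomial encoded by the list of coefficients $\{D^k f(x_j)/k! : |k| \le \underline\alpha,\ 1 \le j \le N\}$.

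Next, I would discretize these coefficients. Each is bounded by a constant depending only on $d,\alpha$, and there are $D_\alpha := \binom{d+\underline\alpha}{d}$ per cube. Discretizing each on a grid of spacing $\delta \asymp \epsilon$ yields $\lesssim \epsilon^{-D_\alpha}$ discrete polynomials per cube, with $\sup_{Q_j}|P_j^f - \widetilde P_j| \lesssim \epsilon$ for the discretized polynomial $\widetilde P_j$. Gluing the $\widetilde P_j$ across cubes produces an $O(\epsilon)$-uniform cover of $C^\alpha_1(\cX)$ of cardinality at most $(\epsilon^{-D_\alpha})^N$, giving the preliminary bound $\log N_{\mathrm{cover}} \lesssim \epsilon^{-d/\alpha} \log(1/\epsilon)$.

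The main obstacle is removing the extraneous $\log(1/\epsilon)$ factor to reach the sharp rate claimed in the statement. I would do so by a telescoping multiscale refinement: construct the cover coarse-to-fine using nested grids of side $h_j = 2^{-j} h_0$, and at scale $j$ discretize only the residual $f - g_{j-1}$, whose sup norm is $O(h_{j-1}^\alpha)$ from the previous approximation. Because the required precision at scale $j$ shrinks geometrically with $h_j^\alpha$, the logarithm of the number of new choices at scale $j$ scales like $h_j^{-d}$ (the extra log is absorbed into the geometric decay), and summing over $j = 0, 1, \ldots, O(\log(1/\epsilon))$ yields a geometric series dominated by its final term of order $\epsilon^{-d/\alpha}$. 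This is the essence of the Kolmogorov–Tikhomirov argument underlying \cite[Theorem 2.7.1]{VaWe1996}; the remaining work is careful bookkeeping of the scale-by-scale errors and cardinalities so that the cumulative uniform error stays $O(\epsilon)$ while the total log-cardinality stays $O(\epsilon^{-d/\alpha})$. An alternative route that sidesteps the log factor is to expand $f$ in a fixed tensor-product polynomial (or $B$-spline) basis of degree $\underline\alpha$ on each cell, note that the ambient finite-dimensional space has dimension $O(\epsilon^{-d/\alpha})$, and invoke a volumetric covering bound for the unit ball of this space at radius $\epsilon$ under the uniform norm.
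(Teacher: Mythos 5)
The paper does not re-prove this lemma; it cites \cite[Theorem 2.7.1]{VaWe1996}, whose proof is the single-scale Kolmogorov--Tikhomirov argument: fix a mesh of width $h\asymp\epsilon^{1/\alpha}$, discretize the vectors $(D^k f(x_i))_{|k|\le\underline\alpha}$ at mesh points $x_i$ with step $\asymp h^{\alpha-|k|}$, and observe that once the discretized derivative values are fixed at one mesh point, the H\"older condition constrains the discretized values at an adjacent mesh point to only $O(1)$ admissible choices. The $\log(1/\epsilon)$ penalty therefore appears once (for the first mesh point) and is dominated by the $O(\epsilon^{-d/\alpha})$ contribution from propagating across the remaining mesh points. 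Your main route is a genuinely different multiscale telescoping construction: refine the grid geometrically, and at scale $j$ discretize only the residual left over from scale $j-1$, which has shrunk by $2^{-\alpha}$ together with its admissible discretization step, so each scale contributes $O(1)$ bits per cell per coefficient and the sum is a geometric series dominated by the finest scale. Both yield the sharp rate; your version is less elementary but generalizes more transparently to anisotropic or wavelet-indexed classes, and it avoids the ``spanning-tree ordering'' bookkeeping of the single-scale argument.

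The ``alternative route'' you offer at the end has a genuine gap. A volumetric covering bound for the \emph{unit ball} of a $D$-dimensional normed space at radius $\epsilon$ gives $\log N\asymp D\log(1/\epsilon)$, not $D$; with $D\asymp\epsilon^{-d/\alpha}$ this reproduces the $\log(1/\epsilon)$ factor rather than removing it. To make a volumetric argument yield the sharp rate you would have to cover not the unit ball of the piecewise-polynomial space but the \emph{image} of $C^\alpha_1(\cX)$ under the coefficient map, and exploit that this image is an extremely eccentric slab (adjacent-cell coefficient differences are $O(h^\alpha)$, not $O(1)$), so that the volume of its $\epsilon$-fattening divided by the volume of an $\epsilon$-ball is only $e^{O(\epsilon^{-d/\alpha})}$. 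Carrying this out essentially reconstructs the neighbor-constraint counting of Kolmogorov--Tikhomirov and is not a shortcut. The multiscale route you lead with is the one worth developing.
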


We are now in position to prove Theorem \ref{thm: CLT for W1}, which applies Theorem 1.1 in \citet{VaWe1996} to the function class $\cF_\sigma$ to show that it is $P$-Donsker. We begin with noting that the function class $\cF_\sigma$ has envelope $F(x):= \|x\| + \sigma \sqrt{d}$, which is $P$-integrable as required under the assumed moment conditions. Indeed, for any $f \in \mathsf{Lip}_{1,0}$, since $|f(y)| \le |f(0)| + \|y\| =\| y \|$, we have
\[
\begin{split}
|f_{\sigma}(x)| &\le \int \| y \| \gauss(x-y) \dd y \le \int \big(\| x \| + \| x-y \|\big) \gauss (x-y) \dd y \\
&\le \| x  \| + \int \| y \| \gauss (y) \dd y \le \| x \| + \left ( \int_{\R^{d}} \| y \|^{2} \gauss(y) \dd y \right )^{1/2} = \| x \| + \sigma \sqrt{d}. 
\end{split}
\]

Next, for each $j$, consider the restriction of $\cF_\sigma$ to $I_{j}$, denoted as ${\cF}_{j}= \{ f \mathds{1}_{I_{j}} : f \in {\cF_\sigma}\}$.
To invoke \citet[Theorem 1.1]{VaWe1996}, we have to verify that each $\cF_{j}$ is $P$-Donsker and to bound $\E[\| \GG_n \|_{{\cF}_{j}}]$, where $\GG_n := \sqrt{n}(P_{n}-P)$.
In view of Lemma~\ref{lem: fractional derivatives},  ${\cF}_{j}$ can be regarded as a subset of $C_{M'_j}^{\alpha}(I_{j})$ with $\alpha > d/2$ and $M_{j}' =\sigma^{-\alpha+1} C_{\alpha,d} \sup_{I_j}\|x\|$. 
Lemma~\ref{lem: metric entropy} then implies that the $L^{2}(Q)$-metric entropy of ${\cF}_{j}$, for any probability measure $Q\in\cP(\R^{d})$, can be bounded~as 
\[
\log N\left(\epsilon M_{j}' Q(I_{j})^{1/2}, {\cF}_{j}, L^{2}(Q) \right) \lesssim_{\alpha,d,K} \epsilon^{-d/\alpha},
\]
where $K = \sup_{j} \diam(I_j) < \infty$. For any choice of $\alpha > d/2$, the square root of the RHS is integrable (w.r.t. $\epsilon$) around $0$, so that $\cF_{j}$ is $P$-Donsker by Theorem 2.5.2 in \citet{VaWe1996}. By Theorem 2.14.1 in \citet{VaWe1996} we further obtain
\be\label{eq: moment}
\E[\| \GG_n \|_{{\cF}_{j}}] \lesssim_{\alpha,d,K} M_{j}' P(I_{j})^{1/2} \lesssim_{\alpha,d} \sigma^{-\alpha + 1}M_{j} P(I_{j})^{1/2},
\ee
with $M_{j} = 1\vee\sup_{I_{j}}\|x\|$. By assumption, the RHS is summable over $j$. 

By Theorem 1.1 in \citet{vanderVaart1996} we conclude that ${\cF_\sigma}$ is $P$-Donsker, which implies that there exists a tight version of $P$-Brownian bridge process $G_{P}$ in $\ell^{\infty}({\cF_\sigma})$ such that $\big(\GG_n(f)\big)_{f \in \cF_\sigma}$ converges weakly in $\ell^{\infty}({\cF_\sigma})$ to $G_{P}$. 
Finally, the continuous mapping theorem yields that
\[\sqrt{n} \gwass (P_{n},P) = \| \GG_n\|_{{\cF}_\sigma}\stackrel{w}{\to} \| G_{P} \|_{{\cF}_\sigma}= \big\|G_{P}^{\mspace{1mu}\sigma}\big\|_{\mathsf{Lip}_{1,0}},
\]
where $G_{P}^{\mspace{1mu}\sigma} (f):=G_{P} (f \ast \gauss)$. By construction, the Gaussian process $\big(G_{P}^{\mspace{1mu}\sigma}(f)\big)_{f \in \mathsf{Lip}_{1,0}}$ is tight in $\ell^{\infty}(\mathsf{Lip}_{1,0})$. The moment bound for $\alpha > d/2$ follows from summing up the moment bounds for~${\cF}_{j}$.

\subsection{Proof of Theorem \ref{thm: SWD small sigma Berry-Esseen type rate}}
\label{SUBSEC: SWD Berry-Esseen type rate proofs}

The statistic $\sqrt{n}\gwass(P_n,P)$ and its limit $L_P^{\mspace{1mu}\sigma}$ are, respectively, the suprema of the empirical process and its Gaussian process limit over the function class $\cF_\sigma$. Given a finite $L^2(P)$ $\delta$-net of functions covering $\cF_\sigma$, the difference in the distribution of these suprema can be bound by their difference w.r.t. the $\delta$-net plus approximation error terms. Formally, if $f_1, \dots, f_{N_{\sigma, \delta}}$ is a minimal $\delta$-net as above, where $N_{\sigma, \delta} = N\big(\delta, \cF_\sigma, L^2(P)\big)$, then, for any set $A \in \cB(\RR)$,
\begin{align*}\label{eq: berry esseen decomposition}
    \PP\Big(&\,\|\GG_n\|_{\cF_\sigma} \in A\Big) - \PP\Big(\|G_P\|_{\cF_\sigma} \in A^{3\eta}\Big)\\
    &\leq \underbrace{\PP\Big(\big|\|G_P\|_{\cF_\sigma} - \max_{1\leq i \leq N_{\sigma, \delta}} G_P(f_i)\big| > \eta \Big)}_{\text{(I)}} + \underbrace{\PP\Big(\big|\|\GG_n\|_{\cF_\sigma} - \max_{1\leq i \leq N_{\sigma, \delta}} \GG_n(f_i)\big| > \eta \Big)}_{\text{(II)}} \\
    &\qquad\quad+ \underbrace{\PP\Big(\max_{1\leq i \leq N_{\sigma, \delta}} \GG_n(f_i) \in A^{\eta}\Big) - \PP\Big(\max_{1\leq i \leq N_{\sigma, \delta}}G_P(f_i) \in A^{2\eta}\Big)}_{\text{(III)}}.\numberthis
\end{align*}

We wish to translate this inequality into a bound on the difference $V_{\sigma,n} - W_{\sigma,n}$, where $\big(V_{\sigma,n},W_{\sigma,n}\big)$ are a coupling of $\|\GG_n\|_{\cF_\sigma}$ and $\|G_P\|_{\cF_\sigma}$. Strassen's theorem allows us to ensure the existence of such couplings. (see \citet[Lemma 4.1]{ChChKa2014} for the version stated below).

\begin{lemma}[Strassen's theorem]\label{lem: Strassen's theorem}
Let $X,Y$ be real random variables. If $\PP(X \in A) \leq \PP(Y \in A^\delta) + \epsilon$ for all $A \in \calB(\RR)$, then there exists a coupling $(\widetilde{X},\widetilde{Y})$ of $X$ and $Y$ such that $\PP\big(|\widetilde{X} - \widetilde{Y}| > \delta\big) \leq \epsilon$. 
\end{lemma}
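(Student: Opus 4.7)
The plan is to derive Strassen's theorem as a consequence of Kantorovich-type LP duality for the Borel cost $c(x,y) := \mathds{1}\{|x-y|>\delta\}$ on $\RR^2$. Writing $\mu := \law(X)$ and $\nu := \law(Y)$, constructing a coupling $(\widetilde X,\widetilde Y)$ with $\PP(|\widetilde X - \widetilde Y| > \delta) \leq \epsilon$ is equivalent to exhibiting $\pi \in \Pi(\mu,\nu)$ with $\int c\,d\pi \leq \epsilon$, i.e., showing that the maximum mass any coupling places on the strip $D_\delta := \{(x,y):|x-y|\leq \delta\}$ is at least $1-\epsilon$.

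First, I would reduce to finitely supported marginals: approximate $\mu,\nu$ by discrete $\mu_n \stackrel{w}{\to} \mu$ and $\nu_n \stackrel{w}{\to} \nu$ that inherit, with parameters tending to $(\delta,\epsilon)$, the Strassen-type bound. For discrete marginals the coupling problem becomes a finite-dimensional LP, and K\"onig-Hall / max-flow-min-cut yields the duality identity
\[
\max_{\pi \in \Pi(\mu_n,\nu_n)} \pi(D_\delta) \ =\ 1 - \sup_{A \text{ Borel}} \big(\mu_n(A) - \nu_n(A^\delta)\big).
\]
The hypothesis bounds the right-hand side by $\epsilon_n \to \epsilon$, producing near-optimal couplings $\pi_n$ with $\pi_n(D_\delta) \geq 1-\epsilon_n$. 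Tightness of the marginals then yields a subsequential weak limit $\pi_n \stackrel{w}{\to} \pi \in \Pi(\mu,\nu)$, and the Portmanteau theorem applied to the open set $\{|x-y|>\delta\}$ gives $\pi(\{|x-y|>\delta\}) \leq \liminf_n \pi_n(\{|x-y|>\delta\}) \leq \epsilon$. Taking $(\widetilde X,\widetilde Y) \sim \pi$ completes the construction.

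The principal obstacle is a clean justification of the discrete duality identity and its transfer through the approximation, since $c$ is only Borel measurable (not continuous) and neither marginal is compactly supported. I would handle both issues simultaneously by truncating $\mu,\nu$ to large compact sets before discretizing, absorbing the lost mass into an inflated $\epsilon_n$ that still tends to $\epsilon$ in the limit. Alternatively and more efficiently, one may invoke the general Polish-space version of Strassen's theorem (e.g., Theorem 11.6.2 in Dudley's \emph{Real Analysis and Probability}), which packages these measurability issues into a single result specializing immediately to $\RR$. Since the statement is quoted from \cite{ChChKa2014} as an auxiliary lemma, the natural presentation in this paper is simply a citation rather than a re-derivation.
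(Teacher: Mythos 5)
The paper does not prove this lemma; it is quoted verbatim from \cite[Lemma 4.1]{ChChKa2014}, and you correctly identify that a citation is the natural presentation here. Your self-contained sketch is a genuine (if different) route, and it is essentially sound: Strassen's theorem for the cost $\mathds{1}\{|x-y|>\delta\}$ is indeed a consequence of LP/Kantorovich duality, and the Polish-space version in Dudley's book (Theorem 11.6.2) packages exactly the measurability and approximation issues you flag. One small imprecision in the passage to the limit: when you discretize by moving mass a distance at most $\eta_n$, the inherited bound is for the \emph{inflated} width $\delta+2\eta_n$, so the cheapest way to close the argument is to fix $\delta'>\delta$, note that $\pi_n(\{|x-y|>\delta'\})\le\epsilon_n$ for all large $n$, apply Portmanteau on the open set $\{|x-y|>\delta'\}$, and then let $\delta'\downarrow\delta$ using continuity from below of $\pi$. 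As written, your Portmanteau step bounds $\pi(\{|x-y|>\delta\})$ directly by $\liminf_n\pi_n(\{|x-y|>\delta\})$, but the quantity you control for finite $n$ is $\pi_n$ on the strictly smaller open set. This is fixable, as you partly anticipate, but it is the one spot where the argument needs an extra line. In short: correct idea, correct acknowledgement that the paper just cites, minor technical gap in the limiting step.
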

In fact, we will derive an $O_\PP$ rate for the difference $|V_n-W_n|$, where $V_n:= V_{n,\sigma_n,\delta_n,\eta_n}$ and $W_n:= W_{n,\sigma_n,\delta_n,\eta_n}$, for a smoothing parameter $\sigma=\sigma_n$ that is allowed to decay at a slow enough rate. To that end, first observe that for each fixed $\sigma > 0$, the class $\cF_\sigma$ is pre-Gaussian, which implies the existence of a tight version of the limit process $\big(G_P(f)\big),\ f\in\cF_\sigma$ and ensures measurability of $\|G_P\|_{\cF_\sigma}$. Since $\cF_\sigma$ is separable w.r.t. pointwise convergence, $\|\GG_n\|_{\cF_\sigma}$ is also measurable.

Thus, if we establish a non-asymptotic (in $n$) bound on the RHS of \eqref{eq: berry esseen decomposition} and characterizes the exact dependence on $\sigma$, we may set $\sigma = \sigma_n$ and still obtain $(V_n,W_n)$ on $(\Omega,\cA,\PP)$, whose difference is bounded in probability. Combining the bounds for each $n$, we get an $O_{\PP}$ rate for $|V_n - W_n|$. The rate bound on the Prokhorov distance follows more directly from \eqref{eq: berry esseen decomposition} without going through Lemma~\ref{lem: Strassen's theorem}.

We begin with a few technical lemmas that will be required throughout the proof; derivations of the lemmas are given in Appendix \ref{SUBSEC: SWD Berry-Esseen Rate supp proofs}. The first is a bound on $L^2$ entropy classic of functions with unbounded support. This provides control on $N_{\sigma,\delta}$ for a given discretization error level $\delta$.

\begin{lemma}[Example 19.9 in \citet{vanderVaart1998_asymptotic}]\label{lem: unbdd_l2_entropy}
\label{lem:partition_entropy}
Let $\{I_j\}_{j=1}^\infty$ be an enumeration of unit cubes in $\RR^d$ with vertices on integer lattice points. If $\cF$ is a function class such that for any $j\in\NN$ the restriction ${\cF}_{j}= \{ f \mathds{1}_{I_{j}} : f \in {\cF}\}$ belongs to the H\"{o}lder class $C^\alpha_{M'_j}(I_j)$, for $\alpha > 0$ and $M_j>0$, i.e., $\cF_j \subset C^\alpha_{M'_j}(I_j)$, then
\be\label{eq: L2P bracketing entropy bound}\log N_{[\,]}(\delta, \cF, L^2(P)) \lesssim_{\alpha,d} \left(\frac{1}{\delta}\right)^\frac{d}{\alpha} \left[\sum_{j=1}^\infty \big((M'_j)^2 P(I_j)\big)^\frac{d}{d+2\alpha}\right]^\frac{d+2\alpha}{2\alpha}.\ee
\end{lemma}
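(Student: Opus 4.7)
The plan is to reduce the claim to the classical bracketing entropy bound for H\"older balls on a compact cube and then assemble a global bracketing by taking one local bracketing on each $I_{j}$ and combining them, with the sizes of the local bracketings chosen by an optimization argument. Recall that for a unit cube $I$ and $M>0$, the standard bound (e.g., Corollary 2.7.2 in \cite{VaWe1996}) gives
\[
\log N_{[\,]}\big(\epsilon, C_{M}^{\alpha}(I), L^{\infty}\big) \lesssim_{\alpha,d} (M/\epsilon)^{d/\alpha}.
\]
For each $j$, pick a level $\delta_{j}>0$ to be optimized, and construct an $L^{\infty}$-bracketing of $\cF_{j}\subset C^{\alpha}_{M'_j}(I_{j})$ of cardinality $N_{j}\le \exp\!\big(C_{\alpha,d}(M'_{j}/\delta_{j})^{d/\alpha}\big)$. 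Since every $f\in\cF_{j}$ is supported on $I_{j}$, an $L^{\infty}$-$\delta_{j}$-bracket $[\ell_{j},u_{j}]$ automatically satisfies $\|u_{j}-\ell_{j}\|_{L^{2}(P)}\le \delta_{j}P(I_{j})^{1/2}$.

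Next I would assemble a global bracketing. Since the $I_{j}$'s partition $\RR^{d}$, any $f\in\cF$ decomposes as $f=\sum_{j}f\mathds{1}_{I_{j}}$, and if $[\ell_{j},u_{j}]$ brackets $f\mathds{1}_{I_{j}}$ in the local net, then $\big[\sum_{j}\ell_{j},\sum_{j}u_{j}\big]$ brackets $f$ with
\[
\Big\|\sum_{j}u_{j}-\sum_{j}\ell_{j}\Big\|_{L^{2}(P)}^{2}=\sum_{j}\|u_{j}-\ell_{j}\|_{L^{2}(P)}^{2}\le \sum_{j}\delta_{j}^{2}P(I_{j}),
\]
where the disjoint supports remove cross terms. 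Thus, to obtain an $L^{2}(P)$-$\delta$-bracketing of $\cF$, it suffices to choose $(\delta_{j})$ with $\sum_{j}\delta_{j}^{2}P(I_{j})\le \delta^{2}$, and the resulting cardinality is $\prod_{j}N_{j}$, so
\[
\log N_{[\,]}\big(\delta,\cF,L^{2}(P)\big)\,\lesssim_{\alpha,d}\,\sum_{j}(M'_{j})^{d/\alpha}\delta_{j}^{-d/\alpha}.
\]

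The final step is to minimize $\sum_{j}(M'_{j})^{d/\alpha}\delta_{j}^{-d/\alpha}$ under the constraint $\sum_{j}\delta_{j}^{2}P(I_{j})\le \delta^{2}$. A short Lagrange multiplier computation (or H\"older's inequality with dual exponents $\frac{d+2\alpha}{2\alpha}$ and $\frac{d+2\alpha}{d}$) yields the optimum
\[
\delta_{j}\,\propto\,\Big((M'_{j})^{d/\alpha}/P(I_{j})\Big)^{\alpha/(d+2\alpha)},
\]
and substituting back produces exactly the bound
\[
\log N_{[\,]}\big(\delta,\cF,L^{2}(P)\big)\,\lesssim_{\alpha,d}\,\delta^{-d/\alpha}\Big[\sum_{j}\big((M'_{j})^{2}P(I_{j})\big)^{d/(d+2\alpha)}\Big]^{(d+2\alpha)/(2\alpha)}.
\]
The main obstacle is really the bookkeeping in the optimization, in particular verifying that the conjugate exponents produce the stated power $\frac{d+2\alpha}{2\alpha}$; everything else reduces to the classical local bound, disjoint-support orthogonality, and an elementary convexity argument. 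A minor subtlety is the treatment of cube boundaries, which have Lebesgue measure zero and can be assigned arbitrarily to one neighbor without affecting the $L^{2}(P)$ norms.
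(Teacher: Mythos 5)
Your argument is correct and is, in effect, a reconstruction of the classical proof of the result the paper invokes: the paper's proof of this lemma simply says ``See Theorem 2.7.4 in van der Vaart and Wellner (1996),'' and your steps (local $L^\infty$ H\"older bracketing on each cube, disjoint-support orthogonality giving $\|\sum_j(u_j-\ell_j)\|_{L^2(P)}^2=\sum_j\|u_j-\ell_j\|_{L^2(P)}^2\le\sum_j\delta_j^2 P(I_j)$, and the H\"older-conjugate optimization over $(\delta_j)$ with exponents $\tfrac{d+2\alpha}{2\alpha}$ and $\tfrac{d+2\alpha}{d}$) are exactly the argument used there specialized to $r=2$ and unit cubes. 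The exponent bookkeeping checks out: at the optimum $\delta_j\propto\big((M'_j)^{d/\alpha}/P(I_j)\big)^{\alpha/(d+2\alpha)}$ one indeed obtains $\delta^{-d/\alpha}\big[\sum_j((M'_j)^2P(I_j))^{d/(d+2\alpha)}\big]^{(d+2\alpha)/(2\alpha)}$.
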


The next lemma relates the probability law of the maximum of (centered) i.i.d. random vectors to that of the maximum of i.i.d. Gaussians with same covariance matrices. This will enable bounding term (III) in \eqref{eq: berry esseen decomposition}.

\begin{lemma}[Theorem 3.1 in \citet{ChChKa2016} simplified]
    \label{lem: gauss_approx_discretized}
    Suppose that $X_1, \dots, X_n$ are i.i.d. zero mean $\RR^p$-valued random vectors $(p \geq 2)$ with covariance matrix $\Sigma$ and coordinate-wise finite absolute third moments, i.e., $\EE\big[|X_{ij}|^3\big] < \infty$, for all $1 \leq i \leq n$ and $1 \leq j \leq p$. Consider the statistic $Z:= \max_{1\leq j\leq p} \frac{1}{\sqrt{n}}\sum_{i=1}^n X_{ij}$. Let $Y\sim\cN(0,\Sigma)$ and define $\widetilde{Z}:= \max_{1\leq j\leq p} Y_j$. Then, for every $\eta > 0$ and $A\in\cB(\RR)$, we have
    \[\PP(Z \in A) \leq \PP(\widetilde{Z} \in A^{C_7 \eta}) + \frac{C_8 (\log  p)^2}{\eta^3 \sqrt{n}}\big(L_n + M_{n,X}(\eta) + M_{n,Y}(\eta)\big),\]
    where $C_7, C_8$ are universal positive constants, $L_n = \max_{1\leq j \leq p} \EE\left[|X_{ij}|^3\right]$, and
    \begin{align*}
    &M_{n,X}(\eta) =  \EE\left[\max_{1\leq j \leq p} |X_{ij}|^3 \ind_{\left\{\max_{1\leq j \leq p} |{X}_{ij}| > \eta\sqrt{n}/\log p\right\}}\right],\\
    &M_{n,Y}(\eta) = \EE\left[\max_{1\leq j \leq p} |Y_{j}|^3 \ind_{\left\{\max_{1\leq j \leq p} |{Y}_{j}| > \eta\sqrt{n}/\log p\right\}}\right].
    \end{align*}
\end{lemma}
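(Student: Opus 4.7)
The plan is to obtain this lemma as a direct specialization of Theorem 3.1 in \cite{ChChKa2016}, which already supplies a high-dimensional Gaussian approximation result for the maximum of sums of centered i.i.d. random vectors. The cited result works at the level of distributions on hyperrectangles (or, more generally, on classes of smoothed indicators), controls the Kolmogorov-type distance between the law of $Z = \max_{j} n^{-1/2} \sum_{i} X_{ij}$ and that of $\widetilde Z = \max_j Y_j$, and the bound it provides is already expressed in terms of the three quantities $L_n$, $M_{n,X}(\eta)$, and $M_{n,Y}(\eta)$ appearing in our statement. So the task reduces to reading off the specialization to the one-dimensional class of Borel events of the form $\{Z \in A\}$ rather than the larger class of hyperrectangles.

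First, I would verify that the hypotheses of \cite[Theorem 3.1]{ChChKa2016} are met: the $X_i$'s are zero-mean, i.i.d., with finite coordinatewise third moments, and $Y \sim \cN(0,\Sigma)$ matches the covariance of the sample mean. Since the maximum functional $x \mapsto \max_{j\leq p} x_j$ is $1$-Lipschitz in the $\ell^\infty$ norm, events of the form $\{Z \in A\}$ with $A \in \cB(\R)$ are preimages under this Lipschitz map of hyperrectangles (or limits thereof), so the original theorem applies and the $C_7 \eta$ enlargement of $A$ in our statement corresponds to the smoothing parameter in \cite{ChChKa2016}.

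Second, I would simply collect the terms: the original bound has a leading factor of $(\log p)^2 / (\eta^3 \sqrt n)$ multiplying the sum of $L_n$, a truncated third moment of $X$, and a truncated third moment of $Y$, with truncation at the scale $\eta \sqrt n / \log p$. Setting the truncation threshold exactly at $\eta \sqrt n / \log p$ and absorbing absolute numerical constants into $C_7$ and $C_8$ yields the stated form. The assumption $p \ge 2$ ensures $\log p \ge \log 2 > 0$ and avoids degeneracies in the truncation level.

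The main (and essentially only) obstacle is bookkeeping: confirming that the constants $C_7$ and $C_8$ can be taken universal (independent of $n$, $p$, $\Sigma$, and the law of $X_1$) once one fixes the event class to be Borel subsets of $\R$ rather than hyperrectangles in $\R^p$. This is already explicit in \cite{ChChKa2016} since the dependence on dimension enters only through the $(\log p)^2$ factor and the explicit moment quantities $L_n$, $M_{n,X}(\eta)$, $M_{n,Y}(\eta)$. No further probabilistic argument is needed beyond citing the original theorem and restricting to the univariate Borel $\sigma$-algebra.
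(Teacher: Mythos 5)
Your proposal matches the paper's treatment: the paper does not prove this lemma at all but simply labels it ``Theorem 3.1 in \cite{ChChKa2016} simplified'' and invokes it as a citation, which is exactly what you do. One caveat: your middle paragraph's detour through Lipschitz maps and ``preimages of hyperrectangles (or limits thereof)'' is both unnecessary and not quite right --- the preimage under $x\mapsto\max_j x_j$ of a general Borel $A\subset\R$ is not a hyperrectangle nor a limit of them (only the sublevel sets $\{\max_j x_j\le t\}$ are), so if the cited theorem were stated only for rectangle probabilities this step would fail; in fact the cited Gaussian comparison result is already phrased for arbitrary Borel $A\subset\R$ in the max-of-coordinates setting, and the reduction is literally a matter of reading off notation, as your closing paragraph correctly observes.
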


We also use a Fuk-Nagev type inequality (Theorem 2, \citet{Adamczkak2010}) for empirical process concentration to bound the term (II) in \eqref{eq: berry esseen decomposition}.

\begin{lemma}[Fuk-Nagev type inequality]\label{lem: fuk-nagev}
Let $\cF$ be a function class $f : S \to \RR$ with a measurable envelope $F$, that is separable w.r.t. pointwise convergence, and let $P\in\cP(S)$. Suppose that $P f=0$ for all $f\in\cF$, and that $F \in L^{p\vee 2}(P)$. 
For $X_1,\ldots,X_n$ i.i.d. according to $P$ define $Z:=\sup_{f\in\cF}\big|\sum_{i=1}^nf(X_i)\big|$ and let $\sigma^2 > 0$ be any positive constant such that $\sigma^2 \geq \sup_{f \in \cF} Pf^2$. Then 
\[\PP\big(Z \geq (1 + \alpha)\EE[Z] + x\big) \leq e^{-cx^2/(n\sigma^2)} + \frac{c'\EE[M^p]}{x^p},\quad\forall\, \alpha,x > 0,\]
where $M := \max_{1\leq i \leq n} F(X_i)$ and $c,c'$ are positive constants that depend only on $p$ and $\alpha$.
\end{lemma}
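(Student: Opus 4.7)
The statement is cited directly from \cite{Adamczkak2010} (Theorem 2 therein), so the cleanest route is to verify that our hypotheses match those in the reference and invoke that theorem; the substantive assumptions are that $\cF$ is separable w.r.t.\ pointwise convergence (so the supremum is measurable), that $Pf=0$, and that the envelope lies in $L^{p\vee 2}(P)$. Since nothing in our later applications requires a custom constant or a sharper form, the plan is simply to state the matching of assumptions and cite the result. Below I sketch the \emph{structural} argument one would carry out if reproducing the proof from scratch, which follows the standard truncation-plus-Talagrand recipe underlying all Fuk--Nagaev-type bounds in the empirical-process literature.

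First, fix a truncation level $\tau>0$ (to be optimized later as a function of $x$ and $\alpha$) and decompose each summand into a bounded and an unbounded part via the envelope: $f(X_i) = f(X_i)\ind\{F(X_i)\le\tau\} + f(X_i)\ind\{F(X_i)>\tau\}$. Recentering the truncated part (the associated bias is absorbed into the final constants) produces two processes $Z^{(1)}$ (bounded) and $Z^{(2)}$ (unbounded), with $Z\le Z^{(1)}+Z^{(2)}$.

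Second, apply Talagrand's concentration inequality (in its Bousquet form) to $Z^{(1)}$. The per-summand bound is $2\tau$, and the weak-variance term is still controlled by $n\sigma^2$, so one obtains
\[
\PP\bigl(Z^{(1)} \ge (1+\alpha/2)\E[Z^{(1)}] + x/2\bigr) \le \exp\!\left(-\frac{c_\alpha x^2}{n\sigma^2 + \tau x}\right).
\]
Choosing $\tau$ so that the $\tau x$ term is dominated by $n\sigma^2$ on the event of interest recovers the clean sub-Gaussian factor $e^{-cx^2/(n\sigma^2)}$.

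Third, bound the tail part $Z^{(2)}$ by $\sum_{i=1}^n F(X_i)\ind\{F(X_i)>\tau\}$, and estimate its $p$-th moment via a Hoffmann--J{\o}rgensen / Rosenthal-type inequality for sums of independent nonnegative variables. This yields a bound of the form $\E[(Z^{(2)})^p]\lesssim \E[M^p]$ once $\tau$ is chosen to be a constant multiple of $x$. Markov's inequality then gives $\PP(Z^{(2)}>x/2)\lesssim \E[M^p]/x^p$. Combining the two tail estimates and noting that $\E[Z^{(1)}]\le \E[Z]+o(\E[Z])$ under the assumption $F\in L^{p\vee 2}(P)$, the two tail events merge into the claimed Fuk--Nagaev bound, with $c,c'$ depending only on $p$ and $\alpha$.

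The main obstacle—and the reason one usually defers to the reference rather than reproving—is the bookkeeping in step three: controlling $\E[(Z^{(2)})^p]$ by $\E[M^p]$ requires the sharp Hoffmann--J{\o}rgensen inequality and a delicate optimization of $\tau$ against $x$, and keeping the centering at $(1+\alpha)\E[Z]$ (rather than a larger multiple) further forces one to carefully control the truncation bias. These are precisely the technical steps carried out in \cite{Adamczkak2010}, which we therefore cite as a black box.
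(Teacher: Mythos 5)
Your proposal matches the paper exactly: the lemma is a restatement of Theorem 2 in the cited reference \cite{Adamczkak2010}, and the paper itself offers no proof beyond that citation. The truncation/Talagrand/Hoffmann--J{\o}rgensen sketch you append is a faithful outline of the standard Fuk--Nagaev argument, but since both you and the paper defer to the reference as a black box, the approaches coincide.
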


Finally, we also use the following relation between moments of random variables and discrete sums approximating such moments.
\begin{lemma}[Sufficient moment condition] \label{lem: moment inequality}
Let $\{ \mspace{-1.5mu} I_j \mspace{-1.5mu} \}_{j=1}^\infty$ be an enumeration of unit cubes in $\RR^d$~with vertices on integer lattice points, disjointified such that they form a partition of $\RR^d$. If $P\|x\|_\infty^l = M < \infty$ for any $l > \frac{(d-1)(1-p_2) + p_1 +1}{p_2}$, then we have $\sum_{j=1}^\infty \big(\sup_{I_j}\|x\|\big)^{p_1} P(I_j)^{p_2}$ $\lesssim_{l,d,p_1,p_2}  M^{p_2} < \infty$.
\end{lemma}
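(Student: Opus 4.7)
The plan is to use a shell decomposition of the cube family by distance from the origin in the $\ell^{\infty}$ norm. Index the cubes by lattice points: $I_{k} = [k_{1},k_{1}+1) \times \cdots \times [k_{d},k_{d}+1)$ for $k \in \ZZ^{d}$. Then $\sup_{x \in I_{k}} \|x\| \leq \sqrt{d}\,(\|k\|_{\infty}+1)$, so that $(\sup_{I_{k}} \|x\|)^{p_{1}} \lesssim_{d,p_{1}} (\|k\|_{\infty} \vee 1)^{p_{1}}$. The number of lattice points with $\|k\|_{\infty} = n$ is bounded by $c_{d}\, n^{d-1}$ for $n \geq 1$. Define shells $A_{n} := \{x \in \RR^{d} : n-1 \leq \|x\|_{\infty} < n\}$ for $n \geq 1$ and $A_{0} := \{x : \|x\|_{\infty} < 1\}$, so that every cube $I_{k}$ with $\|k\|_{\infty} = n$ is contained in $A_{n+1}$ (up to benign boundary matching).

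The key step is Jensen's inequality applied to the concave map $t \mapsto t^{p_{2}}$ on $[0,1]$ in the range $p_{2} \in (0,1]$:
\[
\sum_{\|k\|_{\infty}=n} P(I_{k})^{p_{2}} \le (c_{d}\,n^{d-1})^{1-p_{2}} \Big(\sum_{\|k\|_{\infty}=n} P(I_{k})\Big)^{p_{2}} \lesssim_{d,p_{2}} n^{(d-1)(1-p_{2})}\, P(A_{n+1})^{p_{2}}.
\]
For $p_{2} > 1$, the $\ell^{p}$-norm monotonicity $\sum_{j} a_{j}^{p_{2}} \le (\sum_{j} a_{j})^{p_{2}}$ together with $n^{(d-1)(1-p_{2})} \le 1$ yields the same bound. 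Combining with Markov's inequality, which gives $P(A_{n+1}) \leq P(\|x\|_{\infty} \geq n-1) \leq M/(n-1)^{l}$ for $n \ge 2$, and accounting for the low-$n$ terms separately, one obtains
\[
\sum_{j=1}^{\infty} \big(\sup_{I_{j}} \|x\|\big)^{p_{1}} P(I_{j})^{p_{2}} \lesssim_{d,p_{1},p_{2}} M^{p_{2}} + M^{p_{2}} \sum_{n \ge 2} n^{p_{1} + (d-1)(1-p_{2}) - l p_{2}}.
\]

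Convergence of the tail series is equivalent to $l p_{2} - p_{1} - (d-1)(1-p_{2}) > 1$, which is precisely the hypothesis $l > \frac{(d-1)(1-p_{2}) + p_{1} + 1}{p_{2}}$. The only care required is in accounting for cubes that straddle the shell boundaries and in tracking the dependence of the implicit constants on $d$, $p_{1}$, $p_{2}$; these absorb into the dimension-dependent prefactor without affecting the convergence criterion, so there is no substantive obstacle beyond routine bookkeeping.
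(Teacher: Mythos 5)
Your proof is correct for $p_2 \le 1$ and follows essentially the same route as the paper: shell decomposition of the cubes by $\|\cdot\|_\infty$-distance, a $\lesssim_d n^{d-1}$ count of cubes per shell, the power-mean/Jensen inequality to convert $\sum_j P(I_j)^{p_2}$ into $n^{(d-1)(1-p_2)}P(\text{shell})^{p_2}$, and Markov's inequality to close the series. The paper phrases the middle step via the power mean inequality and bounds the final sum by an integral rather than splitting into low- and high-$n$ pieces, but these are cosmetic differences.

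One caveat: your parenthetical claim for $p_2>1$ is incorrect as written. The $\ell^{p}$-monotonicity gives only $\sum_{\|k\|_\infty=n} P(I_k)^{p_2} \le P(A_{n+1})^{p_2}$, which is \emph{weaker} than the claimed $n^{(d-1)(1-p_2)}P(A_{n+1})^{p_2}$ because $n^{(d-1)(1-p_2)}\le 1$ in that regime; the extra cardinality factor cannot be recovered from $\ell^p$-monotonicity. This does not affect the result as used in the paper (and the paper's own power-mean step likewise presumes $p_2\le 1$), since every invocation of this lemma in the paper has $p_2\le 1$, but you should either restrict to $p_2\le 1$ or give a genuine argument for $p_2>1$ rather than assert the same bound holds.
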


We are now ready to prove Theorem \ref{thm: SWD small sigma Berry-Esseen type rate}.
\begin{proof}[Proof of Theorem \ref{thm: SWD small sigma Berry-Esseen type rate}]

\noindent\textbf{Proof of (i):} First, let $\sigma,\delta\in(0,1]$ be arbitrary; these parameters will be specified later. Recall that we fix a smoothness parameter $\alpha > d/2$ and let $a = \alpha - 1,\,b = d/(2\alpha)$. Note that, if $f_1, \dots, f_{N_{\sigma, \delta}}$ is a minimal $L^2(P)$ $\delta$-net of $\cF_\sigma$, then
\be\label{eq: discretization error bounds}
\begin{split}
\left|\|\GG_n\|_{\cF_\sigma} - \max_{1\leq i \leq N_{\sigma, \delta}} \GG_n (f_i)\right| &\leq \sup_{\substack{f,g \in \cF_\sigma: \\ \|f - g\|_{L^2(P)} < \delta}} \big|\GG_n(f-g)\big|,\\
\left|\|G_P\|_{\cF_\sigma} - \max_{1\leq i \leq N_{\sigma, \delta}}G_P (f_i)\right| &\leq \sup_{\substack{f,g \in \cF_\sigma: \\ \|f - g\|_{L^2(P)} < \delta}} \big|G_{P} (f) - G_{P} (g)\big|.
\end{split}
\ee

For the second inequality, we extend $G_P$ to the linear span of $\cF_\sigma$ so that its sample paths are linear, and the RHS may be written as a supremum of $|G_P(f-g)|$. We split the proof into the following steps:
\medskip

\noindent\underline{Step 1} (Bound on metric entropy). Set $K = \sum_{j=1}^\infty \big(M_j^2 P(I_j)\big)^{d/(d+2\alpha)}\big)$. Invoking Lemma~\ref{lem:partition_entropy} with $M_{j}' =\sigma^{-\alpha+1} C_{\alpha,d} \sup_{I_j}\|x\|$ (which is a feasible choice by Lemma~\ref{lem: fractional derivatives}), we obtain 
\be\label{eq: L2P metric entropy bound}\log N\big(\delta, \cF_{\sigma}, L^2(P)\big) \lesssim_{\alpha,d} K \sigma^{-d\frac{\alpha-1}{\alpha}} \delta^{-\frac{d}{ \alpha}} \lesssim_{\alpha,d,M,l} \sigma^{-2ab}\delta^{-2b},\ee
where $K \lesssim_{\alpha,d} M^{d/(d+2\alpha)}$ by Lemma~\ref{lem: moment inequality}.
\medskip

\noindent\underline{Step 2} (Empirical process discretization error). 
Let $\{I_j\}_{j=1}^\infty$ be a suitably disjointed enumeration of unit cubes on integer lattice points in $\R^d$. Define $M_j$ and $M'_j$ as in Theorem~\ref{thm: CLT for W1}, and let $\cF_\sigma(\delta):= \big\{f-g: f,g \in \cF_\sigma,\, \|f - g\|_{L^2(P)} < \delta\big\}$. Note that $\EE\big[\|\GG_n\|_{\cF_\sigma(\delta)}\big] \leq \sum_{j=1}^\infty \EE\big[\|\GG_n\|_{{\cF_j}(\delta)}\big]$, where ${\cF_j}(\delta)= \{f\ind_{I_j}: f \in \cF_\sigma(\delta)\}$. Also set ${\cF}_j = \{f\ind_{I_j}:f \in {\cF_\sigma}\}$. Clearly, $F_j(x) = M'_j\ind_{I_j}(x)$ is an envelope function for ${\cF}_j$. Then, using Lemma~\ref{lem: metric entropy} with $M = M'_j$ and $N = \alpha$ (which are feasible choices by Lemma~\ref{lem: fractional derivatives}), we have, for any probability measure $Q$ on a finite set, $\log N\big(\delta M'_j Q(I_j)^{1/2}, {\cF}_j, L^2(Q)\big) \lesssim_{\alpha,d} \delta^{-2b}$. Observing that
\be \label{eq:difference class entropy}N\left(2\delta M'_j Q(I_j)^{1/2}, {\cF}_j(\delta), L^2(Q)\right) \leq N\left(\delta M'_j Q(I_j)^{\frac{1}{2}}, {\cF}_j, L^2(Q)\right)^2,\ee
we obtain $\log N\big(2\delta M'_j Q(I_j)^{\frac{1}{2}}, \cF_j(\delta), L^2(Q)\big) \lesssim_{\alpha, d} \delta^{-2b}$.

For the difference class $\cF_j(\delta)$, $2F_j$ is an envelope function. Next, we compute the entropy integral $J(\delta, {\cF}_j(\delta), 2F_j)$ as
\be \label{eq:uniform_entropy_integral}J(\delta, \cF_j, 2F_j) \mspace{-3mu} := \mspace{-3mu} \int_0^\delta \mspace{-8mu} \sup_{Q \in \cP_{\mathsf{f}}(\R^d)} \sqrt{\log N\left(\epsilon\|2F_j\|_{L^2(Q)}, \cF_j(\delta), L^2(Q)\right)} \dd\epsilon \lesssim_{\alpha,d} \mspace{-3mu}\int_0^\delta \mspace{-4mu} \epsilon^{-b} \dd \epsilon \lesssim_{\alpha,d} \delta^{1-b},\ee
where $\cP_{\mathsf{f}}(\R^d)\subset\cP(\RR^d)$ denotes the set of discrete measures on finitely many points in $\R^d$. The local maximal inequality for uniform entropy \citep[Theorem 5.2]{ChChKa2014} then implies
\be\EE\big[\|\GG_n\|_{\cF_j(\delta)}\big] \lesssim_{\alpha,d} \left(M'_j P(I_j)^\frac{1}{2}\right)^b \delta^{1 - b} + \frac{1}{\sqrt{n}} (M'_j)^{1 + 2b}P(I_j)^b\delta^{-2b}.\ee

By Lemma~\ref{lem: moment inequality}, $\sum_{j=1}^\infty \big(M_j P(I_j)^{1/2}\big)^b \lesssim_{\alpha,d,l} M^{b/2} < \infty$ and $\sum_{j=1}^\infty M_j^{1+2b} P(I_j)^b \lesssim_{\alpha,d,l} M^b$. Hence, we sum over $j$ while using the fact that $M'_j\lesssim_{\alpha,d} \sigma^{-a} M_j$ to obtain  

\be\label{eq:emp_error_discretized}
\EE\big[\|\GG_n\|_{\cF_{\sigma}(\delta)}\big] \lesssim_{\alpha,d}\mspace{-3mu} \sum_{j=1}^\infty \left[\big(M'_j P(I_j)^\frac{1}{2}\big)^b \delta^{1 - b}\mspace{-5mu}+\mspace{-3mu} \frac{(M'_j)^{1 + 2b}P(I_j)^b}{\delta^{2b}\sqrt{n}} \right] \mspace{-5mu}\lesssim_{\alpha,d,M,l} \sigma^{-ab} \delta^{1-b} + \frac{\sigma^{-a(1+2b)} }{\delta^{2b}\sqrt{n}}.\mspace{-3mu}
\ee
\medskip

\noindent\underline{Step 3} (Gaussian process discretization error). 
Recall that $G_P$ is a mean zero Gaussian process on $\cF_\sigma$ with covariance kernel $\Cov\big(G_P(f),G_P(g)\big)=\Cov_P(f,g)$. 
By Dudley's theorem (see, for e.g., Theorem 11.17, \citet{LeTa1991}), modulo arguments relating the entropy of the difference class $\cF_\sigma(\delta)$ with that of $\cF_\sigma$ (as in \eqref{eq:difference class entropy}), we have
\be\label{eq:gaussian process discretization error}\EE\left[\|G_{P}\|_{\cF_\sigma(\delta)}\right] \lesssim \int_{0}^\delta \sqrt{\log N\big(\epsilon, \cF_\sigma, L^2(P)\big)} \dd\epsilon \lesssim_{\alpha,d, M,l} \sigma^{-ab}\delta^{1-b}.\ee
\medskip

\noindent\underline{Step 4} (Comparison between $\GG_n$ and $G_P$ after discretization). 
Let $f_1, \dots, f_{N_{\sigma, \delta}}$ be a minimal $L^2(P)$ $\delta$-net of $\cF_\sigma$ and define $Z = \max_{1\leq j \leq N_{\sigma,\delta}} \GG_n(f_j)$ and $\tilde{Z} = \max_{1\leq j\leq N_{\sigma,\delta}} G_{P} (f_j)$. Invoking Lemma~\ref{lem: gauss_approx_discretized}, the rest of this step focuses on bounding $L_n$, $M_{n,X}(\eta)$, and $M_{n,Y}(\eta)$. Let $X \sim P$. For the first two, set $X_{ij} = f_j(X_i) - Pf_j$ and note that $\EE\big[|X_{ij}|^3\big] \leq \EE\big[(\|X\| + \sigma\sqrt{d}\,)^3\big] \leq \EE\big[(\|X\| + \sqrt{d}\,)^3\big]$.

This gives us
\be\label{eq: L_bound} L_n \leq \max_{1\leq j \leq N_{\sigma,\delta}}\EE\big[|X_{ij}|^3\big] \leq \EE\left[\big(\|X\| + \sqrt{d}\,\big)^3\right] \lesssim_d M^{3/l}, \ee
\be\label{eq: Mnx_bound} M_{n,X}(\eta) \leq \EE\left[\max_{1\leq j \leq N_{\sigma,\delta}}|X_{ij}|^3\right] \leq \EE\left[\big(\|X\| + \sqrt{d}\,\big)^3\right] \lesssim_d M^{3/l}. \ee

We next bound $M_{n,Y}(\eta)$. Here $Y_{j}$ are $\cN(0,\sigma_j^2)$, where $\sigma_j^2 \leq \E\big[ (\|X\|+ \sigma\sqrt{d})^2\big]$, for all $1 \leq j \leq p$. Then, the following concentration inequality and moment bound for Gaussian random variables hold \citep[Example 2.1.19]{GiNi2016}:
\begin{align*}
    \PP\left(\max_{1\leq j \leq N_{\sigma, \delta}} |Y_{j}| > \EE\left[\max_{1\leq j \leq N_{\sigma, \delta}} |Y_{j}|\right] + t\right) &\leq  \exp\left(-\frac{t^2}{2\E\left[ \big(\|X\|+ \sigma\sqrt{d}\big)^2\right]}\right),\\
    \EE\left[\max_{1\leq j \leq N_{\sigma, \delta}} |Y_{j}|\right] &\leq \sqrt{2 \E\left[ \big(\|X\|+ \sigma\sqrt{d}\big)^2\right] \log (2N_{\sigma,\delta})}.
\end{align*}
Setting $l(n, \eta) = \eta\sqrt{n}/\log N_{\sigma, \delta} - \sqrt{2 \E\big[ (\|X\|+ \sigma\sqrt{d})^2\big] \log 2N_{\sigma,\delta}}$, we have for $l(n,\eta) > 0$,
\be\begin{split}\label{eq: Mny_bound}
M_{n,Y}(\eta) &= \EE\left[\max_{1\leq j \leq p} |Y_{j}|^3 \ind_{\left\{\max_{1\leq j \leq p} |Y_{j}| > \eta\sqrt{n}/\log N_{\sigma,\delta}\right\}}\right]\\
&\lesssim \int_{l(n, \eta)}^\infty t^2\exp\left(-\frac{t^2}{2\E\left[ \big(\|X\|+ \sigma\sqrt{d}\big)^2\right]}\right)\,\dd t\\
&\leq \int_{0}^\infty t^2\exp\left(-\frac{t^2}{2\E\left[ \big(\|X\|+ \sigma\sqrt{d}\big)^2\right]}\right)\,\dd t\ \lesssim_{d,M} 1.
\end{split}\ee

Hence, from \eqref{eq: L_bound}-\eqref{eq: Mny_bound}, for every $A\in\cB(\RR)$ and every $\eta > 0$ such that $l(n, \eta) > 0$, we have
\be\label{eq: gaussian comparison}\PP(Z \in A) < \PP\big(\tilde{Z} \in A^\eta\big) + \frac{C_{\alpha,d,M,l} (\log N_{\sigma, \delta})^2}{\eta^3 \sqrt{n}}, \ee
for a constant $C_{\alpha,d,M,l}$ depending only on $\alpha$, $d$, $l$, and $M$.
\medskip

\noindent\underline{Step 5} (Strassen's theorem).
By \eqref{eq: gaussian comparison} and Markov's inequality, we have (as in \eqref{eq: berry esseen decomposition}), for each $\eta\in(0,1]$ and $n$ such that $l(n, \eta) > 0$,
\begin{align*}
    &\PP\left(\sqrt{n}\gwassemp \in A\right)\\
    &\quad\leq \PP\left(L^{\mspace{1mu}\sigma}_P \in A^{(2+C_7)\eta}\right) + \PP\Big(\|\GG_n\|_{\cF_{\sigma}(\delta)} > \eta\Big)+ \PP\Big(\|G_{P}\|_{\cF_{\sigma}(\delta)} > \eta\Big) + \frac{C_{\alpha,d,M,l} (\log N_{\sigma,\delta})^2}{\eta^3 \sqrt{n}}\numberthis\label{eq: gwassemp set prob bound}\\
    &\quad\leq \PP\Big(L^{\mspace{1mu}\sigma}_P \in A^{(2+C_7)\eta}\Big) +\frac{\EE\big[\|\GG_n\|_{\cF_{\sigma}(\delta)}\big]}{\eta}+ \frac{\EE\big[\|G_P\|_{\cF_{\sigma}(\delta)}\big]}{\eta}+\frac{C_{\alpha,d,M,l} (\log N_{\sigma,\delta})^2}{\eta^3 \sqrt{n}}.
\end{align*}

Hence, by Strassen's theorem (Lemma~\ref{lem: Strassen's theorem}), for $\eta > 0$ such that $l(n,\eta) > 0$, there exists a coupling $\big(V_{n,\sigma,\delta,\eta},W_{n,\sigma,\delta,\eta}\big)$ of $\sqrt{n}\gwassemp$ and $L^{\mspace{1mu}\sigma}_P$, such that
\be\label{eq:strassen_interim}
    \PP\big(|V_{n,\sigma,\delta,\eta}- W_{n,\sigma,\delta,\eta}| > (2+C_7)\eta\big) \leq \frac{\EE\big[\|\GG_n\|_{\cF_{\sigma}(\delta)}\big]}{\eta} + \frac{C_{\alpha,d,M,l} (\log N_{\sigma,\delta})^2}{\eta^3 \sqrt{n}} + \frac{\EE\big[\|G_{P}\|_{\cF_{\sigma}(\delta)}\big]}{\eta}.
\ee

Substituting the bounds \eqref{eq: L2P metric entropy bound}, \eqref{eq:emp_error_discretized}, and \eqref{eq:gaussian process discretization error} in \eqref{eq:strassen_interim}, we obtain the following estimate
\be\label{eq: strassen bound}
\PP\big(|V_{n,\sigma,\delta,\eta} - W_{n,\sigma,\delta,\eta}| > (2+C_7)\eta\big)\lesssim_{\alpha,d,M,l} \frac{1}{\eta}\left[\sigma^{-ab} \delta^{1-b} + \frac{1}{\delta^{2b}\sqrt{n}}\sigma^{-a(1+2b)} \right] + \frac{1}{\eta^3}\frac{\sigma^{-4ab}}{\delta^{4b}\sqrt{n}}.\ee
\medskip

\noindent\underline{Step 6} (Rates for $\delta$ and $\eta$).
Note that \eqref{eq: strassen bound} holds for any $\sigma,\delta,\eta \in (0,1]$, and $n$ under the moment condition of the theorem and $l(n,\eta) > 0$. Let $\sigma = \sigma_n$ as in the statement of the theorem. 

Taking $\delta = n^{-1/(2(3+b))}\sigma^{-ab/(3+b)}$ and $\eta = \eta_n = K_n n^{-(1-b)/(2(3+b))} \sigma_n^{-4ab/(3+b)}$ for $K_n \geq 1$, we see that the RHS is $O(1/K_n)$ with $l(n,\eta_n) > 0$, for all sufficiently large $n$. For all such $n$, the random variables $V_n := V_{n,\sigma_n,\delta_n,\eta_n}$ and $W_n := W_{n,\sigma_n,\delta_n,\eta_n}$ satisfy
\[\PP\left(|V_n - W_n| \geq K_n n^{-\frac{1-b}{2(3+b)}} \sigma_n^{-\frac{4ab}{3+b}}\right) = \frac{1}{K_n}O(1).\]
For $K_n \to \infty$ we have the claim (i).
\medskip

\textbf{Proof of (ii)}. To control $\rho\big(\sqrt{n}\gwassempsmall,\,L_P^{\mspace{1mu}\sigma_n}\big)$, we start from \eqref{eq: gwassemp set prob bound} and bound the latter probability terms on RHS using concentration inequalities. 
\medskip

\noindent\underline{Step 1} (Empirical process concentration via Fuk-Nagev). To treat the empirical process term, we apply Lemma~\ref{lem: fuk-nagev} to $\cF_{\sigma_n}(\delta_n)$. Matching the lemma notation to our framework, we set $p = l > 4\alpha -d + 3$, $\cF = \{f-Pf\,:\,f \in \cF_{\sigma_n}(\delta_n)\}$, $F(x) = \|x\| + {\sigma_n}\sqrt{d}$, $\alpha = 1$, $Z = \sqrt{n}\|\GG_n\|_{\cF}$, $\sigma^2 = \delta_n^2$, $\EE[M^p] \lesssim_{d} \EE\big[\max_{1\leq i \leq n} \|X_i\|^p\big] \leq n\EE\big[\|X_i\|^p\big]$, and $x = \sqrt{n}\big(\eta_n-2\EE\big[\|\GG_n\|_{\cF_{\sigma_n}(\delta_n)}\big]\big)$. For $\eta_n > 2\EE\big[\|\GG_n\|_{\cF_{\sigma_n}(\delta_n)}\big]$, we then have, for all $n$ such that $\eta_n-2\EE\big[\|\GG_n\|_{\cF_{\sigma_n}(\delta_n)}\big] > 0$,
\[\begin{split}
\PP\Big(\|\GG_n\|_{\cF_{\sigma_n}(\delta_n)} > \eta_n\Big) &\lesssim_{\alpha,d,M,l} \underbrace{e^{-\frac{c_{\alpha,d}}{\delta_n^2}\left(\eta_n-2\EE\big[\|\GG_n\|_{\cF_{\sigma_n}(\delta_n)}\big]\right)^2}}_{\mathrm{(a)}} \\
&\qquad+ \underbrace{n^{-\frac{l-2}{2}}\Big(\eta_n-2\EE\big[\|\GG_n\|_{\cF_{\sigma_n}(\delta_n)}\big]\Big)^{-(4\alpha -d + 3)}}_{\mathrm{(b)}}.
\end{split}
\]

\medskip

Hence, if we choose $\delta_n$ and $\eta_n$ such that $\eta_n \geq 3\EE\big[\|\GG_n\|_{\sigma_n,\delta_n}\big]$, and $\eta_n/\delta_n \gtrsim n^{p'}$ for some $p' > 0$, where $p'$ depends only on $\alpha$ and $d$, then $\text{(a)} \lesssim_{\alpha,d,M,l,p} n^{-p}$ for any $p > 0$. We then also have $\text{(b)} \lesssim_{\alpha,d,M,l} n^{-(l-2)/2} \eta_n^{-l}$.

\noindent\underline{Step 2} (Gaussian concentration). 
Under the above rate conditions, the Borel-Sudakov-Tsirelson inequality \citep[Theorem 2.2.7]{GiNi2016} implies that, for any $p > 0$,
\begin{align*}
    \PP\Big(\|G_{P}\|_{\cF_{\sigma_n}(\delta_n)} > \eta_n\Big) &= \PP\Big(\|G_{P}\|_{\cF_{\sigma_n}(\delta_n)} > \EE[\|G_{P}\|_{\cF_{\sigma_n}(\delta_n)}] + \big(\eta_n-\EE[\|G_{P}\|_{\cF_{\sigma_n}(\delta_n)}]\big)\Big)\\
    &\leq e^{-\frac{1}{2\delta_n^2}\big(\eta_n-\EE\big[\|G_{P}\|_{\cF_{\sigma_n}(\delta_n)}\big]\big)^2}\\
    & \lesssim_{\alpha,d,M,l,p} n^{-p},\numberthis\label{eq:gaussian_process_dicretization_error_conc_bound}
\end{align*}
with the same parameters as before. 
\medskip

\noindent\underline{Step 3} (Combined bounds and optimal rates). Assume we have chosen $\delta_n$ and $\eta_n$ so that the rate conditions from Step 1 hold. Using the last bounds in Steps 1 and 2 we obtain, respectively, $\PP(\|\GG_n\|_{\cF_{\sigma_n}(\delta_n)} > \eta_n) \lesssim_{\alpha,d,M,l,p} n^{-(l-2)/2}\eta_n^{-l} + n^{-p}$ and $\PP(\|G_{P}\|_{\cF_{\sigma_n}(\delta_n)} > \eta_n) \lesssim_{\alpha, d, M, l, p} n^{-p}$, for any $p > 0$. Note that \eqref{eq: L2P metric entropy bound} further gives $\frac{C_{d,M,l} (\log N_{\sigma_n,\delta_n})^2}{\eta_n^3 \sqrt{n}} \lesssim_{\alpha,d,M,l} \frac{1}{\eta_n^3}\frac{\sigma_n^{-4ab} \delta_n^{-4b}}{\sqrt{n}}$. Inserting these bounds into  \eqref{eq: gwassemp set prob bound}, the Prokhorov distance is bounded as
\[
\rho\left(\sqrt{n}\gwassempsmall,L_P^{\mspace{1mu}\sigma}\right)\lesssim_{\alpha,d,M,l} \eta_n \wedge \frac{1}{\eta_n^3}\frac{\sigma_n^{-4ab} \delta_n^{-4b}}{\sqrt{n}} \wedge n^{-\frac{l-2}{2}}\eta_n^{-l}.
\]
To conclude, we set $\delta_n = n^{-1/8}$. Then, from \eqref{eq:gaussian process discretization error}, we can choose a constant $C_{\alpha,d,M,l}$ such that $\eta_n = C_{\alpha,d,M,l} n^{-(1-b)/8}\sigma_n^{-ab} \geq 3 \EE\big[\|G_P\|_{\cF_{\sigma_n}(\delta_n)}\big]$, which fulfills the rate conditions from Step 1 with $p' = b$, and yields
\[
\rho\left(\sqrt{n}\gwassempsmall,\,L_P^{\mspace{1mu}\sigma_n}\right) \lesssim_{\alpha,d,M,l} n^{-(1-b)/8} \sigma_n^{-ab},
\]
as claimed.
\end{proof}

\subsection{Proof (Outline) of Theorem \ref{thm: Empirical Bootstrap Berry-Esseen type rate}}
\label{SUBSEC: SWD Empirical Bootstrap proof outline}
The subsection presents an outline of the proof, with the full details available in Appendix \ref{SUBSEC: SWD Empirical Bootstrap Berry-Esseen proof}. The comparison between the empirical bootstrap process and the limit Gaussian process is split into two parts. First, we compare the limit Gaussian process with its bootstrap analogue, the multiplier bootstrap process. Then, we related the multiplier bootstrap process to the empirical bootstrap process.

The multiplier bootstrap process indexed by the smooth 1-Lipschitz class  $\cF_{\sigma}$ is $\GG_n^B(f) := n^{-1/2} \sum_{i=1}^n \xi_i \big(f\left(X_i\right) - P_n f\big),\ {f \in \cF_{\sigma}}$, where $\xi_i\iid\cN_1$ are independent of $(X_i)_{i\in \NN}$. We also introduce the shorthand $X_{1:n}:=(X_1, \dots, X_n)$. As in \eqref{eq: berry esseen decomposition}, we start by discretizing the three processes and then compare the distributions of the discretized maxima of $\GG_n^B$ and $G_P$. We~have
\begin{align*}\label{eq: empirical bootstrap berry esseen decomposition}
    \PP&\big(\|G_n^*\|_{\cF_\sigma} \in A\,\big|\,X_{1:n}\big) - \PP\left(\|\GG_P\|_{\cF_\sigma} \in A^{(3+C_7)\eta}\right)\\
    &\leq \underbrace{\PP\Big(\big|\|G_P\|_{\cF_\sigma} - \max_{1\leq j \leq N_{\sigma, \delta}} G_P(f_j)\big| > \eta \Big)}_\text{(I)} + \underbrace{\PP\left(\big|\|\GG_n^*\|_{\cF_\sigma} - \max_{1\leq j \leq N_{\sigma, \delta}} \GG_n^*(f_j)\big| > \eta \,\Big|\,X_{1:n}\right)}_{\text{(II)}}\\
    &\qquad\quad+\underbrace{\PP\left(\max_{1\leq j \leq N_{\sigma, \delta}} \GG_n^*(f_j) \in A^{\eta}\,\middle|\,X_{1:n}\right) - \PP\left(\max_{1\leq j \leq N_{\sigma, \delta}} \GG_n^B(f_j) \in A^{(1+C_7)\eta}\,\middle|\,X_{1:n}\right)}_{\text{(III)}}\\
    &\qquad\quad+\underbrace{\PP\left(\max_{1\leq j \leq N_{\sigma, \delta}}\GG_n^B(f_j) \in A^{(1+C_7)\eta}\middle|\,X_{1:n}\right) - \PP\left(\max_{1\leq j \leq N_{\sigma, \delta}} G_P(f_j) \in A^{(2+C_7)\eta}\right)}_{\text{(IV)}}.\numberthis
\end{align*}

Term (I) can be bounded using \eqref{eq:gaussian_process_dicretization_error_conc_bound}, while (II) is controlled using \citet[Lemma 3.6.6]{VaWe1996}, with the expectation bound supplied by the local maximal inequality for a poissonized empirical process.
Term (III) is handled by Lemma~\ref{lem: gauss_approx_discretized}, while a bound on (IV) can be obtained by controlling the difference between the sample and population covariance matrices. For the latter, we use the following result.
\begin{lemma}[Theorem 3.2, \citet{ChChKa2016}]
\label{lem: gaussian comparison}
Let $X = (X_1,\dots,X_p)^\intercal\sim \cN(\mu, \Sigma^X)$ and $Y = (Y_1,\dots, Y_p)^\intercal\sim\cN(\mu, \Sigma^Y)$ be Gaussian random vectors in $\RR^p$. 
Define $\Delta= \max_{1\leq j,k \leq p} \big|\Sigma_{j,k}^X - \Sigma_{j,k}^Y\big|$, $Z\mspace{-2mu}=\mspace{-2mu}\max_{1\leq j\leq p}\mspace{-2mu}X_j$, and $\tilde{Z}= \max_{1\leq j\leq p} Y_j$. Then, for every $\eta > 0$ and $A\in\cB(\RR)$, we have
\[\PP(Z \in A) \leq \PP\big(\tilde{Z} \in A^\eta\big) + C \eta^{-1}\sqrt{\Delta \log p},\]
where $C> 0$ is an universal constant.
\end{lemma}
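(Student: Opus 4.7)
The plan is to establish $\PP(Z \in A) - \PP(\tilde Z \in A^\eta) \lesssim \eta^{-1}\sqrt{\Delta\log p}$ by a three-step scheme: approximate $\ind_A$ by a smooth surrogate, replace $\max_j$ by the log-sum-exp $F_\beta$, and compare the resulting smooth functional under $X$ and $Y$ by Slepian--Stein interpolation. For the first step, I would pick a $C^\infty$ bump $\phi \ge 0$ with $\int\phi = 1$ and support in $[-1,1]$, set $\phi_\delta(s):=\delta^{-1}\phi(s/\delta)$, and define $g := \ind_{A^{\eta/2}} \ast \phi_{\eta/4}$. This produces $g:\R \to [0,1]$ with $g \equiv 1$ on $A$, $g \equiv 0$ off $A^\eta$, and $\|g^{(k)}\|_\infty \le c_k \eta^{-k}$ for $k=1,2$. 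Monotonicity gives $\PP(Z \in A) \le \E[g(Z)]$ and $\E[g(\tilde Z)] \le \PP(\tilde Z \in A^\eta)$, so it suffices to bound $\E[g(\max_j X_j)] - \E[g(\max_j Y_j)]$.

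Next, I would use $F_\beta(x) := \beta^{-1}\log\bigl(\sum_j e^{\beta x_j}\bigr)$ with $\beta > 0$, whose key properties are $0 \le F_\beta(x) - \max_j x_j \le \beta^{-1}\log p$, gradient components $\pi_j(x) := \partial_j F_\beta(x) = e^{\beta x_j}/\sum_l e^{\beta x_l}$ forming a probability simplex, Hessian $\partial^2_{jk}F_\beta = \beta(\pi_j \delta_{jk} - \pi_j\pi_k)$, and $\sum_{j,k}|\partial^2_{jk}F_\beta| \le 2\beta$. Since $g$ is Lipschitz with constant $c_1/\eta$, passing from $\max_j$ to $F_\beta$ costs at most $c_1\beta^{-1}\eta^{-1}\log p$ in each expectation. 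With $\Psi := g\circ F_\beta$ and $X,Y$ taken independent, the interpolation path $Z(t) := \sqrt{1-t}(X-\mu) + \sqrt{t}(Y-\mu) + \mu \sim \cN(\mu,(1-t)\Sigma^X + t\Sigma^Y)$ satisfies, by Gaussian integration by parts,
\[
\frac{d}{dt}\E[\Psi(Z(t))] = \tfrac12 \sum_{j,k}(\Sigma^Y_{jk} - \Sigma^X_{jk})\,\E[\partial^2_{jk}\Psi(Z(t))].
\]
The chain rule gives $\partial^2_{jk}\Psi = g''(F_\beta)\pi_j\pi_k + g'(F_\beta)\partial^2_{jk}F_\beta$, so $\sum_{jk}|\partial^2_{jk}\Psi| \le \|g''\|_\infty + 2\beta\|g'\|_\infty \lesssim \eta^{-2} + \beta\eta^{-1}$; integrating in $t$ then bounds $|\E[\Psi(X)] - \E[\Psi(Y)]|$ by $\Delta(\eta^{-2} + \beta\eta^{-1})$.

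Combining the three sources of error,
\[
\PP(Z \in A) - \PP(\tilde Z \in A^\eta) \lesssim \frac{\log p}{\eta \beta} + \frac{\Delta}{\eta^2} + \frac{\Delta\beta}{\eta},
\]
and optimizing in $\beta = \sqrt{(\log p)/\Delta}$ matches the first and third terms at $\eta^{-1}\sqrt{\Delta\log p}$. In the regime $\Delta \le \eta^2\log p$ the middle $\Delta/\eta^2$ term is absorbed; in the complementary regime one has $\eta^{-1}\sqrt{\Delta\log p} \ge \log p$, so for a sufficiently large absolute constant $C$ the claim is trivial since probabilities are bounded by $1$. The main obstacle is the careful bookkeeping of second-order derivatives of $g \circ F_\beta$---ensuring that the Hessian estimate $\sum_{jk}|\partial^2_{jk}F_\beta| \le 2\beta$ pairs correctly with the $\eta^{-1}$ Lipschitz constant and the $\eta^{-2}$ bound on $\|g''\|_\infty$ in the optimization---together with verifying that the Stein estimates are uniform in $\mu$, which is secured by centering inside the interpolation so that $\mu$ drops out of every derivative estimate.
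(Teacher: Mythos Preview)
The paper does not give its own proof of this lemma; it is quoted verbatim as Theorem~3.2 of \cite{ChChKa2016} and used as a black box in the proof of Theorem~\ref{thm: Empirical Bootstrap Berry-Esseen type rate}. Your argument is precisely the standard Chernozhukov--Chetverikov--Kato proof: smooth the indicator of $A$ at scale $\eta$, replace the maximum by the soft-max $F_\beta$, apply Gaussian (Slepian--Stein) interpolation to bound $\E[\Psi(X)]-\E[\Psi(Y)]$ via second derivatives of $\Psi=g\circ F_\beta$, and optimize over $\beta$. The derivative bookkeeping ($\sum_{j,k}|\partial_{jk}^2 F_\beta|\le 2\beta$, $\sum_{j,k}\pi_j\pi_k=1$, hence $\sum_{j,k}|\partial_{jk}^2\Psi|\lesssim \eta^{-2}+\beta\eta^{-1}$) and the case split on $\Delta\lessgtr \eta^2\log p$ are all correct, the latter implicitly using $p\ge 2$ so that $\log p$ is bounded away from zero and the bound becomes trivially $\ge 1$ in the bad regime. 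There is nothing to compare against in the paper itself; your proposal reproduces the original source's argument.
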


It thus remains to convert \eqref{eq: empirical bootstrap berry esseen decomposition} to an absolute difference bound in probability. For this step, we use a conditional version of Strassen's theorem to ensure the existence of couplings $\big(V_{\sigma,n},W_{\sigma,n}\big)$, with $\law\left(V_{\sigma,n}\,\big|\,X_{1:n}\right) = \law\left(\|\GG_n^*\|_{\cF_\sigma}\middle|\,X_{1:n}\right)$ and $\law(W_{\sigma,n}) = \law\left(\|G_P\|_{\cF_\sigma}\right)$,
such that the difference $|V_{\sigma,n} - W_{\sigma,n}|$ is bounded in probability.

\begin{lemma}[Lemma 4.2, \citet{ChChKa2016}]
\label{lem: Conditional Strassen's theorem}
Let $V$ be a real-valued random variable defined on a probability space $(\Omega, \mathcal{A}, \PP)$, and let $\mathcal{C}$ be a countably generated sub $\sigma$-field of $\mathcal{A}$. Assume that there exists a uniform random variable on $[0, 1]$ independent of $\mathcal{C}\vee\sigma(V)$. Let $G(\cdot| \mathcal{C})$ be a regular conditional distribution on the Borel $\sigma$-field of $\RR$ given $\mathcal{C}$, and suppose that for some $\delta > 0$ and $\epsilon > 0$,
\[\EE\left[\sup_{A \in \cB[\RR]}\left(\PP\big(V \in A\,\big|\,\cC\big) - G\big(A^\delta\,\big|\,\mathcal{C}\big)\right)\right] \leq \epsilon.\]
Then there exists a random variable $W$ such that the conditional distribution of $W$ given $\mathcal{C}$ coincides with $G(\cdot|\mathcal{C})$ and
\[\PP\big(|V - W| > \delta\big) < \epsilon.\]
\end{lemma}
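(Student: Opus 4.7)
My plan is to lift the unconditional Strassen theorem (Lemma~\ref{lem: Strassen's theorem}) to the conditional setting via a measurable selection argument. First, I would rephrase the hypothesis pointwise. Let $\alpha(\omega) := \sup_{A \in \cB(\RR)}\big\{\PP(V \in A \mid \cC)(\omega) - G(A^\delta \mid \cC)(\omega)\big\}$ and let $K_V(\omega, \cdot)$ be a regular conditional distribution of $V$ given $\cC$, which exists since $\RR$ is Polish. The assumption is exactly $\EE[\alpha(\cC)] \le \epsilon$. For each fixed $\omega$, the unconditional Strassen theorem applied to the pair of laws $K_V(\omega,\cdot)$ and $G(\cdot \mid \cC)(\omega)$ produces a coupling $\pi_\omega$ on $\RR^2$ with the correct marginals and with $\pi_\omega\{(x,y) : |x-y| > \delta\} \le \alpha(\omega)$.

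Next, I would promote $\omega \mapsto \pi_\omega$ to a $\cC$-measurable stochastic kernel. Since $\cC$ is countably generated and both $K_V$ and $G$ are $\cC$-measurable kernels into the Polish space $\RR$, one can select $\pi_\omega$ in a $\cC$-measurable fashion using Kuratowski-Ryll-Nardzewski (the set of admissible couplings is weakly closed and measurably parametrized in $\omega$). Concretely, an explicit $\pi_\omega$ can be constructed from the joint distribution function together with its conditional quantile transforms, which guarantees measurability. I would then use the auxiliary $\mathsf{U}[0,1]$ variable $U$, independent of $\cC \vee \sigma(V)$, to realize the second coordinate: conditionally on $\cC$ and $V$, define $W$ as the generalized inverse of the conditional CDF of $\pi_\omega(\cdot \mid V=v)$ applied to $U$. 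By construction, conditional on $\cC$, the pair $(V, W)$ has joint law $\pi_\omega$, so $\law(W \mid \cC) = G(\cdot \mid \cC)$ as required.

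Finally, integrating the pointwise bound yields
\[
\PP(|V - W| > \delta) \;=\; \EE\big[\pi_\cC\{(x,y) : |x-y| > \delta\}\big] \;\le\; \EE[\alpha(\cC)] \;\le\; \epsilon,
\]
which is the desired estimate.

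The main obstacle is the measurable selection: Strassen's theorem is trivial pointwise, but one must produce a single random variable $W$ on $(\Omega, \cA, \PP)$ whose conditional law given $\cC$ is $G(\cdot \mid \cC)$ almost surely. The countable generation of $\cC$ (which guarantees existence and $\cC$-measurability of the regular conditional distributions) and the auxiliary independent uniform (which supplies the randomness needed to realize the coupling without disturbing the marginal of $V$) are exactly the hypotheses that make this selection/realization possible.
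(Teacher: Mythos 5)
Your outline is correct and reproduces, in substance, the two-step argument used to prove Lemma 4.2 in the cited reference \cite{ChChKa2016}: a pointwise application of the unconditional Strassen theorem followed by a Kuratowski--Ryll-Nardzewski measurable selection of the coupling kernel (the set of admissible couplings being weakly closed, nonempty, and measurably parametrized by $\omega$ because $\cC$ is countably generated), and then realization of the second marginal by plugging the auxiliary independent $\mathsf{U}[0,1]$ variable into the conditional quantile function of $\pi_\omega(\cdot \mid V=v)$, with the final bound obtained by integrating the pointwise Strassen estimate over $\cC$. Since the present paper cites the lemma without reproving it, there is no internal proof to compare against; one small caveat is that your parenthetical suggestion of constructing $\pi_\omega$ explicitly from ``quantile transforms'' is not generally a valid substitute for the selection-theorem route (the comonotone coupling does not in general achieve the Strassen bound), but as you also invoke Kuratowski--Ryll-Nardzewski the argument stands.
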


An exact characterization of the dependence on $\sigma$ will allow us to fix $\sigma = \sigma_n$, possibly vanishing as $n$ tends to infinity, in the bounds. The bound on Prokhorov distance will follow from \eqref{eq: empirical bootstrap berry esseen decomposition} by bounding the expectation of terms on the RHS.

\subsection{Proof of Theorem \ref{thm: low dim rate for exp}}
\label{SUBSEC: low dim rate proof}
We prove the following stronger result, from which Theorem \ref{thm: low dim rate for exp} follows. 
\begin{theorem}[Strengthening of Theorem \ref{thm: low dim rate for exp} via entropy condition]
\label{thm: low dim rate for exp 2}
Suppose $P\in\cP(\RR^d)$ satisfies
\begin{equation}
\label{eq: low dim entropy condition}
N_\epsilon(P, \epsilon^\frac{2\alpha s}{s - 2}) \leq K \epsilon^{-s}\big(\log(1/\epsilon)\big)^\beta,\quad \forall\,\epsilon > 0
\end{equation} 
and $P \|x\|^s = M < \infty$, for some constant $K > 0$, $\beta \geq 0$ and $\alpha>s/2 >1$. 
Then, we have
\[\EE\left[\sqrt{n}\gwass (P_n,P) \right] \lesssim_{\alpha,d,s,M,K,\beta} \sigma^{-\alpha+1}.\]
If instead $\alpha = s/2$, then
\be \label{eq: SWD rate low dim}\EE\left[\sqrt{n}\gwass (P_n,P)\right] \lesssim_{d,s,M,K,\beta} \sigma^{-s/2+1}(\log n)^\frac{\beta+3}{2}.\ee
In particular, if $\dim^*_{\mathsf{SW}_1}(P) < s$ and $P \|x\|^s = M < \infty$, then \eqref{eq: SWD rate low dim} holds with $\beta = 0$ and $\alpha = s/2$.
\end{theorem}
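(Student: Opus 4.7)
The plan is to adapt the proof of Theorem~\ref{thm: CLT for W1} and its refinement in Corollary~\ref{cor: sharp gwass rate}, replacing the unit-cube partition of $\R^d$ by a multi-scale cover tailored to the entropy condition \eqref{eq: low dim entropy condition}. As before, Kantorovich-Rubinstein duality gives $\sqrt{n}\gwass(P_n,P)=\|\GG_n\|_{\cF_\sigma}$, where $\cF_\sigma=\{f\ast\varphi_\sigma:f\in\mathsf{Lip}_{1,0}\}$, and Lemma~\ref{lem: fractional derivatives} shows every $f_\sigma\in\cF_\sigma$ has $\alpha$-H\"older norm $\lesssim_{\alpha,d}\sigma^{-\alpha+1}(1\vee\sup_{\cX}\|x\|)$ on any bounded convex set $\cX$. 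I will bound $\E[\|\GG_n\|_{\cF_\sigma}]$ via a maximal inequality whose driving quantity is an $L^2(P)$ bracketing entropy integral; the improvement over Theorem~\ref{thm: CLT for W1} comes from showing that this entropy scales with the intrinsic dimension $s$ in place of the ambient $d$.

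Concretely, for each integer $k\ge 1$ set $\epsilon_k=2^{-k}$ and $\tau_k=\epsilon_k^{\alpha s/(s-2)}$, and use \eqref{eq: low dim entropy condition} to produce Borel sets $S_k$ with $P(S_k)\ge 1-\tau_k$ admitting $\epsilon_k$-nets of cardinality $N_k\lesssim_K \epsilon_k^{-s}(\log(1/\epsilon_k))^\beta$. Restricting to the layer $A_k=S_k\setminus S_{k-1}$ and combining the $\epsilon_k$-net with the $\alpha$-H\"older bound from Lemma~\ref{lem: fractional derivatives} yields brackets for $\cF_\sigma|_{A_k}$ of $L^2(P)$-diameter roughly $\sigma^{-\alpha+1}\epsilon_k^{\alpha}$, giving
\[
\log N_{[\,]}\big(\delta,\cF_\sigma|_{A_k},L^2(P)\big)\ \lesssim_{\alpha,d,s,K,\beta}\ \bigl(\sigma^{-\alpha+1}R_k/\delta\bigr)^{s/\alpha}(\log(1/\delta))^\beta,
\]
where $R_k=1\vee\sup_{S_k}\|x\|$. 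The exponent $s/\alpha$ (rather than $d/\alpha$ as in Lemma~\ref{lem: metric entropy}) encodes the intrinsic low-dimensionality. Contributions from $S_k^c$ are absorbed using the envelope $F(x)=\|x\|+\sigma\sqrt d$ together with H\"older's inequality and the moment hypothesis $P\|x\|^s=M$; in particular $\int_{S_k^c} F^2\,\mathrm dP\lesssim M^{2/s}\tau_k^{1-2/s}=M^{2/s}\epsilon_k^{\alpha}$, which matches the tolerance at scale $\epsilon_k$ and allows gluing the layer-wise brackets into a global bracketing family.

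Applying the bracketing maximal inequality (Theorem~2.14.2 of \cite{VaWe1996}) layer by layer, the driving uniform entropy integral is $\int_0^{c_P\sigma^{-\alpha+1}}\delta^{-s/(2\alpha)}(\log(1/\delta))^{\beta/2}\mathrm d\delta$, which converges as soon as $\alpha>s/2$. Summing over $k$ (the series in $k$ converges by the moment condition, cf.\ Lemma~\ref{lem: moment inequality} and the choice of $R_k$), the regime $\alpha>s$ yields a bound of order $\sigma^{-\alpha+1}$, proving the first claim. For the critical case $\alpha=s$, I truncate the chaining at the noise scale $\delta_n\sim n^{-1/2}$ and control the residual via a Bernstein/Fuk–Nagaev estimate (as in Corollary~\ref{cor: sharp gwass rate} and Lemma~\ref{lem: fuk-nagev}); this produces the extra $(\log n)^{(\beta+3)/2}$ factor and the sharper exponent $\sigma^{-s/2+1}$, exactly as Corollary~\ref{cor: sharp gwass rate} improves Theorem~\ref{thm: CLT for W1} from $\sigma^{-\alpha+1}$ to $\sigma^{-d/2+1}\log n$. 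The original Theorem~\ref{thm: low dim rate for exp} then follows by plugging in the constants furnished by Definition~\ref{DEF:gwass_dim}.

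The main obstacle is Step~2: extracting a bracketing entropy bound whose $\delta$-exponent is genuinely $s/\alpha$ starting from a single-scale $(\epsilon,\tau)$-cover rather than from regularity of a manifold-like support. The layered brackets on $A_k$ must be constructed carefully enough that the tail contributions from each $S_k^c$ are absorbed into the polynomial-moment budget without inflating the entropy exponent back to $d/\alpha$, and that the resulting sums over $k$ remain finite uniformly in the smoothing parameter. Once this entropy estimate is in place, the chaining, noise-level truncation, and optimization over the free scale follow the familiar template used throughout Sections~\ref{sec: limit} and~\ref{sec: Berry-Esseen type results}.
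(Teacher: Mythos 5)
Your proposal takes a genuinely different structural route from the paper's, and there is a gap at the key step. The paper's proof does \emph{not} fix a multi-scale decomposition once and for all; instead, for each target bracket width $\epsilon\|F\|_{L^2(P)}$ it constructs a \emph{fresh} set $S = S_\epsilon$ with $P(S)>1-\epsilon^{s/(s-2)}$ and a fresh $\epsilon^{1/\alpha}$-net of it, then forms global $L^2(P)$-brackets for $\cF_\sigma$ of that single width by combining local H\"older nets on the disjointified balls with a single envelope bracket $\pm F\mathds{1}_{(\cup_j B_j)^c}$ for the excluded mass, controlled via Lemma~\ref{lem: small set exp}. The resulting entropy function $\delta\mapsto\log N_{[\,]}(\delta,\cF_\sigma,L^2(P))\lesssim K\delta^{-s/\alpha}\log(1/\delta)(\log(1/\delta))^\beta$ (after normalizing $\delta=\epsilon\|F\|_{L^2(P)}$) is obtained by letting $\epsilon$ range, with a \emph{different} $S_\epsilon$ behind each value; one then invokes Theorem~2.14.1 of \cite{VaWe1996} once, or Lemma~\ref{lem: non integrable l2 entropy} once for the borderline $\alpha=s/2$ case.

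Your layered construction $A_k=S_k\setminus S_{k-1}$ instead fixes the sets in advance and asserts a full-range bracketing-entropy bound
\[
\log N_{[\,]}\big(\delta,\cF_\sigma|_{A_k},L^2(P)\big)\lesssim\big(\sigma^{-\alpha+1}R_k/\delta\big)^{s/\alpha}\big(\log(1/\delta)\big)^\beta
\]
valid for all $\delta$. This does not follow from condition~\eqref{eq: low dim entropy condition}: that condition only produces, for each $\epsilon$, a single set $S_\epsilon$ (not nested in any canonical way with $S_{\epsilon'}$) together with an $\epsilon$-cover of it of size $\lesssim\epsilon^{-s}(\log(1/\epsilon))^\beta$. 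It says nothing about covering a fixed shell $A_k$ at scales finer than $\epsilon_k$, and the uncovered mass $\tau_{k'}$ attached to a finer-scale set $S_{k'}$ can still land inside $A_k$. Your sketch acknowledges only a single bracket width per layer (``$L^2(P)$-diameter roughly $\sigma^{-\alpha+1}\epsilon_k^\alpha$''), but the subsequent ``giving'' that jumps to an entropy bound for \emph{all} $\delta$ is not justified, and the bracketing maximal inequality you then apply to each layer (Theorem~2.14.2) requires exactly this multi-scale entropy. The summability-in-$k$ appeal to Lemma~\ref{lem: moment inequality} is also misplaced, since that lemma concerns the unit-cube partition, not dyadic mass shells. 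Finally, the critical case $\alpha=s/2$ is handled in the paper by Lemma~\ref{lem: non integrable l2 entropy} (a truncated bracketing chaining), not by Fuk--Nagaev; the latter appears in the Berry--Esseen arguments of Section~\ref{sec: Berry-Esseen type results}, not here. The fix is to abandon the fixed-layer decomposition and, as the paper does, let the support approximation $S$ change with $\epsilon$, so that every bracket width comes with its own tailored cover.
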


We again begin with a few technical lemmas that will be used in the proof. The first provides a bound on tail expectations of random variables in the quantile form. This result is later used to bound the $L^2(P)$ entropy of $\cF_\sigma$ by accounting for the probability mass excluded in \eqref{eq: low dim entropy condition}.


The next lemma gives a bound on the metric entropy of smooth function classes defined on sufficiently small balls. This will be used in conjunction with the entropy condition \eqref{eq: low dim entropy condition} to bound the entropy of the entire, unbounded, function class, as in \citet[Theorem 2.7.4]{vanderVaart1996}.
\begin{lemma}[Metric entropy of small balls]
\label{lem: metric entropy small ball}
For all $\epsilon \in (0,1)$, \(\log N\big(\epsilon, C_1^\alpha\big(B(\epsilon^{1/\alpha})\big), \|\cdot\|_\infty\big) \lesssim_{\alpha,d} \log\big(1 + \frac{2C_d}{\epsilon}\big),\)
where $C_d = (1 + e^d)$.
\end{lemma}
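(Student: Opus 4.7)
The plan is to exploit the smallness of the domain $B(\epsilon^{1/\alpha})$ via a Taylor expansion at the origin. For any $f \in C_1^\alpha\big(B(\epsilon^{1/\alpha})\big)$, write $f(x) = T_f(x) + R_f(x)$, where $T_f$ is the Taylor polynomial of degree $\underline\alpha$ at the origin. Since $\|f\|_\alpha \le 1$ controls both the derivatives $D^k f(0)$ for $|k| \le \underline\alpha$ (in absolute value) and the H\"older seminorm of order $\alpha - \underline\alpha$ of the highest-order derivatives, the standard Taylor remainder estimate yields $|R_f(x)| \lesssim_\alpha \|x\|^\alpha \le \epsilon$ uniformly on $B(\epsilon^{1/\alpha})$. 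Thus each $f$ is within $O_\alpha(\epsilon)$ of a polynomial in the finite-dimensional space $\mathcal{P}_{\underline\alpha}$ of polynomials of degree at most $\underline\alpha$.

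Next, I would construct an explicit $\epsilon$-net for the set of admissible Taylor polynomials in $\|\cdot\|_\infty$ on $B(\epsilon^{1/\alpha})$. Writing $T_f(x) = \sum_{|k|\le\underline\alpha} a_k x^k$ with $a_k = D^k f(0)/k!$, we have $|a_k| \le 1/k!$. For $\|x\|_\infty \le \epsilon^{1/\alpha} \le 1$, replacing each $a_k$ by a discretized value $\tilde a_k$ with $|a_k - \tilde a_k| \le \eta$ yields
\[
|T_f(x) - \tilde T(x)| \;\le\; \eta \sum_{|k| \le \underline\alpha} \|x\|_\infty^{|k|} \;\le\; \eta \sum_{|k| \le \underline\alpha} 1 \;\le\; \eta\, N_{\alpha,d},
\]
where $N_{\alpha,d} := \binom{d+\underline\alpha}{d}$ is the number of multi-indices of order at most $\underline\alpha$. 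Choosing $\eta = \epsilon/N_{\alpha,d}$ gives an $\epsilon$-approximation of the Taylor part, which combined with the remainder bound gives an overall $O_\alpha(\epsilon)$-approximation of $f$ (rescaling $\epsilon$ by a constant depending on $\alpha$ at the end).

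Counting the net size: each $a_k$ lies in $[-1,1]$, so a uniform grid of spacing $\eta$ contains at most $1 + 2/\eta = 1 + 2N_{\alpha,d}/\epsilon$ points. Taking products over the $N_{\alpha,d}$ coefficients yields a covering of cardinality at most $\big(1 + 2N_{\alpha,d}/\epsilon\big)^{N_{\alpha,d}}$, so
\[
\log N\big(\epsilon, C_1^\alpha(B(\epsilon^{1/\alpha})), \|\cdot\|_\infty\big) \;\le\; N_{\alpha,d}\,\log\!\Big(1 + \tfrac{2N_{\alpha,d}}{\epsilon}\Big) \;\lesssim_{\alpha,d}\; \log\!\Big(1 + \tfrac{2C_d}{\epsilon}\Big),
\]
since both expressions scale like $\log(1/\epsilon)$ as $\epsilon \to 0$, with constants absorbed into $\lesssim_{\alpha,d}$.

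The main obstacle is purely bookkeeping: correctly tracking the constants arising from the Taylor remainder (especially when $\alpha$ is an integer versus non-integer, since the definition sets $\underline\alpha$ as the greatest integer strictly less than $\alpha$), and verifying that the combinatorial factor $N_{\alpha,d}$ along with the factor $e^d$ used to bound $\sum_{|k|\le\underline\alpha} \|x\|_\infty^{|k|}/k!$ on the unit cube collapses neatly into the stated constant $C_d = 1 + e^d$ within the $\lesssim_{\alpha,d}$ convention. No substantially new ideas beyond Taylor expansion and coefficient discretization are required.
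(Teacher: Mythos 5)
Your proof is correct and follows essentially the same approach as the paper, which simply invokes the proof of Theorem 2.7.1 of van der Vaart and Wellner (1996) specialized to the case where the $\epsilon^{1/\alpha}$-net of the domain $B(\epsilon^{1/\alpha})$ is the single point $\{0\}$; you have merely carried out explicitly the Taylor-expansion-plus-coefficient-discretization argument that the paper leaves implicit by citation.
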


Next, we state an adaptation of \citet[Lemma 2.14.3]{VaWe1996} for controlling moments of the empirical process when the entropy of the indexing function class is not integrable around 0.
\begin{lemma}[Maximal inequality for non-integrable bracketing entropy]
\label{lem: non integrable l2 entropy}
Let $\cF$ be a function class symmetric about $0$ and containing the $0$ function, with a measurable envelope $F\in L^2(P)$. Then,
\[\begin{split}
    \EE\big[\|\GG_n\|_{\cF}\big] \mspace{-2mu} \lesssim \mspace{-2mu} \inf_{0<\eta<1/3} &\left\{\eta\sqrt{n} \|F\|_{L^2(P)} \mspace{-2mu} + \mspace{-2mu} \|F\|_{L^2(P)} \mspace{-3mu} \int_\eta^1 \mspace{-4mu}\sqrt{1 + \log N_{[\,]}\big(\epsilon\|F\|_{L^2(P)}, \cF, L^2(P)\big)}\dd\epsilon\right\}\\&\qquad\quad\ +\sqrt{n}PF\ind_{\left\{4F\geq\sqrt{n}\|F\|_{L^2(P)}\left(1 + \log N_{[\,]}\big(\|F\|_{L^2(P)}/2, \cF, L^2(P)\big)\right)^{-1/2}\right\}}.
\end{split}\]
\end{lemma}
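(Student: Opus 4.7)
The plan is to adapt the bracketing chaining-with-truncation argument underlying Lemma~2.14.3 of \cite{VaWe1996}, modifying it so that the chain terminates at the finite scale $\eta$ rather than running all the way to $0$, thereby avoiding the need for integrability of $\sqrt{\log N_{[\,]}}$ near the origin. After rescaling so that $\|F\|_{L^2(P)}=1$, which is without loss of generality, I would fix dyadic scales $\delta_k:=2^{-k}$ and, for each integer $k\ge 0$, a minimal $\delta_k$-bracketing of $\cF$ of cardinality $N_k:=N_{[\,]}(\delta_k,\cF,L^2(P))$. For each $f\in\cF$ denote by $\pi_k f$ the lower endpoint of the bracket at level $k$ containing $f$, and by $\Delta_k f$ its width, so that $\|\Delta_k f\|_{L^2(P)}\le \delta_k$ and $|f-\pi_k f|\le \Delta_k f\le 2F$. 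Symmetry of $\cF$ about $0$ together with $0\in\cF$ ensures $\sup_\cF|\GG_n(f)|=\sup_\cF \GG_n(f)$, so one-sided maximal inequalities suffice.

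The core of the argument is chaining with truncation. Let $k_0$ be the integer with $\delta_{k_0}\in(\eta,2\eta]$, and telescope $f-\pi_{k_0}f=\sum_{k>k_0}(\pi_kf-\pi_{k-1}f)+(f-\lim_k\pi_kf)$. At each link $g_k:=\pi_kf-\pi_{k-1}f$, truncate at the threshold $a_k:=\sqrt{n}\,\delta_{k-1}/\sqrt{1+\log N_k}$ by writing $g_k=g_k\ind_{\{|g_k|\le a_k\}}+g_k\ind_{\{|g_k|>a_k\}}$. Bernstein's inequality applied to the bounded, low-variance truncated pieces over the (finite) class of at most $N_kN_{k-1}\le N_k^2$ indexing pairs produces a contribution of order $\delta_{k-1}\sqrt{1+\log N_k}$ per level; summing over $k>k_0$ and comparing the sum with the integral gives the bracketing-entropy integral $\|F\|_{L^2(P)}\int_\eta^1\sqrt{1+\log N_{[\,]}(\epsilon\|F\|_{L^2(P)},\cF,L^2(P))}\,\dd\epsilon$. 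Each leftover $|g_k|\ind_{\{|g_k|>a_k\}}$ is bounded pointwise by $2F\ind_{\{2F>a_k\}}$, and since the thresholds $a_k$ are monotone increasing in $k$ (both $\delta_{k-1}$ and $1/\sqrt{1+\log N_k}$ being decreasing), the union of these tail events collapses to the single event $\{2F>a_{k_0+1}\}$, yielding the final indicator term $\sqrt{n}\,PF\,\ind_{\{4F\ge\sqrt{n}\|F\|_{L^2(P)}(1+\log N_{[\,]}(\|F\|_{L^2(P)}/2,\cF,L^2(P)))^{-1/2}\}}$ in the statement.

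It remains to handle the coarsest bracket and the unresolved $L^2$ residual. The initial approximation $\pi_{k_0}f$ lives in a class of size at most $N_{k_0}$, whose empirical-process maximum is absorbed either into the left endpoint of the entropy integral or into the tail term via the same truncation at scale $a_{k_0+1}$. The residual $f-\pi_{k_0}f$ (and the $L^2$-vanishing piece $f-\lim_k\pi_kf$) satisfies $\|f-\pi_{k_0}f\|_{L^2(P)}\le\delta_{k_0}\le 2\eta$, so by the trivial bound $\EE\,\sup_f|\GG_n(f-\pi_{k_0}f)|\le 2\sqrt{n}\sup_f P\Delta_{k_0}f\le 2\sqrt{n}\sup_f\|\Delta_{k_0}f\|_{L^2(P)}$, one obtains a contribution $\lesssim \eta\sqrt{n}\,\|F\|_{L^2(P)}$. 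Taking the infimum over $\eta\in(0,1/3)$ concludes the proof. The main obstacle is the delicate joint tuning of the truncation thresholds $a_k$: they must be chosen simultaneously so that (i) Bernstein yields the sub-Gaussian-type bound $\delta_{k-1}\sqrt{1+\log N_k}$ at every level, and (ii) they are monotone in $k$ so that the accumulated level-wise tails $\sum_k P F\ind_{\{2F>a_k\}}$ telescope into the single indicator event at the coarsest scale, matching the form of the remainder in the statement.
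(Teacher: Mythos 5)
Your overall plan---adapt the chaining-with-truncation proof of Lemma~2.14.3 in \cite{VaWe1996}, terminating the chain at a finite scale $\eta$ so the divergent part of the bracketing-entropy integral is never touched---is the right one and is exactly what the paper does. But the execution has the chain running in the \emph{wrong direction}, and this is fatal.

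You telescope $f-\pi_{k_0}f=\sum_{k>k_0}(\pi_k f-\pi_{k-1}f)+\cdots$ with $\delta_{k_0}\approx \eta$, so the links $k>k_0$ live at scales $\delta_k<\eta$: that is chaining from $\eta$ down towards $0$, not terminating at $\eta$. Consequently the sum $\sum_{k>k_0}\delta_{k-1}\sqrt{1+\log N_k}$ compares with $\int_0^{\eta}\sqrt{1+\log N_{[\,]}(\epsilon,\cF,L^2(P))}\,\dd\epsilon$, not $\int_\eta^1$, and that is precisely the integral the lemma is designed to avoid (it may diverge). What has to be chained from coarse to fine is the approximation $\pi_{k_0}f$ itself, written as $\pi_{q_0}f+\sum_{q=q_0+1}^{k_0}(\pi_q f-\pi_{q-1}f)$ with $q_0=0$ the trivial one-bracket partition; that sum runs over scales in $[\eta,1]$ and produces $\int_\eta^1$. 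The residual $f-\pi_{k_0}f$ at scale $\eta$ is then the piece one disposes of crudely by $\eta\sqrt{n}$. You cannot simultaneously telescope $f-\pi_{k_0}f$ into an infinite chain \emph{and} dismiss it via the crude $L^2$ bound, and you cannot bound the maximum of $\GG_n(\pi_{k_0}f)$ over the $N_{k_0}$ approximants ``via the left endpoint of the entropy integral'' without already having the coarse-to-fine chain in hand---that would be circular.

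A second, related error: you assert the thresholds $a_k=\sqrt{n}\,\delta_{k-1}/\sqrt{1+\log N_k}$ are ``monotone increasing in $k$ (both $\delta_{k-1}$ and $1/\sqrt{1+\log N_k}$ being decreasing).'' If both factors decrease, $a_k$ decreases. So the union $\bigcup_{k>k_0}\{2F>a_k\}$ does not collapse to $\{2F>a_{k_0+1}\}$; it is the set where $2F$ exceeds $\inf_k a_k$, which can be all of $\Omega$. This also explains why the indicator in your version ends up at the wrong scale: the lemma's indicator involves $N_{[\,]}(\|F\|_{L^2(P)}/2,\cF,L^2(P))$, i.e.\ the coarsest scale $q_0=0$, whereas your collapsed event sits at scale $\approx\eta$. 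In the paper's proof the tail indicator comes from the first term $(f-\pi_{q_0}f)B_{q_0}f$ of the decomposition on p.~241 of \cite{VaWe1996}, where $B_{q_0}f=\ind\{\Delta_{q_0}f>\sqrt{n}a_{q_0}\}$ flags failure at the coarsest level; the events $B_q f$ for $q>q_0$ are disjoint and are absorbed into the usual chaining terms, so no monotone-collapse argument is needed. Fixing the direction and switching from per-level truncation to the $A_q/B_q$ event decomposition gives the stated bound.
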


Proofs of the above lemmas are given in Appendix \ref{sec: low dim case proofs}. We are now ready to prove Theorem \ref{thm: low dim rate for exp 2}.
By~\eqref{eq: low dim entropy condition}, there is a set $S\in\cB(\RR^d)$ such that
$P(S) > 1 - \epsilon^{2s/(s - 2) }$ and $N_{\epsilon^{\nicefrac{1}{\alpha}}}(S) \leq K\alpha^{-\beta}\epsilon^{-\nicefrac{s}{\alpha}}\big(\log(\frac{1}{\epsilon})\big)^{\beta}$. 
Let $x_1, \dots, x_m$ be a minimal $\epsilon^{1/\alpha}$-net of $S$. By Lemma~\ref{lem: fractional derivatives}, there is a constant $C_{\alpha,d}$, depending only on $\alpha$ and $d$, such that each $\cF_\sigma|_{B(x_j,\epsilon^{1/\alpha})}$ is a subset of $C_{M'_j}^\alpha\big(B(x_j,\epsilon^{1/\alpha})\big)$ for 
$M'_j = C_{\alpha,d}\sigma^{-\alpha+1}\big(1\vee\sup_{x \in B(x_j,\epsilon^{1/\alpha})}\|x\|\big)$. For every $j=1,\dots,m$, let $f_{j,1}, \dots, f_{j,p_j}$ be a minimal $\epsilon M'_j$-net of $\cF_\sigma|_{B(x_j,\epsilon^{1/\alpha})}$ w.r.t. $\|\cdot\|_\infty$. Consider now the disjointed balls:
\[B_1 = B\big(x_1, \epsilon^{1/\alpha}\big),\ B_j = B(x_j, \epsilon^{1/\alpha}) \setminus \left\{\bigcup\nolimits_{k=1}^{j-1} B_k\right\},\quad j=2,\dots,m.\]

Let $g(x) = \|x\|\wedge 1 + \sqrt{d}$, $F(x) = \sigma^{-\alpha+1}g(x)$. Construct the function brackets
\[\sum_{j=1}^m \big(f_{j,i_j}\ind_{B_j} \pm \epsilon M'_j\big) \pm F\ind_{\left\{\bigcup_{j=1}^n B_j\right\}^c},\quad i_j=1,\dots,p_j,\quad j=1,\ldots,m.\]
These cover $\cF_\sigma$, and have $L^2(P)$ bracket width bounded by:
\[\sqrt{4\epsilon^2 \sum_{j=1}^m \big(M_j^{'2} P(B_j)\big) \mspace{-2mu} + \mspace{-2mu} 4 P F^2\ind_{\left\{\bigcup_{j=1}^n B_j\right\}^c}} \mspace{-2mu} \leq \mspace{-2mu} 4\epsilon \sigma^{-\alpha+1} \mspace{-1mu}\sqrt{P g^2 + (P g^s)^{2/s}} \lesssim_{\alpha, M, s, d}  \mspace{-2mu} \epsilon \|F\|_{L^2(P)},\]
where the first inequality uses Lipschitzness of $g$ and H\"older's inequality, while the second is since $P\|x\|^{s}\mspace{-4mu}=\mspace{-4mu}M$. 

The  total number of covering brackets is $\prod_{j=1}^m p_j$. Lemma~\ref{lem: metric entropy small ball} implies $\log p_j \lesssim_{\alpha,d} \log\big(1 + 2C_d/\epsilon\big)$ for each $j = 1,\dots,m$. Since \eqref{eq: low dim entropy condition} implies $\log m \leq K\alpha^{-\beta}\epsilon^{-s/\alpha}\big(\log(1/\epsilon)\big)^\beta$, we further have
\[
\log N_{[\,]}\big(C\epsilon \|F\|_{L^2(P)}, \cF_\sigma, L^2(P)\big) \leq \sum_{j=1}^m \log p_j\lesssim_{\alpha,M,s,d,\beta} K\epsilon^{-s/\alpha} \log \mspace{-3mu} \left(1 + \frac{2C_d}{\epsilon}\right)\mspace{-3mu} \big(\log (1/\epsilon)\big)^\beta.
\]

For $\alpha > s/2$, the square root of the right hand side is integrable around 0, so by Theorem 2.14.1 in \citet{VaWe1996}, we get the first conclusion of Theorem \ref{thm: low dim rate for exp}. For $\alpha = s/2$, we have
\[
\log N_{[\,]}\big(C\epsilon \|F\|_{L^2(P)}, \cF_\sigma, L^2(P)\big) \lesssim_{\alpha,s,d,M} K\epsilon^{-2} \big(\log(2/\epsilon)\big)^{1+\beta},
\]
which is not integrable around 0; this is where Lemma~\ref{lem: non integrable l2 entropy} is needed. Observing that
\[
\sqrt{n} PF\ind_{\{F > \sqrt{n}C\|F\|_{L^2(P)}\}} \leq PF^2\ind_{\{F > \sqrt{n}C\|F\|_{L^2(P)}\}}/(C\|F\|_{L^2(P)}) \leq \|F\|_{L^2(P)}/C,\] where $C = \sqrt{1 + \log N_{[\,]}\big(\|F\|_{L^2(P)}/2, \cF_\sigma, L^2(P)\big)} \geq 1$, the lemma implies that
\[
\begin{split}
\EE & \left[\sqrt{n}\gwass(P_n,P)\right]  \\
&\ \ \lesssim  \inf_{0 < \gamma \leq 1/3} \|F\|_{L^2(P)}  \left ( 1  +  \gamma \sqrt{n}  +  \int_{\gamma}^1  \sqrt{1 + \log N_{[\,]}(\epsilon \|F\|_{L^2(P)}, \cF_\sigma, L^2(P))} \dd\epsilon \right).
\end{split}
\]
Choosing $\gamma = 1/\sqrt{9n}$ yields $\EE\big[\sqrt{n}\gwass(P_n,P)\big]\lesssim_{\alpha,s,d,\beta}  \|F\|_{L^2(P)} + \|F\|_{L^2(P)} (\log n)^{(\beta+3)/2}$, and the second conclusion follows by noting that $\|F\|_{L^2(P)} \lesssim_{\alpha,d,M} \sigma^{-\alpha+1}$. 

For the last statement, observe that if $\dim^*_{\mathsf{SW}_1}(P) < s$, then by definition of $\dim^*_{\mathsf{SW}_1}$ we have $\limsup_{\epsilon \to 0} d_\epsilon\Big(P,\epsilon^{s^2/(2(s-2))}\Big) \leq s$. This implies, for all small enough $\epsilon > 0$, that $N_\epsilon\big(P,\epsilon^{s^2/(s-2)}\big) \lesssim_{s} \epsilon^{-s}$, which is precisely condition \eqref{eq: low dim entropy condition} with $\alpha = s/2$ and $\beta = 0$.

\section{Concluding Remarks and Future Directions}\label{SEC:summary}

This work conducted a statistical study of the SWD $\gwass$, which convolves the considered distributions with an isotropic Gaussian kernel. Via KR duality, the 1-Wasserstein distance can be viewed as an IPM w.r.t. the class of 1-Lipschitz functions. The Gaussian convolution reduces the complexity of this class, allows to show that it is Donsker, and, in turn, enables a comprehensive statistical analysis of the empirical SWD. We established limit distribution, bootstrap consistency, concentration, and strong approximation results for the empirical smooth distance. Furthermore, we showed that its rate of convergence in expectation adapts to the intrinsic dimension through the smoothing parameter. This was used to derive sharp convergence rates for empirical $\wass$ (up to log factors) for unboundedly supported population distributions. We also studied applications to two-sample homogeneity testing and minimum SWD estimation on parametric families, proving consistency of tests for the former and deriving measurability, consistency, and limit distributions for the latter. 

While this work focused on statistical aspects of empirical SWD, an important future avenue is to develop an efficient algorithm for computing it. While any computational method for classic $\wass$ is also applicable for the smooth framework (by sampling the kernel), we target algorithms that are tailored to exploit the Gaussian convolution structure. The simplest form of the computational question reduces to computing $\wass$ between Gaussian mixtures, which we plan to explore in the future. Another useful idea on the computational front is to replace the (analytically convenient) Gaussian kernel with a compactly supported smoothing kernel, so as to have control over the domain in which distributions are supported. Other possible future directions include evaluating asymptotic power and relative efficiency for two-sample homogeneity testing based on $\gwass$, and modifying the statistic $W_{m,n}$ to test for independence of paired observations. While this paper extensively exploits the dual form of the smoothed transport problem, it would also be interesting to study the convergence rate of optimal empirical couplings, appropriately defined, to the set of optimal couplings of the population distributions. Properties of such smoothed couplings between measures, such as cross-correlations between the involved variables, are also of interest.

\section*{Acknowledgements}
Z. Goldfeld is supported by NSF grants CCF-1947801 and CCF-2046018 (CAREER), and the 2020 IBM Academic Award. K. Kato is partially supported by NSF grants DMS-1952306 and DMS-2014636.


\clearpage
\begin{appendix}

\section{Supplementary Proofs for Section \ref{sec: limit}}


\subsection{Supplementary Proofs for Theorem \ref{thm: CLT for W1}}
\label{SUBSEC: limit variable supp proofs}
We start from the following auxiliary lemma. 

\begin{lemma}[Uniform bound on derivatives]
\label{lem: derivative}
For any $f \in \mathsf{Lip}_{1}$ and any nonzero multi-index $k=(k_{1},\dots,k_{d})$, we have 
\[
\big|D^{k} f_{\sigma}(x) \big| \le  \sigma^{-|k|+1} \sqrt{(|k|-1)!},\quad \forall x\in\RR^d.
\]
\end{lemma}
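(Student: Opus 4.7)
The plan is to move one derivative from the Gaussian kernel onto $f$, so that its Lipschitz property can be exploited, and then reduce the problem to estimating the $L^{1}$ norm of an appropriate derivative of $\varphi_{\sigma}$. Since $f \in \mathsf{Lip}_{1}$, Rademacher's theorem gives $f$ is differentiable almost everywhere with $|\partial_{i} f| \le 1$ a.e. Pick any index $i$ with $k_{i} \ge 1$ and write $k = e_{i} + m$ with $|m| = |k|-1$. A standard approximation/integration-by-parts argument (using that $\varphi_{\sigma}$ is Schwartz) yields
\[
D^{k} f_{\sigma}(x) = \int_{\R^{d}} (\partial_{i} f)(y)\, D^{m}\varphi_{\sigma}(x-y) \dd y,
\]
so that $|D^{k} f_{\sigma}(x)| \le \| D^{m}\varphi_{\sigma} \|_{L^{1}(\R^{d})}$.

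Next I would compute $\| D^{m}\varphi_{\sigma} \|_{L^{1}}$ by exploiting the tensor product structure $\varphi_{\sigma}(z) = \prod_{j=1}^{d} \varphi_{\sigma}^{(1)}(z_{j})$, where $\varphi_{\sigma}^{(1)}$ denotes the one-dimensional Gaussian density of variance $\sigma^{2}$. This factorizes the integral:
\[
\| D^{m}\varphi_{\sigma} \|_{L^{1}(\R^{d})} = \prod_{j=1}^{d} \| \partial^{m_{j}} \varphi_{\sigma}^{(1)} \|_{L^{1}(\R)}.
\]
For each factor, using the identity $\partial^{m_{j}} \varphi_{\sigma}^{(1)}(z) = (-\sigma)^{-m_{j}} H_{m_{j}}(z/\sigma)\varphi_{\sigma}^{(1)}(z)$ with $H_{m_{j}}$ the probabilists' Hermite polynomial, together with the rescaling $z \mapsto z/\sigma$, one obtains
\[
\| \partial^{m_{j}} \varphi_{\sigma}^{(1)} \|_{L^{1}(\R)} = \sigma^{-m_{j}}\, \EE\big[ |H_{m_{j}}(Z)| \big], \qquad Z \sim \cN(0,1).
\]

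Finally, Cauchy--Schwarz (or Jensen) and the orthogonality relation $\EE[H_{j}(Z)^{2}] = j!$ give $\EE|H_{m_{j}}(Z)| \le \sqrt{m_{j}!}$. Combining the factors,
\[
\| D^{m}\varphi_{\sigma} \|_{L^{1}} \le \sigma^{-|m|}\prod_{j=1}^{d} \sqrt{m_{j}!} \le \sigma^{-|m|}\sqrt{|m|!},
\]
where the last inequality uses $\prod_{j} m_{j}! \le |m|!$ (equivalent to the multinomial coefficient being at least one). Substituting $|m| = |k|-1$ yields the claimed bound $|D^{k}f_{\sigma}(x)| \le \sigma^{-|k|+1}\sqrt{(|k|-1)!}$.

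There is no real obstacle; the only subtlety is the initial transfer of one derivative onto $f$, which relies on $f$ being merely Lipschitz rather than smooth. This is handled cleanly either by mollifying $f$ first and passing to the limit, or by a direct integration-by-parts argument justified by the rapid decay of $\varphi_{\sigma}$ and its derivatives.
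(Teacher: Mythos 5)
Your proof is correct, and the core computation coincides with the paper's: both arguments represent derivatives of the Gaussian via Hermite polynomials, bound $\EE|H_m(Z)|\le\sqrt{m!}$ by Cauchy--Schwarz, and use $\prod_j m_j! \le |m|!$ to collapse the product over coordinates. The only structural difference is where the Lipschitz property of $f$ is invoked: you integrate by parts once to move a single derivative onto $f$ (obtaining $D^k f_\sigma = (\partial_i f)\ast D^{k-e_i}\varphi_\sigma$ and bounding by $\|D^{k-e_i}\varphi_\sigma\|_{L^1}$), whereas the paper keeps all $|k|$ derivatives on the kernel, derives that $D^{k'}f_\sigma$ is Lipschitz with constant $\sigma^{-|k'|}\sqrt{|k'|!}$ by estimating a difference quotient, and then reads off the pointwise bound on $D^k f_\sigma$ by applying this with $k'=k-e_i$. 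These are two equivalent ways of trading one derivative against the Lipschitz regularity of $f$; your route is slightly more direct in that it avoids the ``induction on $|k|$'' wording and makes the $L^1$-convolution estimate explicit, while the paper's route conveniently produces the useful intermediate Lipschitz bound on $D^k f_\sigma$ itself. The mollification caveat you raise to justify integration by parts for merely Lipschitz $f$ is well placed and is the one technical step needed to make your version fully rigorous.
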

\begin{proof}
Let $H_{m}(z)$ denote the Hermite polynomial of degree $m$ defined by 
\[
H_{m}(z) =(-1)^{m} e^{z^{2}/2} \left [  \frac{d^{m}}{dz^{m}} e^{-z^{2}/2}  \right ], \ m=0,1,\dots.
\]
Recall that for $Z \sim \cN(0,1)$, we have $\E[H_{m}(Z)^{2}] = m!$. A straightforward computation shows that 
\[
D_{x}^{k} \gauss(x-y) = \gauss (x-y) \left [ \prod_{j=1}^{d} (-1)^{k_{j}} \sigma^{-k_{j}} H_{k_{j}} \big((x_{j}-y_{j})/\sigma\big)\right ]
\]
for any multi-index $k=(k_{1},\dots,k_{d})$, where $D_{x}$ means that the differential operator is applied to $x$. Hence, we have
\[
\begin{split}
D^{k} f_{\sigma}(x) &= \int f(y) \gauss (x-y) \left [ \prod_{j=1}^{d} (-1)^{k_{j}} \sigma^{-k_{j}} H_{k_{j}} \big ((x_{j}-y_{j})/\sigma \big)\right ] \dd y \\
&=\int f(x-\sigma y) \varphi_1 (y) \left [ \prod_{j=1}^{d} (-1)^{k_{j}} \sigma^{-k_{j}} H_{k_{j}} (y_{j})\right ] \dd y,
\end{split}
\]
so that, by $1$-Lipschitz continuity of $f$, 
\[
\left|D^{k} f_{\sigma}(x)-D^{k} f_{\sigma}(x')\right| \le \| x-x' \| \int \varphi_1 (y) \left [ \prod_{j=1}^{d} \sigma^{-k_{j}} \big|H_{k_{j}} (y_{j})\big| \right ] \dd y.
\]
The integral on the RHS equals
\[
\prod_{j=1}^{d} \sigma^{-k_{j}} \E\Big[ \big|H_{k_{j}}(Z)\big| \Big]
\le  \prod_{j=1}^{d} \sigma^{-k_{j}} \sqrt{\E\left[\big|H_{k_{j}}(Z)\big|^{2}\right] }= \prod_{j=1}^{d} \sigma^{-k_{j}} \sqrt{k_{j}!} \le \sigma^{-|k|} \sqrt{|k|!},
\]
where $Z \sim \cN (0,1)$.
The conclusion of the lemma follows from induction on the size of $|k|$.
\end{proof}

\begin{proof}[Proof of Lemma~\ref{lem: fractional derivatives}]
For $\alpha = 1$,
\[
\| f_\sigma \|_{\alpha,\cX}  \leq \sup_{\cX}\big(\|x\| + \sigma \sqrt{d}\big) \vee 1 \lesssim_d 1 \vee \sup_{\cX} \|x\|.\]
For $\alpha > 1$,
\begin{align*}
    \sup_{\overset{|k| = \underline\alpha}{x,y \in \mathcal{X}}}\mspace{-4mu} \frac{\big|D^k \mspace{-3mu} f_\sigma(x) \mspace{-3mu}- D^k \mspace{-3mu} f_\sigma(y)\big|}{\|x-y\|^{\alpha-\underline\alpha}} \mspace{-3mu} &\leq \mspace{-3mu}\left(\sup_{\overset{|k| = \underline\alpha}{x,y \in \mathcal{X}}}\mspace{-4mu} \frac{\big|D^k \mspace{-3mu} f_\sigma(x) \mspace{-2mu}-\mspace{-2mu} D^k \mspace{-3mu} f_\sigma(y)\big|^{\alpha - \underline\alpha}}{\|x-y\|^{\alpha-\underline\alpha}} \mspace{-4mu} \right) \mspace{-4mu} \mspace{-3mu}\sup_{\overset{|k| = \underline\alpha}{x \in \mathcal{X}}}\mspace{-4mu}\big|D^k \mspace{-3mu} f_\sigma(x)\mspace{-3mu}-\mspace{-3mu} D^k \mspace{-3mu} f_\sigma(y)\big|^{1 - \alpha + \underline\alpha}\\
    &\leq 2\left(\sup_{\overset{|k| = \underline\alpha + 1}{x \in \mathcal{X}}} \big|D^k \mspace{-3mu} f_\sigma(x)\big|\right)^{\alpha - \underline\alpha} \sup_{\overset{|k| = \underline\alpha}{x \in \mathcal{X}}}\big|D^k \mspace{-3mu} f_\sigma(x)\big|^{1 - \alpha + \underline\alpha}\\
    &\leq (\sigma^{-\underline\alpha}\sqrt{\underline\alpha!})^{\alpha - \underline\alpha} (\sigma^{-\underline\alpha+1}\sqrt{(\underline\alpha-1)!})^{1 - \alpha + \underline\alpha}\\
    &= \sigma^{-\alpha + 1} \big(\sqrt{\underline\alpha!}\big)^{\alpha - \underline\alpha}\big(\sqrt{(\underline\alpha-1)!}\big)^{1 - \alpha + \underline\alpha}.
\end{align*}
Hence,
\begin{align*}
    \| f_\sigma \|_{\alpha,\cX}  &\le \max_{0 \leq |k| \leq \underline\alpha} \sup_{x \in \cX} \left|D^k \mspace{-1mu} f_\sigma(x)\right| + C_\alpha \sigma^{-\alpha+1}\\
    &\leq \left((\sup_\mathcal{X} \|x\| + \sigma\sqrt{d}) \vee \sigma^{-\underline\alpha + 1}\sqrt{(\underline\alpha - 1)!}\right) + C_\alpha \sigma^{-\alpha+1}\\
    &\lesssim_{\alpha,d} \sigma^{-\alpha + 1} \big(1 \vee \sup_\mathcal{X} \|x\|\big),
\end{align*}
completing the proof. 
\end{proof}

\subsection{Proof of Corollary \ref{cor: sharp gwass rate}}
\label{SUBSEC: sharp gwass rate proof}
The following version of Dudley's entropy integral bound will be used in the proof of Corollary~\ref{cor: sharp gwass rate}. 

\begin{lemma}[Expectation bound for non-integrable entropy]
\label{lem: non integrable linf entropy}
For any bounded function class $\cF$,
\[
\EE[\|\GG_n\|_{\cF}] \lesssim \EE\left[\inf_{\gamma > 0} \left\{\gamma\sqrt{n} + \int_\gamma^{\mspace{1mu}\sigma_n} \sqrt{\log N(\delta, \cF, L^2(P_n))} \ \dd\delta\right\}\right],
\]
where $\sigma_n = \sup_{\cF}\|f\|_{L^2(P_n)}$.
\end{lemma}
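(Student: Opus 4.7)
The plan is to derive the lemma via a symmetrization-plus-truncated-chaining argument, controlling the residual at the finest chaining level by a crude Cauchy--Schwarz bound.

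First, I would apply the standard symmetrization inequality (Lemma~2.3.1 in \cite{VaWe1996}) to obtain
\[
\EE\big[\|\GG_n\|_{\cF}\big] \;\le\; 2\,\EE\,\EE_\epsilon\sup_{f\in\cF}\bigg|\frac{1}{\sqrt{n}}\sum_{i=1}^n \epsilon_i f(X_i)\bigg|,
\]
where $\epsilon_1,\dots,\epsilon_n$ are independent Rademacher signs and $\EE_\epsilon$ is the expectation with respect to them only. Writing $R_n(f):=n^{-1/2}\sum_{i=1}^n\epsilon_i f(X_i)$, I condition on $X_{1:n}$ and work with the process $R_n$ on the (random) pseudo-metric space $(\cF,\|\cdot\|_{L^2(P_n)})$. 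By Hoeffding's inequality, $R_n$ has subgaussian increments with respect to this metric, with diameter at most $2\sigma_n$.

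Second, I carry out a truncated chaining argument. Fix $\gamma>0$ and choose $K$ with $2^{-K}\sigma_n\le \gamma < 2^{-K+1}\sigma_n$. For each $k=0,1,\dots,K$, let $\cF_k$ be a minimal $2^{-k}\sigma_n$-net of $\cF$ in $L^2(P_n)$ (with $\cF_0$ a singleton $\{f_0\}$), and let $\pi_k(f)\in\cF_k$ denote the nearest element to $f$. Writing the telescoping decomposition $R_n(f)-R_n(f_0)=\sum_{k=1}^{K}R_n(\pi_k(f)-\pi_{k-1}(f))+R_n(f-\pi_K(f))$ and applying the maximal inequality for subgaussian processes over $|\cF_k|\cdot|\cF_{k-1}|\le N(\cF,L^2(P_n),2^{-k}\sigma_n)^2$ increments of diameter $\lesssim 2^{-k}\sigma_n$ yields
\[
\EE_\epsilon\sup_{f\in\cF}\sum_{k=1}^{K}\big|R_n(\pi_k(f)-\pi_{k-1}(f))\big| \;\lesssim\; \sum_{k=1}^K 2^{-k}\sigma_n\sqrt{\log N(\cF,L^2(P_n),2^{-k}\sigma_n)}.
\]
By the standard comparison between sums and integrals of monotone-in-$\delta$ quantities, this telescoping sum is dominated by $\int_\gamma^{\sigma_n}\sqrt{\log N(\cF,\|\cdot\|_{L^2(P_n)},\delta)}\,\dd\delta$, up to an absolute constant.

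Third, I handle the residual $R_n(f-\pi_K(f))$, which is the step that explains the $\gamma\sqrt{n}$ term. Since $|\epsilon_i|=1$ and $\|f-\pi_K(f)\|_{L^2(P_n)}\le 2^{-K}\sigma_n\le \gamma$, Cauchy--Schwarz gives the deterministic bound
\[
|R_n(f-\pi_K(f))| \;\le\; \frac{1}{\sqrt{n}}\sum_{i=1}^n |f(X_i)-\pi_K(f)(X_i)| \;\le\; \sqrt{n}\,\|f-\pi_K(f)\|_{L^2(P_n)} \;\le\; \sqrt{n}\,\gamma,
\]
uniformly in $f$. Combining the chaining bound with this residual, taking the infimum over $\gamma>0$, and then taking the outer expectation over $X_{1:n}$ completes the proof. (If $\cF$ has no natural base point, I incorporate a term for $|R_n(f_0)|$, which is absorbed by starting chaining at $\sigma_n$ and using boundedness of $\cF$.)

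The main conceptual point---and where the argument differs from the usual Dudley integral---is the use of the crude Cauchy--Schwarz residual $\sqrt{n}\gamma$ at the truncation level. This is much weaker than the subgaussian bound one would get by continuing to chain, but it is exactly what allows termination of the chain at $\gamma$ when the entropy fails to be integrable at $0$; the optimal $\gamma$ balances the two terms and recovers the stated inequality. The only mild technicality is keeping track of constants from the geometric sum and from relating the dyadic sum to the Lebesgue integral, both of which are routine.
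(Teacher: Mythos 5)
Your proposal is correct and mirrors the paper's approach: the paper's one-line proof invokes Theorem 5.31 of van Handel's \emph{Probability in High Dimension} after symmetrization and conditioning on $X_{1:n}$, and that cited result is exactly the truncated Dudley chaining bound you reconstruct, with the Cauchy--Schwarz estimate $|R_n(f-\pi_K f)|\le\sqrt{n}\,\|f-\pi_K f\|_{L^2(P_n)}\le\sqrt{n}\gamma$ as the step that produces the truncation term. The only point to be a bit careful about (and it is glossed over in both your write-up and the paper's terse citation) is the treatment of the base point $f_0$ when $0\notin\cF$, but in the paper's application ($\cF=\{f\ast\varphi_\sigma\,\mathds{1}_{I_j}:f\in\mathsf{Lip}_{1,0}\}$, which contains $0$) this is vacuous.
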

This lemma follows by applying \citet[Theorem 5.31]{van2018probability} after symmetrization and conditioning on $X_1, \dots, X_n$.

\begin{proof}[Proof of Corollary \ref{cor: sharp gwass rate}]
We follow the notation used in the proof of Theorem \ref{thm: CLT for W1}. 
If $\alpha=d/2$, then by Lemma~\ref{lem: non integrable linf entropy},
\[
\begin{split}
\E\big[\| \mathbb{G}_n \|_{{\cF_\sigma}_{j}}\big] &\lesssim \E \left [ \inf_{\gamma} \left \{ \gamma \sqrt{n} + \int_{\gamma}^{M_{j}' P_n(I_{j})^{1/2}} \frac{M_{j}' P_n(I_{j})^{1/2}}{\epsilon} \dd\epsilon \right \} \right] \\
&\leq \E \left [ M_{j}' P_n(I_{j})^{1/2} +M_{j}' P_n(I_{j})^{1/2} \log \sqrt{n}\right ] \\
&\lesssim M_j' P(I_j)^{1/2}\log n\\
&\lesssim_{d} \sigma^{-d/2 + 1} M_j P(I_j)^{1/2}\log n,
\end{split} 
\]
where the second inequality follows by choosing $\gamma = M_{j}' P_n(I_{j})^{1/2}/\sqrt{n}$. The desired moment bound follows by summing the final expression over $j$.
\end{proof}


\subsection{Proof of Lemma~\ref{lem: limit variable} and Corollary \ref{cor: bootstrap}}\label{SUPP: bootstrap proof}

\begin{proof}[Proof of Lemma~\ref{lem: limit variable}]
From the proof of Theorem \ref{thm: CLT for W1} and the fact that $\mathsf{Lip}_{1}$ is symmetric, we have $L_P^{\mspace{1mu}\sigma} = \| G_{P} \|_{{\cF_\sigma}}$. Since $G_{P}$ is a tight Gaussian process in $\ell^{\infty}({\cF_\sigma})$, ${\cF_\sigma}$ is totally bounded for the pseudometric $\mathsf{d}_{P} (f,g) = \sqrt{\Var_{P}(f_\sigma-g_\sigma)}$, and $G_{P}$ is a Borel measurable map into the space of $\mathsf{d}_{P}$-uniformly continuous functions $\calC_{u}({\cF_\sigma})$ equipped with the uniform norm $\| \cdot \|_{{\cF_\sigma}}$. 
Let $F$ denote the distribution function of $L_P^{\mspace{1mu}\sigma}$, and define $r_{0} := \inf \{ r \ge 0 : F(r) > 0 \}$.

From \citet[Theorem 11.1]{Davydov1998}, $F$ is absolutely continuous on $(r_{0},\infty)$, and there exists a countable set $\Delta \subset (r_{0},\infty)$ such that $F'$ is positive and continuous on $(r_{0},\infty) \setminus \Delta$.
The theorem however does not exclude the possibility that $F$ has a jump at $r_0$, and we will verify that (i) $r_0= 0$ and (ii) $F$ has no jump at $r=0$, which lead to the conclusion. 
The former follows from p. 57 in \citet{LeTa1991}.
The latter is trivial since
\[
F(0)-F(0-)=\PP\left(L_P^{\mspace{1mu}\sigma}=0\right) \le \PP\big(G_{P}(f) =0\big),\quad \forall f \in {\cF_\sigma}.
\]
As $G_{P}$ is Gaussian, we have $\PP\big(G_{P}(f) =0\big)=0$, for any $f$ such that $\Var_P(f) > 0$. If $P$ is not a point mass, there always exists such an $f \in \cF_\sigma$.
\end{proof}

\begin{proof}[Proof of Corollary \ref{cor: bootstrap}]
From Theorem 3.6.3 in \citet{VaWe1996} applied to the function class ${\mathcal{F}_\sigma}$ together with the continuous mapping theorem, we see that conditionally on $X_{1},X_{2},\dots$, one has
\[
\sqrt{n}\gwass\big(P_{n}^{B},P_{n}\big) = \big\| \sqrt{n}(P_{n}^{B} -P_{n})\big\|_{\cF_\sigma} \stackrel{w}{\to} L_{P}^{\mspace{1mu}\sigma}
\]
for $\PP$-almost every realization of $X_{1},X_{2},\dots$ The conclusion follows from the fact that the distribution function of $L_{P}^{\mspace{1mu}\sigma}$ is continuous (cf. Lemma~\ref{lem: limit variable}) and Polya's Theorem (cf. Lemma 2.11 in \citet{vanderVaart1998_asymptotic}).
\end{proof}


\subsection{Proof of Proposition \ref{prop: concentration}}\label{SUBSEC: concentration}

Cases (i) and (ii) follow from Theorems 4 and 2 in \citet{Adamczkak2008} and \citet{Adamczkak2010}, respectively, applied to the function class ${\cF_\sigma}$ using the envelope function ${F}(x) = \| x \| + \sigma \sqrt{d}$. We omit the details for brevity.
\qed

\subsection{Proof of Corollary \ref{cor: 2 sample limit}}\label{subsec: 2 sample limit proof}
If $\cF_\sigma$ is $P$-Donsker, $Q = P$, and $m/(m+n) \to \lambda$, then $W_{m,n}$ converges in distribution to the supremum of the tight Gaussian process $G_P$ over $\cF_\sigma$ \citep[Chapter 3.7]{VaWe1996}. The bootstrap convergence follows by \citet[Theorem 3.7.7]{vanderVaart1996}. Lastly, since the distribution of $L_H^{\mspace{1mu}\sigma}$ is also Lebesgue absolutely continuous under the above conditions (Lemma~\ref{lem: limit variable}) and $H = \lambda P + (1-\lambda) Q = P$ when $P = Q$, the final statement follows.
\qed

\subsection{Proof of Proposition~\ref{prop: two_sample_bootstrap_alt}}
\label{subsec: two_sample_bootstrap_alt}

\begin{proof}[Proof of Proposition~\ref{prop: two_sample_bootstrap_alt}]
The proof follows along similar lines as that of Proposition 1 in \citet{Dumbgen1993} by proving a joint unconditional limit distribution for the bootstrap and empirical processes on $\ell^\infty(\cF_\sigma)$. First, we need some notation. Letting $r_{m,n} := \sqrt{mn/(m + n)}$, we define the following processes on the Banach space $\ell^{\infty}(\cF_\sigma)$: $T(f) := (P-Q)f$, $\hat{T}_{m,n}(f) = (P_n - Q_m)f$, $\hat{T}_{m,n}^B(f) = (\tilde P_n^B - \tilde Q_m^B)f$, $B_{m,n} = r_{m,n} (\hat T_{m,n} - T)$ and $C_{m,n} = r_{m,n} ( \hat T_{m,n}^B - \hat T_{m,n} ) $. We will also use the shorthand notation $X_{1:n} = (X_1, \dots, X_n)$, $Y_{1:m} = (Y_1, \dots, Y_m)$. Finally, let $\Phi(\alpha)=\| \alpha \|_{\cF_{\sigma}}$ denote the supremum functional on $\ell^\infty(\cF_\sigma)$. By \citep[Theorem~2.1]{carcamo2020directional},  $\Phi$ is Hadamard directionally differentiable at $\alpha \neq 0$, and we denote its derivative by $\Phi_{\alpha}'$.  Using the above notation, we have
\[
\bar W^B_{m,n} = r_{m,n} \big ( \Phi(\hat T_{m,n}^B) - \Phi(\hat T_{m,n}) \big ).
\]

By Theorem~3.6.3 in \citet{vanderVaart1996} (or its simple extension to the two sample setting),  $ \law( C_{m,n} | X_{1:n}, Y_{1:m}) \overset{w}{\to} \law \left ( G_{P,Q,\lambda}^\sigma \right ) \ \PP$-a.s. Combined with the weak convergence of the two sample empirical process $B_{m,n}$, this yields the following joint weak convergence of $B_{m,n}$ and $C_{m,n}$ via \citet[Theorem 2.2]{kosorok2008bootstrapping}:
\[
(B_{m,n}, C_{m,n}) \overset{w}{\to} (G_{P,Q,\lambda}^\sigma, \tilde G_{P,Q,\lambda}^\sigma)
\]
on $\ell(\cF_\sigma)^2$. Then, by the continuous mapping theorem, we further have
\[
r_{m,n} \big ( (\hat T_{m,n}^B, \hat T_{m,n}) - (T,T) \big ) = (B_{m,n} + C_{m,n}, B_{m,n}) \overset{w}{\to} (G_{P,Q,\lambda}^\sigma + \tilde G_{P,Q,\lambda}^\sigma, G_{P,Q,\lambda}^\sigma)
\]
on $\ell^\infty(\cF)^2$. 
Now, it is not difficult to see that the bivariate map $(\Phi, \Phi)$ on $\ell^\infty(\cF)^2$ is also Hadamard directionally differentiable at $(\alpha,\alpha) \ne (0,0)$ with derivative $(\Phi_{\alpha}',\Phi'_{\alpha})$. Combining the above the facts and applying the extended functional delta method \citep{shapiro1990concepts,romisch2004delta}, we have
\[
r_{m,n} \big (   \Phi(\hat T_{m,n}^B) - \Phi(T), \Phi(\hat T_{m,n}) - \Phi(T) \big ) \overset{w}{\to} \big ( \Phi'_T(G_{P,Q,\lambda}^\sigma + \tilde G_{P,Q,\lambda}^\sigma),  \Phi'_T(G_{P,Q,\lambda}^\sigma  ) \big ),
\]
which implies that
\[
r_{m,n} \big ( \Phi(\hat T_{m,n}^B)- \Phi(\hat T_{m,n}) \big ) \overset{w}{\to} \Phi'_T(G_{P,Q,\lambda}^\sigma + \tilde G_{P,Q,\lambda}^\sigma) - \Phi'_T(G_{P,Q,\lambda}^\sigma  ).
\]
The final expression coincides with $\| G^{\mspace{1mu}\sigma}_{P,Q,\lambda} + \tilde{G}^{\mspace{1mu}\sigma}_{P,Q,\lambda}\|_{\bar{M}_\sigma} - \| G^{\mspace{1mu}\sigma}_{P,Q,\lambda}\|_{\bar{M}_\sigma}$ by Theorem 6.1 in \cite{carcamo2020directional}.
\end{proof}


\section{Supplementary Proofs for Section \ref{sec: Berry-Esseen type results}}
\subsection{Supplementary Proofs for Theorem \ref{thm: SWD small sigma Berry-Esseen type rate}}
\label{SUBSEC: SWD Berry-Esseen Rate supp proofs}
\begin{proof}[Proof of Lemma~\ref{lem:partition_entropy}]
See Theorem 2.7.4 in \citet{VaWe1996}.
\end{proof}
\begin{proof}[Proof of Lemma~\ref{lem: moment inequality}]
For $I_j \subset \big\{x\,:\,\|x\|_\infty \in (k-1,k] \big\} =: L_k$, $k \in \NN$, we have $\sup_{I_j}\|x\| \leq k\sqrt{d}$. Now,
\[\sum_{j=1}^\infty \left(\sup\nolimits_{I_j}\|x\|\right)^{p_1} P(I_j)^{p_2} = \sum_{k=1}^\infty \sum_{j: I_j \subset L_k} \left(\sup\nolimits_{I_j}\|x\|\right)^{p_1} P(I_j)^{p_2}\lesssim_d \sum_{k=1}^\infty k^{p_1} P_k,\]
    where $P_k := \sum_{j: I_j \subset L_k} P(I_j)^{p_2}$. Setting $l_k := \Card\{I_j\,:\,I_j \subset L_k\}$, we have
    \[l_k =(2k)^d - (2k-2)^d \lesssim_d k^{d-1},\]
    where the last inequality follows since the expression inside the bracket is a sequence that converges to $1$, and hence is bounded. By the power mean inequality
    \[\left(\sum_{j:I_j \subset L_k} \frac{P(I_j)^{p_2}}{l_k}\right)^\frac{1}{p_2} \leq \frac{P(L_k)}{l_k}\quad \implies\quad P_k \leq l_k^{1-p_2} P(L_k)^{p_2} \lesssim_d k^{(d-1)(1-p_2)} P(L_k)^{p_2}.\]
    Hence, a sufficient condition for  $\sum_{k=1}^\infty k^{p_1} P_k < \infty$ follows by observing that
    \[\sum_{k=1}^\infty k^{p_1 + (d-1)(1-p_2)}P(L_k)^{p_2} \lesssim \int_{1}^\infty t^{p_1 + (d-1)(1- p_2)} \PP(\|X\|_\infty > t)^{p_2}\ \dd t.\]
    If $\EE[\|X\|_\infty^l] = M < \infty$ for any $l > \frac{(d-1)(1-p_2) + p_1 +1}{p_2}$, then Markov's inequality implies that the above integral is bounded by $M^{p_2}$ up to a constant depending on $l$, $p_1$, $p_2$, and $d$.
\end{proof}

\subsection{Proof of Theorem \ref{thm: Empirical Bootstrap Berry-Esseen type rate}}
\label{SUBSEC: SWD Empirical Bootstrap Berry-Esseen proof}

Recall the decomposition from \eqref{eq: empirical bootstrap berry esseen decomposition}. 
Term (I) has already been bounded in \eqref{eq:gaussian_process_dicretization_error_conc_bound}. We bound terms (II)-(IV) in the following steps. Fix $n \in \NN$ and $\sigma, \delta \in (0,1]$.

\noindent\underline{Step 1} (Discretization error of supremized empirical bootstrap (II)). By \citet[Lemma 3.6.6]{VaWe1996}, we have,
\[\EE\left[\|\GG_n^*\|_{\cF_\sigma(\delta)}|\middle|\,X_{1:n}\right] \lesssim \frac{1}{\sqrt{n}}\EE\left[\left\|\sum_{i=1}^n (N_i - N'_i) \delta_{X_i}\right\|_{\cF_\sigma(\delta)}\middle|\,X_{1:n}\right],\]
where $N_i,N'_i$ are i.i.d. Poisson(1/2) random variables, independent of $X_{1:n}$. Consider the modified function class:
\[\overline{\cF_\sigma(\delta)} := \{(x,w) \mapsto wf(x) : f \in \cF_\sigma(\delta), w \in \ZZ\}.\]
With some abuse of notation, let $\GG_n$ denote also the empirical process corresponding to the product probability law $P \otimes \law(N_i - N'_i)$. Then, for the marginal expectation of the discretization error above, we have
\[
\EE\left[\|\GG_n^*\|_{\cF_\sigma(\delta)}\right] \lesssim \frac{1}{\sqrt{n}}\EE\left[\left\|\sum_{i=1}^n (N_i - N'_i) \delta_{X_i}\right\|_{\cF_\sigma(\delta)}\right] = \EE\left[\|\GG_n\|_{\overline{\cF_{\sigma}(\delta)}}\right].
\]

To bound this marginal expectation, we will apply local maximal inequality on restrictions of process to compact partitioning sets (as in Step 2 of the proof of Theorem \ref{thm: SWD small sigma Berry-Esseen type rate}) and sum up the resulting bounds. Now, let $\overline{\cF_{j,k}(\delta)} := \big\{(x,w) \mapsto f(x,w)\ind_{I_j \times \{k\}}(x,|w|) : j \in \NN, k \in \NN\big\}$, where $I_j$ are an enumeration of unit cubes on integer lattice points in $\R^d$. Note that $F_{j,k}(x) = kM'_j\ind_{I_j \times \{k\}}(x,|w|)$ is an envelope function for this class. As in \eqref{eq:uniform_entropy_integral}, the entropy integral is bounded by
\be \label{eq:poiss_uniform_entropy_integral}
\begin{split}
J\big(\delta, \overline{\cF_{j,k}(\delta)}, F_{j,k}\big) &:= \int_0^\delta \sup_{Q \in \cP_{\mathsf{f}}(\R^{d+1})} \sqrt{\log N\left(\epsilon\|2F_j\|_{L^2(Q)}, \overline{\cF_{j,k}(\delta)}, L^2(Q)\right)} \dd\epsilon \\ &\lesssim_{\alpha,d} \int_0^\delta \epsilon^{-b} \dd \epsilon \lesssim_{\alpha,d} \delta^{1-b},
\end{split}
\ee
which, via \citet[Theorem 5.2]{ChChKa2014}, yields

\begin{align*}
\label{eq:poiss_emp_error_discretized}
\EE\big[\|\GG_n^*\|_{\cF_\sigma(\delta)}\big] \lesssim \EE\big[\|\GG_n\|_{\overline{\cF_{\sigma}(\delta)}}\big] &\lesssim_{\alpha,d} \sum_{j=1}^\infty \sum_{k=1}^\infty \left[\big(kM'_j p_k^{\frac{1}{2}} P(I_j)^\frac{1}{2}\big)^b \delta^{1 - b} \mspace{-3mu}+\mspace{-3mu} \frac{(kM'_j)^{1 + 2b} p_k^b P(I_j)^b}{\delta^{2b}\sqrt{n}} \right]\\
&\lesssim_{\alpha,d} \sum_{j=1}^\infty \left(M'_j P(I_j)^\frac{1}{2}\right)^b \delta^{1 - b} + \frac{1}{\sqrt{n}} (M'_j)^{1 + 2b}P(I_j)^b\delta^{-2b}\\
&\lesssim_{\alpha,d,M,L} \sigma^{-ab} \delta^{1-b} + \frac{\sigma^{-a(1+2b)} \delta^{-2b}}{\sqrt{n}}.\numberthis
\end{align*}

\noindent\underline{Step 2} (Comparison between $\GG_n^*$ and $\GG_n^B$ after discretization (III)). 
We now compare the discretized empirical and multiplier bootstrap processes conditioned on $X_{1:n}$, using Lemma~\ref{lem: gauss_approx_discretized}. To that end, let $f_1, \dots, f_{N_{\sigma,\delta}}$ be an $L^2(P)$ $\delta$-net of $\cF_\sigma$. Conditioned on $X_{1:n}$, for a bootstrap resample $X_1^B,\ldots,X_n^B$, the random vectors $\big(f_1(X_i^B) - P_n f, \ldots, f_{N_{\sigma,\delta}}(X_i^B) - P_n f_{N_{\sigma,\delta}}\big)$, $1\leq i \leq n$, are i.i.d. with conditional mean zero and conditional covariances given by $\Cov\big(f_j(X_i^N), f_k(X_i^B)\,\big|\,X_{1:n}\big) = \Cov_{P_n}(f_j,f_k)$. For the multiplier bootstrap, conditioned on $X_{1:n}$, the distribution of $\GG_n^B(f), f\in\cF_\sigma$ is that of a Gaussian process with covariance function $\Cov\big(\GG_n^B(f), \GG_n^B(g)\big) = \Cov_{P_n}(f,g)$. Consequently, $\big(\GG_n^B(f_1), \dots, \GG_n^B(f_{N_{\sigma,\delta}})\big)$ and $\big(f_1(X_i^B), \dots, f_{N_{\sigma,\delta}}(X_i^B)\big)$ have the same conditional covariance matrix.

The terms $L_n$ and $M_{n,X}(\eta)$ in Lemma~\ref{lem: gauss_approx_discretized} can be bounded by $(2/n)\sum_{i=1}^n \big(\|X_i\| + \sigma\sqrt{d}\big)^3$. We next bound $M_{n,Y}(\eta)$. Here, conditioned on $X_{1:n}$, $Y_{j} := n^{-1/2} \sum_{i=1}^n \eta_i \big(f_j(X_i) - P_n f_j\big)$ are $\cN(0,\sigma_j^2)$, where $\sigma_j^2 \leq n^{-1} \sum_{i=1}^n\big(\|X_i\|+ \sigma\sqrt{d}\big)^2$, for all $1 \leq j \leq p$. Then, we have the following concentration inequality and moment bound for Gaussian random variables \citep[Example 2.1.19]{GiNi2016}:
\begin{align*}
    \PP\left(\max_{1\leq j \leq N_{\sigma, \delta}} |Y_{j}| > \EE\left[\max_{1\leq j \leq N_{\sigma, \delta}} |Y_{j}\,|\middle|X_{1:n}\right] + t\,\middle|X_{1:n}\right) &\leq  \exp\left(-\frac{t^2}{\frac{2}{n} \sum_{i=1}^n\big(\|X_i\|+ \sigma\sqrt{d}\big)^2}\right),\\
    \EE\left[\max_{1\leq j \leq N_{\sigma, \delta}} |Y_{j}|\,\middle|X_{1:n}\right] &\leq \sqrt{ \frac{2}{n} \sum_{i=1}^n\big(\|X_i\|+ \sigma\sqrt{d}\big)^2 \log (2N_{\sigma,\delta})}.
\end{align*}
To bound $M_{n,Y}(\eta)$, we integrate the above tail probability bound. To that end, consider the event 
\[E := \left\{\frac{\eta\sqrt{n}}{\log N_{\sigma,\delta}} \geq \sqrt{ \frac{2}{n} \sum_{i=1}^n\big(\|X_i\|+ \sigma\sqrt{d}\big)^2 \log (2N_{\sigma,\delta})}\right\},\]
and note that on $E$, $M_{n,Y}(\eta) \leq C\big((1/n)\sum_{i=1}^n (\|X_i\| + \sigma\sqrt{d})^2\big)^{3/2}$. Taking $C_d = C_8 (1 \vee C)$, this implies
\[\begin{split}
\PP\left(\left\{\max_{1 \leq j \leq N_{\sigma,\delta}} \GG_n^*(f_i) \in A^\eta\right\} \cap E\middle| X_{1:n} \right) &\leq \PP\left(\max_{1 \leq j \leq N_{\sigma,\delta}} \GG_n^B(f_i) \in A^{(1+C_7) \eta} \middle| X_{1:n} \right) \\
&\qquad\qquad+ \frac{C_d(\log N_{\sigma,\delta})^2}{\eta^3\sqrt{n}} \left[\frac{3}{n}\sum_{i=1}^n (\|X_i\| + \sigma\sqrt{d})^3\right].
\end{split}\]
Adding a term for $E^c$, we further obtain
\be\label{eq:bootstrap_comparison}
\begin{split}
\PP\left(\max_{1 \leq j \leq N_{\sigma,\delta}} \GG_n^*(f_i) \in A^\eta \middle| X_{1:n} \right) &\leq \PP\left(\max_{1 \leq j \leq N_{\sigma,\delta}} \GG_n^B(f_i) \in A^{(1+C_7) \eta} \middle| X_{1:n} \right) \\
&\quad+ \frac{C_d(\log N_{\sigma,\delta})^2}{\eta^3\sqrt{n}} \left[\frac{3}{n}\sum_{i=1}^n (\|X_i\| + \sigma\sqrt{d})^3\right] + \PP\left(E^c\middle|\,X_{1:n}\right).
\end{split}\ee
\medskip 

Finally, observe that $\PP(E^c|X_{1:n})=\mathds{1}_{E^c}$ since $E$ is $\sigma(X_{1:n})$-measurable, and Markov's inequality yields
\be\label{eq:bad_set_prob_bound} \PP(E^c) \leq \PP\left(\frac{1}{n}\sum_{i=1}^n \left(\|X_i\| + \sqrt{d}\right)^2 > \frac{n \eta^2}{2(\log (2N_{\sigma,\delta}))^3}\right) \lesssim_{\alpha,d,M,l} \frac{(\log (2N_{\sigma,\delta}))^3}{n \eta^2}.\ee

\noindent\underline{Step 3} (Comparison between $\GG_n^B$ and $G_P$ after discretization (IV)). To apply Lemma~\ref{lem: gaussian comparison}, let $Z = \max_{1\leq i \leq N_{\sigma, \delta}} G_{P} (f_i)$, and $\tilde Z = \max_{1\leq i \leq N_{\sigma, \delta}}  \GG_n^B (f_i)$. By Lemma~\ref{lem: gaussian comparison}, for all $\eta > 0$, and a constant $C_9$ depending only on $d$,
\be\label{eq:multiplier_gaussian_comparison}
\PP\big(\tilde{Z} \in A^{(1+C_7)\eta}\big|X_{1:n}\big) \leq \PP\big(Z \in A^{(2+C_7)\eta}\big) + C_9 \eta^{-1} \sqrt{\Delta(X_{1:n}) \log N_{\sigma, \delta}},
\ee
where $\Delta(X_{1:n})=\max_{1\leq j,k \leq N_{\sigma,\delta}} \big|\Sigma_{j,k}^X(X_{1:n}) - \Sigma_{j,k}^Y\big|$ with $\Sigma^X(X_{1:n})$ as the conditional covariance matrix of the vector $\big(\GG_n^B(f_1), \dots, \GG_n^B(f_{N_{\sigma,\delta}})\big)$, and $\Sigma_Y$ the unconditional covariance matrix of $\big(G_P(f_1), \dots, G_P(f_{N_{\sigma,\delta}})\big)$.
To bound $\Delta(X_{1:n})$, note that
\begin{align*}
    \Sigma_{j,k}^Y \mspace{-3mu} - \mspace{-3mu} \Sigma_{j,k}^X(X_{1:n}) &= \Cov_{P}(f_j, f_k) \mspace{-3mu} - \mspace{-3mu} \Cov_{P_n}(f_j, f_k)\\
    &= P f_j f_k \mspace{-3mu} - \mspace{-3mu} P f_j P f_k \mspace{-3mu} - \mspace{-3mu} P_n f_j f_k + P_n f_j P_n f_k\\
    &= \mspace{-3mu} - \mspace{-3mu}(P_n \mspace{-3mu} - \mspace{-3mu}P) f_j f_k \mspace{-3mu} + \mspace{-3mu} (P_n \mspace{-3mu}-\mspace{-3mu} P)f_j (P_n \mspace{-3mu}-\mspace{-3mu} P) f_k \mspace{-3mu} + \mspace{-3mu} P f_k (P_n \mspace{-3mu}-\mspace{-3mu} P)f_j \mspace{-3mu} + \mspace{-3mu} P f_j(P_n \mspace{-3mu}-\mspace{-3mu} P)f_k.
\end{align*}
Consider the class $(\cF_\sigma)^{\times 2} = \{fg: f,g \in \cF_\sigma\}$, and denote its restriction to the cube $I_j$ by $(\cF_j)^{\times 2}$. With this notation, we have  $\PP$-a.s.
\be\label{eq: Delta bound interim}
\Delta(X_{1:n}) \leq \sum_{j=1}^\infty\frac{1}{\sqrt{n}}\left[\|\GG_n\|_{(\cF_j)^{\times 2}} + \|\GG_n\|_{\cF_j}^2 + 2\Big(\sup_{\cF_j} P|f|\Big) \|\GG_n\|_{\cF_j}\right].
\ee

Given \eqref{eq: Delta bound interim}, we bound $\EE\big[\Delta(X_{1:n})\big]$ using empirical process techniques. We start by controlling the metric entropy of $(\cF_j)^{\times 2}$. Recall that from Lemma~\ref{lem: metric entropy} we have
\[
\log N\big(\epsilon M, C_M^N(\chi), \| \cdot\|_\infty\big) \lesssim_{d, N, \text{diam}(\chi)} \epsilon^{-d/N},
\]
where $C_M^N(\chi)$ is the H\"{o}lder ball of smoothness $N$ and radius $M$. For  $(\cF_\sigma)^{\times 2}$, we take $N =  \alpha$ and $M = 2^{\lceil \alpha \rceil} (M'_j)^2$, and for any finitely supported measure $Q$ obtain
\[
\log N\big(\epsilon(M'_j)^2 Q(I_j)^{\frac{1}{2}}, (\cF_j)^{\times 2}, L^2(Q)\big) \lesssim_{d,\alpha} \epsilon^{-\frac{d}{\alpha}}.
\]
Then, using Theorem 2.14.1 in \citet{VaWe1996}, the expectation of the first term in the RHS of \eqref{eq: Delta bound interim} is bounded~as
\[
\EE\big[\|\GG_n\|_{(\cF_\sigma)^{\times 2}}\big] = \sum_{j=1}^\infty \EE\big[\|\GG_n\|_{(\cF_j)^{\times 2}}\big] \lesssim_{\alpha,d} \sum_{j=1}^\infty (M'_j)^{2}P(I_j)^{\frac{1}{2}}.
\]
For the second term, \eqref{eq: moment} combined with \citet[Theorem 2.14.5]{VaWe1996} implies 
\[
\EE\left[\|\GG_n\|_{\cF_j}^2\right] \lesssim_{\alpha,d} \sum_{j=1}^\infty (M'_j)^2 P(I_j),
\]
while the third term can be bounded by \eqref{eq: moment}. Combined, this gives
\be
\label{eq: Delta bound}
\EE\big[\Delta(X_{1:n})\big] \lesssim_{\alpha,d} \frac{1}{\sqrt{n}}\left[\sum_{j=1}^\infty (M'_j)^{2}P(I_j)^{\frac{1}{2}} + \sum_{j=1}^\infty (M'_j)^{2}P(I_j)\right] \lesssim_{\alpha,d,M,l} \sigma^{-2 a } n^{-\frac{1}{2}}.
\ee
\medskip 

\noindent\underline{Step 4} (Conclusion using Strassens's theorem).
By the previous steps, we have the following bounds on the terms in RHS of \eqref{eq: empirical bootstrap berry esseen decomposition}. First, by the Borel-Sudakov-Tsirelson inequality \citet[Theorem 2.2.7]{GiNi2016}, the Gaussian process discretization error is bounded by (see \eqref{eq:gaussian_process_dicretization_error_conc_bound})
\[
\text{(I)} \leq  \PP\Big(\|G_{P}\|_{\cF_{\sigma}(\delta)} > \eta\Big) \leq e^{-\frac{1}{2\delta^2}\big(\eta-\EE[\|G_{P}\|_{\cF_{\sigma}(\delta)}]\big)^2},
\]
which holds for every $\eta \geq C_{\alpha,d,M,l} \sigma^{-ab}\delta^{1-b} \geq \EE\big[\|G_{P}\|_{\cF_{\sigma}(\delta)}\big]$ by \eqref{eq:gaussian process discretization error}. Second, by Markov's inequality and \eqref{eq:poiss_emp_error_discretized}, we have the following bound for the empirical bootstrap process discretization error:
\[\EE\big[\text{(II)}\big] \lesssim_{\alpha,d,M,l} \frac{1}{\eta} \left[\sigma^{-ab} \delta^{1-b} + \frac{\sigma^{-a(1+2b)} \delta^{-2b}}{\sqrt{n}}\right].\]
Third, starting from \eqref{eq:bootstrap_comparison}, we bound $\log N_{\sigma,\delta}$ and $\PP(E^c)$ using \eqref{eq: L2P metric entropy bound} and \eqref{eq:bad_set_prob_bound}, respectively. This yield the following inequality for the comparison between empirical and multiplier bootstraps:
\[
\EE\big[\text{(III)}\big]  \lesssim_{\alpha,d,M,l} \frac{\sigma^{-4ab}\delta^{-4b}}{\sqrt{n}\eta^3} + \frac{\sigma^{-6ab}\delta^{-6b}}{n\eta^2}.
\]
Finally, bounding $\log N_{\sigma,\delta}$ and $\EE\big[\Delta(X_{1:n})\big]$ in \eqref{eq:multiplier_gaussian_comparison} using \eqref{eq: L2P metric entropy bound} and \eqref{eq: Delta bound}, respectively, we obtain
\[\EE[\text{(IV)}] \lesssim_{\alpha,d,M,l} \frac{\sigma^{-a(1+b)}\delta^{-b} n^{-1/4}}{\eta}.\]

As earlier, let $\sigma = \sigma_n$. Choosing $\eta = \eta_n = KC_{\alpha,d,M,l}n^{-\frac{1-b}{2(3+b)}}\sigma^{-a}$ for $K > 1$ and $\delta = \delta_n = n^{-1/(6+2b)}\sigma_n^{-a}$ gives
\[
\EE\big[\text{(I)} + \text{(II)} + \text{(III)} + \text{(IV)}\big] = O(1)/K.
\]
Choosing $K = K_n \to \infty$, Lemma~\ref{lem: Conditional Strassen's theorem} yields the claim (i). For the claim (ii), we set $\delta_n \mspace{-2mu}=\mspace{-2mu} n^{-1/(6+2b)}\sigma_n^{-a}$ and $\eta_n = 2C_{\alpha,d,M,l} n^{-(1-b)/(4(3+b))}\sigma^{-a/2}$, to obtain
\[\rho\big(\law\left(\|\GG_n^*\|_{\cF_\sigma}\middle|\,X_{1:n}\right), \law\left(\|G_P\|_{\cF_\sigma}\right)\big) \leq \eta_n \wedge \big[\text{(I)+(I)+(III)+(IV)}\big],\]
where $\EE\left[\text{(I)+(I)+(III)+(IV)}\right] \lesssim_{\alpha,d,M,l} n^{-(1-b)/(4(3+b))}\sigma^{-a/2}$. Hence,
\[
\rho\big(\law\left(\|\GG_n^*\|_{\cF_\sigma}\middle|\,X_{1:n}\right), \law\left(\|G_P\|_{\cF_\sigma}\right)\big) = O_\PP\big(n^{-(1-b)/(4(3+b))}\sigma^{-a/2}\big),
\]
completing the proof.
\qed

\section{Supplementary Proofs for Section \ref{sec: low dim case}}
\label{sec: low dim case proofs}

\subsection{Supplementary Proofs for Theorem \ref{thm: low dim rate for exp}}

\begin{proof}[Proof of Lemma~\ref{lem: metric entropy small ball}]
The proof is identical to that of Theorem 2.7.1 in \citet{VaWe1996}, except for the fact that the $\epsilon^{1/\alpha}$-net of $B(\epsilon^{1/\alpha})$ is just one point: namely the origin. The details are omitted for brevity.
\end{proof}

\begin{proof}[Proof of Lemma~\ref{lem: non integrable l2 entropy}]

Without loss of generality let $\|F\|_{L^2(P)} = 1$.
Choose integers $q_0 = 0, q_2$ such that $2^{-q_2-2} < \gamma < 2^{-q_2-1}$. Construct a nested sequence of partitions $\cF = \cup_{i=1}^{N_q} \cF_{qi}$ such that for $q=q_0=0$, we have the entire function class as our partition, and for each integer $q \geq q_0$, $\cF_{qi}$ satisfies
\[\Big\|\sup_{f,g \in \cF_{qi}} |f - g|^*\Big\|_{L^2(P)} < 2^{-q}\|F\|_{L^2(P)},\]
where $\cF_{01}$ consists of the entire function class. $N_q$ can be chosen to satisfy 
\[\log N_q \leq \sum_{r=1}^{q} \log N_{[\,]}\big(2^{-q}\|F\|_{L^2(P)}, \cF, \|\cdot\|_{L^2(P)}\big).\]

Fix a function $f_{qi}$ in each partition $\cF_{qi}$, with $f_{01} = 0$, and let
\[\pi_q f = f_{qi},\ \ \text{for } f \in \cF_{qi},\]
\[\triangle_q f = \sup_{f,g \in \cF_{qi}} |f - g|^*,\ \ \text{for } f \in \cF_{qi}.\]
Define the following quantities:
\begin{align*}
    a_q &= \frac{2^{-q}}{\sqrt{1 + \log N_{q+1}}},\\
    A_{q-1}f &= \ind\{\triangle_{q_0}f \leq \sqrt{n}a_{q_0},\dots,\triangle_{q-1}f \leq \sqrt{n}a_{q-1}\},\\
    B_q f &= \ind\{\triangle_{q_0}f \leq \sqrt{n}a_{q_0},\dots,\triangle_{q-1}f \leq \sqrt{n}a_{q-1}, \triangle_{q-1}f > \sqrt{n}a_{q-1},\}\\
    B_{q_0} f &= \ind\{\triangle_{q_0} f > \sqrt{n} a_{q_0}\}.
\end{align*}

The following decomposition then holds for each $f \in \cF$
(see \citet[Page 241]{VaWe1996}).
\begin{equation}
    \begin{split}
         (f - \pi_{q_0}f)&B_{q_0}f\ +\ \sum_{q_0 + 1}^{q_2} (f - \pi_q f) B_q f+\sum_{q_0 + 1}^{q_2} (\pi_q f - \pi_{q-1} f) A_{q-1} f +\ (f - \pi_{q_2}f)A_{q_2}f.
    \end{split}
\end{equation}

In our case $\pi_{q_0} f = 0$ for each $f \in \cF$. Applying $\GG_n$ to both sides and taking supremum over the function class, we get:

\begin{equation}
\label{eq:emp_process_decomp_nonintegrable}
    \begin{split}
        \EE\left[\|\GG_n\|_\cF\right] \leq \EE&\left[\sup_{f \in \cF}\ 
        \GG_n f B_{q_0}f\right]\ +\ \sum_{q=1}^{q_2} \EE\left[\sup_{f \in \cF} \GG_n(f - \pi_q f) B_q f\right]\\
        &\ +\ \sum_{q=1}^{q_2} \EE\left[\sup_{f \in \cF} \GG_n(\pi_q f - \pi_{q-1} f) A_{q-1} f\right] +\ \EE\left[\sup_{f \in \cF} \GG_n(f - \pi_{q_2}f)A_{q_2}f\right].
    \end{split}
\end{equation}
The first term in the display above can simply be bounded as follows:
\[
\EE\left[\sup_{f \in \cF} \GG_n f B_{q_0}f\right] \leq 2\sqrt{n}PF\ind_{\{2F > \sqrt{n}a_{q_0}\}}.
\]
Treatment of the second and third terms in \eqref{eq:emp_process_decomp_nonintegrable} is identical to that in Lemma 2.14.3 in \citet{VaWe1996}. 
For the final term, note that
\[
\GG_n(f - \pi_{q_2}f)A_{q_2}f \leq \sqrt{n}P_n \triangle_{q_2}f A_{q_2}f + \sqrt{n}P \triangle_{q_2}f A_{q_2}f \leq 2\sqrt{n}a_{q_2} \leq 8 \gamma\sqrt{n}.
\]
\end{proof}

\subsection{Proof of Lemma~\ref{lem: affine support entropy bound}}
Recall that $P$ is supported on the hyperplane $H_s = \{x_0 + x : x \in V_s\}$, where $V_s$ is an $s$-dimensional subspace of $\RR^d$. Suppose $K_\epsilon$ is large enough so that $P(\|X - x_0\| \geq K_\epsilon) < \epsilon^{s^2/(2(s - 2))}$. Then,  $B(x_0, K_\epsilon) \cap H_s$ can be covered by $\big(1+K_\epsilon/\epsilon\big)^s \lesssim_{s,d} \big(K_\epsilon/\epsilon\big)^s$ many $\epsilon$-balls. 

Let $Z = \|X - x_0\|$. Then, $\|Z\|_{\psi_\beta} = A < \infty$ by the assumptions of the lemma. Hence, we have $\PP(Z \geq Ax) \leq C_\beta e^{-x^\beta}$. Consequently, we may choose $K_\epsilon \lesssim_{s,A,\beta} \log(1/\epsilon)^{1/\beta}$ and obtain 
\[
N_\epsilon(P, \epsilon^{s^2/(2(s-2))}) \lesssim_{s, A, \beta} \epsilon^{-s} \log(1/\epsilon)^{1/\beta}.
\]
This satisfies condition \eqref{eq: low dim entropy condition}, and also gives
\[
\limsup_{\epsilon\to0} \frac{\log N_\epsilon(P, \epsilon^{s^2/(s-2)})}{-\log \epsilon} \leq s,
\]
which implies $\dim^*_{\mathsf{SW}_1}(P) \leq s$.

\subsection{Proof of Corollary \ref{cor: sharp wass rate low dim}} Note that $\tilde{s} := \big(\dim_{\mathsf{SW1}}^*(P) + s\big)/2 > \dim_{\mathsf{SW1}}^*(P)$. Since $\tilde{s}$ depends only on $s$ and $P$, by Theorem \ref{thm: low dim rate for exp}, we have
\[
\EE\left[\sqrt{n}\gwassemp\right] \lesssim_{d,s,P} \sigma^{-\tilde{s}/2+1}(\log n)^{3/2}.
\]
Now,
\begin{align*}
    \EE\left[\wass(P_n,P)\right] &\leq \inf_{\sigma > 0} \Bigg\{\EE\left[\gwassemp\right] + 2\sigma\sqrt{d}\Bigg\}\\
    &\lesssim_{d,s,P} \inf_{\sigma > 0} \Bigg\{\sigma^{-\tilde{s}/2+1}(\log n)^{3/2}/\sqrt{n} + 2\sigma\sqrt{d}\Bigg\}\\
    &\lesssim_{d,s,P} \left(\frac{(\log n)^3}{n}\right)^{1/\tilde{s}}\\
    &\lesssim_{d,s,P} n^{-1/s}.
\end{align*}


\section{Additional Material and Proofs for Section \ref{sec: applications}}
\subsection{Technical Tools and Results on Minimum Smooth Wasserstein Estimation}\label{subsec:MDE_appen}
\label{subsec: mde}

This section contains the results on M-SWE that previously appeared in \citet{Goldfeld2020limit_wass}. Proofs of these result are similar to the ones on M-ESWE included in this paper, and hence omitted. See Appendix B of \citet{Goldfeld2020limit_wass} for details.

\medskip
As in Section \ref{subsec: mede}, where M-ESWE was treated, we assume that $P,Q_\theta\in\cP_1(\RR^d)$, for all $\theta\in\Theta$, and that $\Theta \subset \R^{d_{0}}$ is compact with nonempty interior. 
Before stating results on M-SWE, we reproduce here a technical lemma concerning lower-semicontinuity of $\gwass$ that originally appeared in Appendix B, \citet{Goldfeld2020limit_wass}, and is required for the proofs of results on M-SWE and M-ESWE.

\begin{lemma}[Continuity of $\gwass$]\label{LEMMA:MSWD_lsc}
The smooth Wasserstein distance $\gwass$ is lower semicontinuous (l.s.c.) relative to the weak convergence on $\cP(\RR^d)$ and continuous in $\wass$. Explicitly, (i) if $\mu_{k} \stackrel{w}{\to} \mu$ and $\nu_{k} \stackrel{w}{\to} \nu$,  then
\[
    \liminf_{k\to\infty}\gwass(\mu_k,\nu_{k})\geq\gwass(\mu,\nu); 
\]
and (ii) if $\wass(\mu_{k},\mu) \to 0$ and $\wass(\nu_{k},\nu) \to 0$,  then 
\begin{equation}
    \lim_{k\to\infty}\gwass(\mu_k,\nu_{k})=\gwass(\mu,\nu). 
\end{equation}
\end{lemma}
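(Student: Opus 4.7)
The plan is to reduce both claims to standard properties of the classical 1-Wasserstein distance together with two elementary facts about Gaussian convolution. Recall that $\gwass(\mu,\nu) = \wass(\mu \ast \Gauss, \nu \ast \Gauss)$, so the lemma will follow once we establish: (a) convolution with $\Gauss$ is continuous with respect to weak convergence; (b) $\wass$ is lower semicontinuous (l.s.c.) with respect to weak convergence on $\cP(\RR^d)$; and (c) convolution with $\Gauss$ is 1-Lipschitz in $\wass$, i.e., $\gwass(\mu,\mu') \le \wass(\mu,\mu')$ for all $\mu,\mu'$.

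For (a), if $\mu_k \stackrel{w}{\to} \mu$, then for any bounded continuous $f$, the function $y \mapsto \int f(x+y)\,\dd\mu_k(x)$ converges pointwise to $y \mapsto \int f(x+y)\,\dd\mu(x)$ and is uniformly bounded by $\|f\|_\infty$, so the dominated convergence theorem applied under integration against $\Gauss$ gives $\int f\,\dd(\mu_k \ast \Gauss) \to \int f\,\dd(\mu \ast \Gauss)$. Claim (b) is classical: given weakly convergent sequences $P_k \stackrel{w}{\to} P$, $Q_k \stackrel{w}{\to} Q$ and optimal couplings $\pi_k \in \Pi(P_k,Q_k)$, the sequence $\{\pi_k\}$ is tight (its marginals converge and are hence tight), so Prokhorov's theorem yields a subsequential weak limit $\pi \in \Pi(P,Q)$; the portmanteau theorem for the l.s.c. nonnegative cost $\|x-y\|$ then gives $\liminf_k \wass(P_k,Q_k) = \liminf_k \int \|x-y\|\,\dd\pi_k \ge \int \|x-y\|\,\dd\pi \ge \wass(P,Q)$. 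Claim (c) is immediate: if $(X,Y) \sim \pi$ is any coupling of $\mu$ and $\mu'$ and $Z \sim \Gauss$ is independent of $(X,Y)$, then $(X+Z, Y+Z)$ is a coupling of $\mu \ast \Gauss$ and $\mu' \ast \Gauss$ with expected cost $\E\|(X+Z) - (Y+Z)\| = \E\|X-Y\|$, so infimizing over $\pi$ gives $\gwass(\mu,\mu') \le \wass(\mu,\mu')$.

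Given (a)-(c), part (i) follows by combining (a) applied to each of $\mu_k, \nu_k$ with (b) applied to the pair $(\mu_k \ast \Gauss, \nu_k \ast \Gauss)$. For part (ii), since $\wass$-convergence implies weak convergence, part (i) delivers the $\liminf$ direction. The matching $\limsup$ direction is obtained from the triangle inequality for $\gwass$ combined with (c):
\[
\gwass(\mu_k,\nu_k) \le \gwass(\mu_k,\mu) + \gwass(\mu,\nu) + \gwass(\nu,\nu_k) \le \wass(\mu_k,\mu) + \gwass(\mu,\nu) + \wass(\nu,\nu_k),
\]
whose right-hand side converges to $\gwass(\mu,\nu)$ by hypothesis. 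There is no substantive obstacle in this strategy; the only mild subtlety is ensuring the tightness argument in (b) goes through when $\mu,\nu$ may fail to have finite first moment, in which case one interprets $\wass$ as an extended-real-valued functional on $\cP(\RR^d)$ and the l.s.c. inequality holds trivially if the $\liminf$ is infinite.
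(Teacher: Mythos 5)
Your proof is correct and somewhat more self-contained than the paper's. For part (i), you establish weak continuity of $\mu \mapsto \mu \ast \Gauss$ by dominated convergence on test functions, whereas the paper argues via characteristic functions (pointwise convergence of characteristic functions and the nonvanishing Gaussian characteristic function); both are standard. You also reprove the lower semicontinuity of $\wass$ via tightness of optimal couplings and the portmanteau lemma, while the paper simply cites Villani's Remark 6.10. The more genuine divergence is in part (ii): the paper invokes the topological equivalence of $\gwass$ and $\wass$ (Theorem 2 of \cite{Goldfeld2020GOT}) to conclude that $\wass(\mu_k,\mu)\to 0$ forces $\wass(\mu_k\ast\Gauss,\mu\ast\Gauss)\to 0$, then appeals to Villani's Corollary 6.9 for continuity of $\wass$ along $\wass$-convergent sequences. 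You instead derive the sharper, elementary contraction property $\gwass(\mu,\mu')\le\wass(\mu,\mu')$ via the coupling $(X+Z,Y+Z)$, and combine it with the triangle inequality for the upper bound and with your part (i) (after noting $\wass$-convergence implies weak convergence) for the lower bound. This buys you a shorter, citation-free argument that does not rely on the prior topological-equivalence result; the paper's route has the advantage of reusing machinery it has already established and emphasizing the structural fact that the two metrics induce the same topology. Your closing remark about interpreting $\wass$ as extended-real-valued when first moments fail is a reasonable caveat and handles a corner case the paper does not explicitly address.
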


We will also need a lemma specifying conditions for the existence of minimizers (cf. p. 73, \cite{santambrogio2015}).

\begin{lemma}[Weierstrass criterion for the existence of minimizers]
\label{lem: Weierstrass}
Suppose $\mathcal{X}$ is \\a compact metric space, and let $f: \mathcal{X} \to \R \cup \{ +\infty \}$ be l.s.c. (i.e., $\liminf_{x \to \overline{x}} f(x) \ge f(\overline{x})$ for any $\overline{x} \in \mathcal{X}$). Then, $\argmin_{x \in \mathcal{X}} f(x)$ is nonempty. 
\end{lemma}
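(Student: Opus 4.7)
The plan is to invoke the standard direct method: construct a minimizing sequence, extract a convergent subsequence using compactness, and then pass to the limit via lower semicontinuity. Since $(\mathcal{X}, d)$ is a compact metric space, it is sequentially compact, which is the key topological ingredient I will exploit.

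First, I would set $m := \inf_{x \in \mathcal{X}} f(x) \in [-\infty, +\infty]$. If $m = +\infty$ the conclusion is trivial since every point of $\mathcal{X}$ is a minimizer, so assume $m < +\infty$. By definition of infimum, there exists a sequence $\{ x_n \}_{n\in\NN} \subset \mathcal{X}$ with $f(x_n) \to m$. By sequential compactness of $\mathcal{X}$, there is a subsequence $\{ x_{n_k} \}_{k\in\NN}$ and a point $\overline{x} \in \mathcal{X}$ such that $x_{n_k} \to \overline{x}$ as $k \to \infty$.

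Next, I would apply the lower semicontinuity of $f$ at $\overline{x}$ along this subsequence to obtain
\[
f(\overline{x}) \le \liminf_{k \to \infty} f(x_{n_k}) = \lim_{k \to \infty} f(x_{n_k}) = m.
\]
Combined with the trivial bound $f(\overline{x}) \ge m$, this forces $f(\overline{x}) = m$, so $\overline{x} \in \argmin_{x \in \mathcal{X}} f(x)$; this also rules out the pathological case $m = -\infty$, since $f$ takes values in $\R \cup \{+\infty\}$ and hence $f(\overline{x}) > -\infty$. In particular, $m \in \R$ and the argmin set is nonempty.

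There is no real obstacle here: the argument is a textbook application of the direct method in the calculus of variations. The only point that requires a line of care is the reconciliation of the two common formulations of lower semicontinuity (the sequential one $\liminf_{n} f(x_n) \ge f(\overline{x})$ whenever $x_n \to \overline{x}$, and the topological one $\liminf_{x \to \overline{x}} f(x) \ge f(\overline{x})$), which coincide in metric spaces, so the statement as given applies directly to the chosen subsequence.
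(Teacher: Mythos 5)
Your proof is correct and is exactly the standard direct-method argument. The paper does not actually prove this lemma — it simply cites p.~3 of Santambrogio's book — and your argument is the textbook proof to which that citation refers, so you have faithfully reconstructed it rather than found a different route.
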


Next, we state results on measurability and consistency of M-SWE. The first result shows that $\hat{\theta}_n \in \argmin_{\theta \in \Theta} \gwass(P_n,Q_\theta)$ is measurable. 

\begin{theorem}[M-SWE measurability]
\label{thm:MSWD_measurable}
Assume that the map $\theta \mapsto Q_{\theta}$ is continuous relative to the weak topology, 
i.e., $Q_{\theta}\stackrel{w}{\to} Q_{\overline{\theta}}$ whenever $\theta \to \overline{\theta}$ in $\Theta$. Then, for every $n\in\NN$, there exists a measurable function $\omega \mapsto \hat{\theta}_{n} (\omega)$ such that $\hat{\theta}_{n}(\omega)\in\argmin_{\theta\in\Theta}\gwass\big(P_n(\omega),Q_\theta\big)$ for every $\omega \in \Omega$ (this also implies that $\argmin_{\theta\in\Theta}\gwass\big(P_n(\omega),Q_\theta\big)$ is nonempty).
\end{theorem}

The next result establishes consistency of the M-SWE. Its proof relies on \citet[Theorem 7.33]{rockafellar2009variational}, and can be found in \citet{Goldfeld2020limit_wass}. 

\begin{theorem}[M-SWE consistency]\label{thm:MSWD_inf_argmin} Assume that the map $\theta \mapsto Q_{\theta}$ is continuous relative to the weak topology.
Then the following hold:
\begin{enumerate}[(i)]
    \item $\infgwassn\to\infgwass$ a.s.;
    \item there exists an event with probability one on which the following holds: for any sequence $\{ \hat{\theta}_{n} \}_{n \in \mathbb{N}}$ of measurable estimators such that 
    \[
    \gwass (P_{n},Q_{\hat{\theta}_{n}}) \mspace{-2.5mu} \le \mspace{-2.5mu} \inf_{\theta \in \Theta}\gwass (P_n,Q_{\theta}) + o(1),
    \]
    the set of cluster points of $\{ \hat{\theta}_{n} \}_{n \in \mathbb{N}}$ is included in $\argmin_{\theta \in \Theta} \gwass (P,Q_{\theta})$;
    \item in particular, if $\argmin_{\theta \in \Theta} \gwass (P,Q_{\theta})$ is unique, i.e., $\argmin_{\theta \in \Theta} \gwass (P,Q_{\theta}) = \{ \theta^{\ast} \}$, then $\hat{\theta}_{n} \to \theta^{\ast}$ a.s.
\end{enumerate}
\end{theorem}

Following measurability and consistency of the M-SWE, the subsequent results specify the limit distributions of the M-SWE and the associated MSWD under norm differentiability assumptions \citep{pollard1980minimum}. Results are presented for the `well-specified' scenario, i.e., when $P=Q_{\theta^\ast}$ for some $\theta^\ast$ in the interior of $\Theta$. Some definitions are needed to make the connection to the setting of \citet{pollard1980minimum}. 
With any $Q\in\cP_1(\RR^d)$, associate the functional $Q^{\mspace{1mu}\sigma}:\mathsf{Lip}_{1,0}\to\RR$ defined by $Q^{\mspace{1mu}\sigma}(f):=Q(f\ast\gauss)=(Q\ast\Gauss)(f)$. Note that $\left\|Q^{\mspace{1mu}\sigma}\right\|_{\mathsf{Lip}_{1,0}}:=\sup_{f\in\mathsf{Lip}_{1,0}}\left|Q^{\mspace{1mu}\sigma}(f)\right|$ is finite as $Q\in\cP_1(\RR^d)$ and $|(f\ast\varphi_{\sigma})(x)| \le \|x\| + \sigma \sqrt{d}$ for any $f \in \mathsf{Lip}_{1,0}$. Consequently, $Q^{\mspace{1mu}\sigma}\in\ell^\infty(\mathsf{Lip}_{1,0})$ for any $Q\in\cP_1(\RR^d)$. Finally, observe that $\gwass(P,Q)=\lip{P^{\mspace{1mu}\sigma}-Q^{\mspace{1mu}\sigma}}$, for any $P,Q\in\cP_1(\RR^d)$ (cf. Section~\ref{SUBSEC:CLT for W1 proof}).

\medskip

The next result specifies the limit distribution of the (scaled) infimized SWD. This is central to deriving the limiting M-SWE distribution (cf. Theorem \ref{thm:MSWD_argmin_CLT} and Corollary \ref{cor:unique_solution} below). 

\begin{theorem}[M-SWE error limit distribution]\label{thm:MSWD_inf_CLT}
Let $P$ satisfy the conditions of Theorem \ref{thm: CLT for W1}. In addition, suppose that:
\begin{enumerate}[(i)]
    \item the map $\theta \mapsto Q_{\theta}$ is continuous relative to the weak topology;
    \item $P \ne Q_{\theta}$ for any $\theta \ne \theta^{\ast}$.
    \item there exists a vector-valued functional $D^{\mspace{1mu}\sigma}\mspace{-2mu}\in\mspace{-2mu} (\ell^\infty(\mathsf{Lip}_{1,0}))^{d_0}$ such that $\lip{Q^{\mspace{1mu}\sigma}_\theta\mspace{-2mu}-Q^{\mspace{1mu}\sigma}_{\theta^\ast}\mspace{-2mu}-\mspace{-2mu}\langle\theta-\theta^\ast\mspace{-3mu},\mspace{-2mu}D^{\mspace{1mu}\sigma}\rangle}\mspace{-2mu}$ $=o(\|\theta-\theta^\ast\|)$ as $\theta \to \theta^\ast$, where $\langle t,D^{\mspace{1mu}\sigma} \rangle:=\sum_{i=1}^{d_0} t_iD_i^{\mspace{1mu}\sigma}$ for $t\in\RR^{d_0}$;
    \item the derivative $D^{\mspace{1mu}\sigma}$ is \textit{nonsingular} in the sense that $\langle t,D^{\mspace{1mu}\sigma} \rangle\neq 0$, i.e., $\langle t,D^{\mspace{1mu}\sigma} \rangle\in\ell^\infty(\mathsf{Lip}_{1,0})$ is not the zero functional for all $0\neq t\in\RR^{d_0}$.
\end{enumerate}
Then, $\sqrt{n}\inf_{\theta\in\Theta}\gwass(P_n,Q_\theta) \stackrel{w}{\to} \inf_{t \in \RR^{d_0}}\lip{G_P^{\mspace{1mu}\sigma}-\ip{t,D^{\mspace{1mu}\sigma}}},$ where $G_{P}^{\mspace{1mu}\sigma}$ is the Gaussian process from Theorem \ref{thm: CLT for W1}. 
\end{theorem}


\begin{remark}[Primitive conditions for norm differentiability (iii)]
Suppose that the family $\{ Q_{\theta} \}_{\theta \in \Theta}$ is dominated by a common Borel measure $\nu$ on $\R^{d}$, and let $q_{\theta}$ denote the density of $Q_{\theta}$ with respect to $\nu$, i.e., $\dd Q_{\theta} = q_{\theta} \dd\nu$. Then, $Q_{\theta} \ast \Gauss$ has Lebesgue density $x \mapsto \int \varphi_{\sigma}(x-t) q_{\theta}(x) \dd \nu (t)$. Assume that $q_{\theta}$ admits the Taylor expansion $q_{\theta}(x) = q_{\theta^{\ast}} (x) + \dot{q}_{\theta^{\ast}}(x) \cdot (\theta - \theta^{\ast}) + r_{\theta}(x) \cdot (\theta - \theta^{\ast})$ with $r_{\theta}(x) = o(1)$ as $\theta \to \theta^{\ast}$. Then, one may verify that Condition (iii) holds with $D^{\mspace{1mu}\sigma}(f) = \int f(x)\int \varphi_{\sigma}(x-t) \dot{q}_{\theta^{\ast}}(t) \dd \nu (t) \dd x = \int (f \ast\varphi_{\sigma}) (t) \dot{q}_{\theta^{\ast}} (t) \dd\nu (t)$, for $f \in \mathsf{Lip}_{1,0}$, provided that $\int (1+\|t\|) \| \dot{q}_{\theta^{\ast}}(t) \| \dd \nu(t) < \infty$ and $\int (1+\| t \|) \| r_{\theta}(t) \| \dd\nu(t) = o(1)$ (use the fact that $|f(t)| \le \| t \|$, for any $f \in \mathsf{Lip}_{1,0}$).
\end{remark}

The second limit distribution result on M-SWE concerns convergence in distribution of the M-SWE. Optimally, the limit distribution of $\sqrt{n}(\hat{\theta}_{n}-\theta^\ast)$, for some measurable estimator $\hat{\theta}_{n} \in \argmin_{\theta \in \Theta} \gwass (P_{n},Q_{\theta})$, is the object of interest. However, a limit is guaranteed to exist only when the (convex) function $t \mapsto \lip{G_P^{\mspace{1mu}\sigma}-\ip{t,D^{\mspace{1mu}\sigma}}}$ has a unique minimum a.s. (see Corollary \ref{cor:unique_solution} below). To avoid this stringent assumption, 
the next result considers the set of approximate minimizers $\hat{\Theta}_n:=\big\{ \theta\in\Theta: \gwass(P_n,Q_\theta)\leq \inf_{\theta'\in\Theta}\gwass(P_n,Q_{\theta'})+\lambda_n/\sqrt{n} \big\}$ and establishes a convergence result in the space of compact convex sets with the Hausdorff topology.
To this end, let \[K(L,\beta):=\left\{t\in\RR^{d_0}: \lip{L-\ip{t,D^{\mspace{1mu}\sigma}}}\leq \inf_{t'\in\RR^d}\lip{L-\ip{t',D^{\mspace{1mu}\sigma}}}\mspace{-8mu}+\mspace{-2mu}\beta\right\}.\]
Lemma 7.1 of \citet{pollard1980minimum} ensures that for any $\beta\geq 0$, $L \mapsto K(L,\beta)$ is a measurable map~from $\ell^{\infty}(\mathsf{Lip}_{1,0})$ into  $\mathfrak{K}$ -- the class of all compact, convex, and nonempty subsets of $\RR^{d_0}$ -- endowed with the Hausdorff topology, i.e., the topology induced by the metric $d_\mathsf{H}(K_1,K_2):=\inf\big\{\delta>0:$ $K_1^{\delta}\supset K_2,\ K_2^{\delta}\supset K_1\big\}$, where $K^{\delta}$ is the $\delta$-fattening of $K$. 

\begin{theorem}[M-SWE limit distribution]\label{thm:MSWD_argmin_CLT}
Let the conditions of Theorem \ref{thm:MSWD_inf_CLT} hold.\\Then, there exists a sequence of nonnegative reals  $\beta_{n} \downarrow 0$ such that the following hold:
\begin{enumerate}[(i)]
    \item $\PP_*\big( \hat{\Theta}_n\subset \theta^\ast+n^{-1/2}K(\GG_n^{\mspace{1mu}\sigma},\beta_n\big)\big)\to 1$, where $\GG_n^{\mspace{1mu}\sigma}:=\sqrt{n}(P_n^{\mspace{1mu}\sigma}-P^{\mspace{1mu}\sigma})$ is the (smooth) empirical process and $\PP_*$ denotes inner probability.
\item $K(\GG_n^{\mspace{1mu}\sigma},\beta_{n} )\stackrel{w}{\to} K(G_P^{\mspace{1mu}\sigma},0)$ as $\mathfrak{K}$-valued random variables.
\end{enumerate}
\end{theorem}

Given Theorem \ref{thm:MSWD_inf_CLT}, the proof of Theorem \ref{thm:MSWD_argmin_CLT} follows by repeating of the argument from \citet[Section 7.2]{pollard1980minimum}. 
If $\argmin_{t \in \R^{d_{0}}} \lip{G_P^{\mspace{1mu}\sigma}-\ip{t,D^{\mspace{1mu}\sigma}}}$ is unique a.s. (a nontrivial assumption), then Theorem \ref{thm:MSWD_argmin_CLT} simplifies as~follows.\footnote{Note that $\argmin_{t \in \R^{d_{0}}} \lip{G_P^{\mspace{1mu}\sigma}-\ip{t,D^{\mspace{1mu}\sigma}}}\neq \emptyset$ provided that $D^{\mspace{1mu}\sigma}$ is nonsingular, since the latter guarantees that $\lip{G_P^{\mspace{1mu}\sigma}-\ip{t,D^{\mspace{1mu}\sigma}}} \to \infty$ as $\| t \| \to \infty$. }

\begin{corollary}[Simplified M-SWE limit distribution]
\label{cor:unique_solution}
Under the conditions of Theorem \ref{thm:MSWD_inf_CLT}, let $\{ \hat{\theta}_{n} \}_{n \in \mathbb{N}}$ be a sequence measurable estimators such that 
\[
\gwass (P_{n},Q_{\hat{\theta}_{n}})\mspace{-3mu}\le\mspace{-3mu}\inf_{\theta \in \Theta}\mspace{-2mu}\gwass (P_{n},Q_{\theta}) + o_{\PP}(n^{-\frac{1}{2}}).
\]
Then, provided that $\argmin_{t \in \R^{d_{0}}} \lip{G_P^{\mspace{1mu}\sigma}-\ip{t,D^{\mspace{1mu}\sigma}}}$ is unique a.s., we have $\sqrt{n}(\hat{\theta}_{n} - \theta^{\ast}) \stackrel{w}{\to} \argmin_{t \in \R^{d_{0}}}\lip{G_P^{\mspace{1mu}\sigma}-\ip{t,D^{\mspace{1mu}\sigma}}}$.
\end{corollary}

The last result of this section is a high probability generalization bound for generative modeling via M-SWE, in accordance to the framework from \citet{arora2017generalization,zhang2017discrimination}. The goal is to control the gap between the $\gwass$ loss attained by approximate, possibly suboptimal, empirical minimizers and the population loss $\inf_{\theta\in\Theta}\gwass (P,Q_\theta)$. Upper bounding this gap by the rate of empirical convergence, the concentration result from Proposition \ref{prop: concentration} together with the triangle inequality imply the following.

\begin{corollary}[M-SWE generalization error bound]
\label{cor:M-SWE-generalization-error}
Let $P$ be compactly supported, and suppose that $\hat{\theta}_n$ is an estimator such that $\gwass(P_n,Q_{\hat{\theta}_n}) \leq \inf_{\theta \in \Theta} \gwass(P_n,Q_\theta) + \epsilon$, for some $\epsilon>0$. We have
\[
\PP\left(\gwass(P,Q_{\hat{\theta}_n}) -\inf_{\theta \in \Theta}\gwass(P,Q_\theta) > \epsilon +t \right) \leq C e^{-c nt^2},
\]
for constants $C,c $ independent of $n$ and $t$.
\end{corollary}

\subsection{Proof of Theorem~\ref{thm:M_ESWE_measurable}}
\label{subsec:M_ESWE_measurable_proof}
We will need an extension of Lemma~\ref{LEMMA:MSWD_lsc} to the expected SWD.
\begin{lemma}[Continuity of ESWD]
\label{lem:eswd_continuity}
The expected SWD map $(P,Q) \mapsto \EE[\gwass(P,Q_m)]$ is lower semicontinuous (l.s.c.) relative to the weak convergence on $\cP(\RR^d)$ and continuous in $\wass$. Explicitly, (i) if $\mu_{(k)} \stackrel{w}{\to} \mu$ and $\nu_{(k)} \stackrel{w}{\to} \nu$,  then
\[
    \liminf_{k\to\infty}\EE[\gwass(\mu_{(k)},\nu_{(k),m})]\geq\EE[\gwass(\mu,\nu_m)],
\]
where $\nu_{(k),m}, \nu_m$ are empirical measures constructed from $m$ i.i.d. samples from $\nu_{(k)}$ and $\nu$ respectively;
and (ii) if $\wass(\mu_{(k)},\mu) \to 0$ and $\wass(\nu_{(k)},\nu) \to 0$,  then 
\begin{equation}
    \lim_{k\to\infty}\EE[\gwass(\mu_{(k)},\nu_{(k),m})]=\EE[\gwass(\mu,\nu_m)]. 
\end{equation}
\end{lemma}

\begin{proof}[Proof of Lemma~\ref{lem:eswd_continuity}]
Since $Q_{(k)} \overset{w}{\to} Q$, we also have convergence of the corresponding m-fold product measures, i.e., $Q_{(k)}^m \overset{w}{\to} Q^m$. Then, by Skorohod representation theorem, there exist random variables $Y_{k1}, \dots, Y_{km} \overset{iid}{\sim} Q_{(k)}$ and $Y_1, \dots, Y_m \overset{iid}{\sim} Q$ so that $(Y_{k1}, \dots, Y_{km}) \to (Y_{1}, \dots, Y_{m})$ $\PP$-a.s.. This further implies that $Q_{(k)m} = \frac{1}{m} \sum_{i=1}^m \delta_{Y_{ki}} \overset{w}{\to} Q_m = \frac{1}{m} \sum_{i=1}^m \delta_{Y_i}$ $\PP$-a.s..  By Lemma~\ref{LEMMA:MSWD_lsc}, this implies 
\[
\liminf_{k \to \infty} \gwass(P_{(k)}, Q_{(k)m}) \overset{w}{\to} \gwass(P, Q)\ \ \PP-a.s..
\]
Taking expectation on both sides and applying Fatou's lemma, we have the first statement.
For (ii), note that if $W_1(Q_{(k)}, Q) \to 0$, there exists a coupling of $(Y_k, Y)$ of $Q_{(k)}$ and $Q$ so that $\EE[|Y_{k}  - Y|] \to 0$. Hence, we can again define $Y_{k1}, \dots, Y_{km} \sim Q_{(k)}$ i.i.d. and $Y_1, \dots, Y_m \sim Q$ i.i.d.., so that $\EE[\wass(Q_{(k)m}, Q_m)] \to 0$ as $k \to \infty$, where $Q_{(k)m}, Q_m$ are defined as earlier. Further, by triangle inequality,
\[
\big | \gwass(P_{(k)}, Q_{(k)m}) - \gwass(P, Q_m) \big | \leq \gwass(P_{(k)}, P) + \gwass(Q_{(k)m}, Q_m).
\]
Taking expectation on both sides and letting $k \to \infty$, we have the second statement.
\end{proof}

\begin{proof}[Proof of Theorem~\ref{thm:M_ESWE_measurable}]
By Lemma~\ref{lem: Weierstrass}, since $\Theta$ is compact with non-empty interior and the map the map $\theta \mapsto \EE \big [\gwass (P_n,Q_{\theta}) \,|\, X_{1:n} \big ](\omega)$ is lower-semicontinuous (Lemma~\ref{lem:eswd_continuity}), $\argmin_{\theta \in \Theta} \gwass (P_n(\omega),Q_{\theta})$ is non-empty.
As in the proof of Theorem~\ref{thm:MSWD_measurable}, it suffices to prove that the joint map $(x,\theta) \mapsto \EE_{Y_{1:m}} \big [\gwass(P_n(x), Q_{\theta,m} \big ]$ is continuous. Now, for fixed $x$ the above map is lower-semicontinuous in $\theta$ by Lemma~\ref{lem:eswd_continuity}, and hence measurable in $\theta$. For fixed $\theta$, taking $x_k \to x$, we have that $P_n(x_k) \to P_n(x)$ in $\wass$, so that by statement (ii) of Lemma~\ref{lem:eswd_continuity}, the above map is continuous in $x$. This shows that the above map is jointly measurable by Lemma 4.5.1 in \citet{aliprantis2006}, which completes the proof. 
\end{proof}

\subsection{Proof of Theorem~\ref{thm:M_ESWE_consistency}}
\label{subsec:M_ESWE_consistency_proof}

\noindent\underline{Proof of (i)}: As in the proof of Theorem~\ref{thm:MSWD_inf_argmin} (cf. Appendix B.3, \citet{Goldfeld2020limit_wass}), we will apply Theorem 7.31 of \citet{rockafellar2009variational} to prove the convergence of infimums. Define:
\[
\GG_n(\theta) = \GG_n(\theta, \omega) := \begin{cases}
\EE[\gwass(P_n, Q_{\theta, m}) | X_{1:n} ] (\omega), & \theta \in \Theta,\\
+\infty, &\ \theta \in \RR^{d_0} \setminus \Theta;
\end{cases}
\]
\[
g(\theta) = \begin{cases}
\gwass(P,Q_\theta), &\ \theta \in  \Theta,\\
+\infty, &\ \theta \in \RR^{d_0} \setminus \Theta.
\end{cases}
\]
For every fixed $\omega$ in a $\PP$-a.s. event, $\GG_n(., \omega)$ and $g$ are lower-semicontinuous in $\theta$, level bounded and proper. Hence, by Theorem 7.31 in \citet{rockafellar2009variational}, it only remains to show that $\GG_n$ epi-converges to $g$, i.e., it remains to verify that
\begin{enumerate}[(a)]
    \item $\limsup_n \inf_{\theta \in \cU} \GG_n(\theta) \leq \inf_{\theta \in \cU} g(\theta)$ for every open subset $\cU \subset \Theta$.
    \item $\liminf_n \inf_{\theta \in \cK} \GG_n(\theta) \geq \inf_{\theta \in \cK} g(\theta)$ for every compact subset $\cK \subset \Theta$.
\end{enumerate}
For (a), first note that if $\cU \cap \Theta = \emptyset$, both sides reduce to $+\infty$, and thus the ineqality trivially holds. Otherwise, choose a convergent sequence $\theta_n \in \cU$ (possible since $\Theta$ is compact: note that the limit $\theta^*$ may not  may not be in $\cU$) so that $\gwass(P,Q_{\theta_n}) \to \inf_{\theta \in \cU} \gwass(P, Q_{\theta})$. Then, 
\[\EE \big [ \gwass(P_n, Q_{\theta_n, m}) \,|\, X_{1:n} \big ] \leq \gwass(P_n, P) + \gwass(P, Q_{\theta_n}) + \EE \big [ \gwass(Q_{\theta_n}, Q_{\theta_n, m}) \,|\, X_{1:n} \big ].\]
The right hand side of the above display converges $\PP$-a.s. to $\gwass(P,Q_\theta^*)$. The statement (a) then follows by taking $\limsup_{n \to \infty}$ on both sides.

For (b), note that if $\cK \cap \Theta = \emptyset$, the statement again follows. Otherwise, we can assume $\cK \subset \Theta$ w.l.o.g.. Choose $\theta_n(\omega) \in \argmin_{\theta\in \Theta} \EE[\gwass(P_n, Q_{\theta, m}) | X_{1:n}]$. Then,
\[ \liminf_n \inf_{\theta \in \Theta} \EE \big [ \gwass(P_n, Q_{\theta, n}) \,|\, X_{1:n} \big ] = \EE \big [ \gwass(\theta_n, Q_{\theta_n, m}) \,|\, X_{1:n} \big ]. \]
For every subsequential limit of $\EE [ \gwass(\theta_n, Q_{\theta, n}) | X_{1:n} ]$, there exists a further subsequence $n_{k_l}$ such that $\theta_{n_{k_l}} \to \theta^* \in \cK$, so that $Q_{\theta_{n_{k_l}}} \overset{w}{\to} Q_{\theta^*}$. This implies that the limit of the subsequence $\lim_k \EE [ \gwass(\theta_{n_k}, Q_{\theta_{n_k}, m}) | X_{1:n_k} ] = \lim_l \EE [ \gwass(\theta_{n_{k_l}}, Q_{\theta_{n_{k_l}}, m}) | X_{1:n_{k_l}} ]$ can be bounded as
\begin{align*}
\lim_l \EE \big [ \gwass(P_{n
_{k_l}}, Q_{\theta_{n_{k_l}}, m}) \,|\, X_{1:n} \big ] &\geq \liminf_l \left ( \gwass (P_{n_{k_l}}, Q_{\theta_{n_{k_l}}}) - \EE \big [ \gwass(Q_{\theta_{n_{k_l}}}, Q_{\theta_{n_{k_l}}}) \big ] \right ) \\
&\geq \gwass(P, Q_{\theta^*}),
\end{align*}
where the last inequality follows from Lemma~\ref{LEMMA:MSWD_lsc}. Since $\gwass(P, Q_{\theta^*}) \geq \inf_{\theta \in \cK} \gwass(P, Q_\theta)$, the result follows.

\noindent\underline{Proof of (ii):} By Fatou's lemma, for any such cluster point $\theta^* = \lim_k \hat\theta_{n_k}$,
\[
\liminf_n \EE \big [ \gwass(P_n, Q_{m, \hat\theta_n}) \,|\, X_{1:n} \big ] \leq \EE \big [ \lim_k \gwass(P_{n_k}, Q_{m(n_k), \hat\theta_{n_k}}) \,|\, X_{1:n} \big ] \mspace{-2mu} = \mspace{-2mu} \gwass(P, Q_{\theta^*})\ \PP-a.s..
\]
However, the left most side of the above display is equal to $\inf_{\theta \in \Theta} \gwass(P, Q_\theta)$ by (i), and hence, (ii) follows.

\noindent\underline{Proof of (iii):} This directly folows from (ii).
\hfill\qed

\subsection{Proof of Theorem~\ref{thm:MESWD_inf_CLT}}
\label{subsec:MESWD_proof}
Consider $\tilde\theta_n \in \argmin_{\theta \in \Theta} \EE \big [ \gwass(P_n, Q_{\theta, m}) \,|\, X_{1:n} \big ]$ and $\hat\theta_n \in \argmin_{\theta\in\Theta} \gwass(P_n, Q_\theta)$. We will show that
\be\label{eq:inf_MESWD_approx}
\EE \big [ \gwass(P_n, Q_{\tilde\theta_n,m}) \,|\, X_{1:n} \big ] \leq \inf_{\theta \in \Theta} \gwass(P_n, Q_{\theta}) + o_{\PP}\left(\frac{1}{\sqrt{n}}\right).
\ee
The result will then follow by Theorem~\ref{thm:MSWD_inf_CLT} combined with Slutsky's theorem.

We have
\begin{align*}
    \gwass(P_n, Q_{\tilde\theta_n}) &\leq \inf_{\theta\in\Theta} \EE \big [ \gwass(P_n, Q_{\theta, m}) \,|\, X_{1:n} \big ] + \EE \big [ \gwass(Q_{\tilde\theta_n}, Q_{\tilde\theta_n, m} \,|\, X_{1:n} \big ]\\
    &\leq \inf_{\theta \in \Theta} \Big [ \gwass(P_n, Q_\theta) + \EE \big [ \gwass(Q_\theta, Q_{\theta, m}) \big ] \Big ] + \EE \big [ \gwass(Q_{\tilde\theta_n}, Q_{\tilde\theta_n, m}) \,|\, X_{1:n} \big ]\\
    &\leq \inf_{\theta \in \Theta} \gwass(P_n, Q_\theta) + \EE \big [ \gwass(Q_{\hat\theta_n}, Q_{\hat\theta_n, m}) \,|\, X_{1:n} \big ] + \EE \big [ \gwass(Q_{\tilde\theta_n}, Q_{\tilde\theta_n, m}) \,|\, X_{1:n} \big ].
\end{align*}
Under the assumption that $\sup_\theta \int |x|^q\,\dd Q_\theta < \infty$ for $q > 2(d+1)$, the last two terms are $O_\PP(1/\sqrt{m_n})$ by \eqref{EQ:expected_bound} (cf. Remark~\ref{rem:moment_condition}). Since $m_n \gg n$, the result follows.
\hfill\qed

\subsection{Proof of Theorem~\ref{thm:M_ESWE_limit}}
\label{subsec:M_ESWE_limit_proof}
Under the given conditions, \eqref{eq:inf_MESWD_approx} holds as in the proof of Theorem~\ref{thm:MESWD_inf_CLT}, so that any estimator $\hat\theta_n$ satisfying the conditions of Theorem~\ref{thm:M_ESWE_limit} also satisfies the conditions of Corollary~\ref{cor:unique_solution}. This completes the proof.
\hfill\qed

\end{appendix}

\bibliographystyle{plainnat}
\bibliography{ref_combined}

\end{document}